\theoremstyle{plain} \newtheorem{theorem}{Theorem}[section]
\theoremstyle{plain} \newtheorem{proposition}[theorem]{Proposition}
\theoremstyle{plain} \newtheorem{lemma}[theorem]{Lemma}
\theoremstyle{plain} \newtheorem{corollary}[theorem]{Corollary}
\theoremstyle{plain} \newtheorem{conjecture}[theorem]{Conjecture}
\theoremstyle{definition} \newtheorem{definition}[theorem]{Definition}
\theoremstyle{remark} \newtheorem{remark}[theorem]{Remark}
\theoremstyle{remark} \newtheorem{example}[theorem]{Example}
\theoremstyle{remark} \newtheorem{fact}[theorem]{Fact}
\theoremstyle{remark} \newtheorem{warning}[theorem]{WARNING}
\theoremstyle{remark} 
\newcommand{\hyperbox}{\mathcal{H}}
\newcommand{\C}{\mathcal{C}}
\newcommand{\F}{\mathcal{F}}
\newcommand{\A}{\mathfrak{A}}
\newcommand{\hfbold}{\mathbf{HF}}
\newcommand{\incl}{\mathcal{I}}
\newcommand{\proj}{\mathcal{P}}
\newcommand{\destab}{\mathcal{D}}
\newcommand{\Link}{\mathcal{L}}
\newcommand{\Mink}{\mathcal{M}}
\DeclareMathOperator{\rk}{rk}
\newcommand{\boldalpha}{\mbox{\boldmath $\alpha$}}
\newcommand{\boldbeta}{\mbox{\boldmath $\beta$}}
\title{Framed Floer Homology}
\date{}
\author{Tye Lidman}
\begin{document}
\maketitle

\abstract{For any three-manifold presented as surgery on a framed link $(L,\Lambda)$ in an integral homology sphere $Y$, Manolescu and Ozsv\'ath construct a hypercube of chain complexes whose homology calculates the Heegaard Floer homology of $Y_\Lambda(L)$.  This carries a natural filtration that exists on any hypercube of chain complexes; we study the $E_2$ page of the associated spectral sequence, called Framed Floer homology.  One purpose of this paper is to show that Framed Floer homology is an invariant of oriented framed links, but not an invariant of the surgered manifold.  We discuss how this relates to an attempt at a combinatorial proof of the invariance of Heegaard Floer homology.  We also show that Framed Floer homology satisfies a surgery exact triangle analogous to that of Heegaard Floer homology.  This setup leads to a completely combinatorial construction of a surgery exact triangle in Heegaard Floer homology.  Finally, we study the Framed Floer homology of two-component links which surger to $S^2 \times S^1 \# S^2 \times S^1$ and have Property 2R.}


\section{Introduction}
Spectral sequences have become a pervasive tool in the study of invariants of various low-dimensional objects in topology and their interrelations.  A prime example of this is the link surgery spectral sequence constructed by Ozsv\'ath and Szab\'o in \cite{hfbranched}.  This has the reduced Khovanov homology of a link $L$ as its $E_2$ term and converges to $\widehat{HF}(\Sigma(\bar{L}))$, the Heegaard Floer homology of the double cover of $S^3$ branched over the mirror of $L$; the methods in their paper have been propagated to many additional results.

Their idea is to present $\widehat{HF}(\Sigma(\bar{L}))$ as the iterated mapping cone of maps induced by four-dimensional surgery cobordisms.  This essentially produces a hypercube of chain complexes in the sense of \cite{hflz}.  Therefore, one may induce a filtration on this complex as in \cite{hfbranched}; this is a filtration determined by the position in the hypercube (often called the height in Khovanov homology \cite{drorkhovanov}) which we call the $\varepsilon$-filtration.  The corresponding spectral sequence is thus the desired one to study.  Ozsv\'ath and Szab\'o prove that the complex $(E_1,d_1)$ is in fact the reduced Khovanov chain complex for $L$.

In another direction, given a framed, oriented link, $(\vec{L},\Lambda)$, in an integral homology sphere $Y$, Manolescu and Ozsv\'ath construct the link surgery formula (not to be confused with the link surgery spectral sequence).  The surgery formula is a chain complex, $\C^\circ(\hyperbox,\Lambda)$, where $\hyperbox$ is a complete system (a collection of Heegaard data constructed from $L$ (Section~\ref{reviewsection}), whose homology is isomorphic to a completed version of the mod 2 Heegaard Floer homology of $\Lambda$-surgery on $L$, $\mathbf{HF}^\circ(Y_{\Lambda}(L))$, for each flavor $\circ$ \cite{hflz}.  Here $\Lambda$ is determined by a collection of integral surgery coefficients for the components of $L$.  A flavor of Heegaard Floer homology refers to one of the many versions of the theory, $\circ = +,-,{ }_{\widehat{\quad}}, \infty$; this essentially amounts to a choice of base ring for the chain complex.  

The link surgery formula is presented as a hypercube of chain complexes, which in some sense is also created by an iterated mapping cone construction.  This generalizes the integer surgeries formula for knots of Ozsv\'ath-Szab\'o \cite{hfkz}.  Since we will work with the Manolescu-Ozsv\'ath theory, all Floer homology coefficients will be calculated mod 2 and all $U$ variables will be formally completed; in other words, we work over $\mathbb{F}[[U]]$ instead of $\mathbb{Z}[U]$, where $\mathbb{F} = \mathbb{Z}/2\mathbb{Z}$.  We therefore use their notation of $\mathbf{HF}^\circ$ for the mod 2, $U$-completed groups.  It is useful to note that $\widehat{HF} \cong \widehat{\mathbf{HF}}$.

Since any hypercube of chain complexes is naturally equipped with the $\varepsilon$-filtration, it makes sense to study the corresponding spectral sequence on the link surgery formula complex.  For example, this spectral sequence is used to compute $HF^\infty(Y,\mathfrak{s}_0;\mathbb{F})$ for torsion Spin$^c$ structures $\mathfrak{s}_0$ in \cite{hftriple}.  The goal of this paper is to study the $E_2$ term of this spectral sequence in the context of framed, oriented links $(\vec{L},\Lambda)$, which we call Framed Floer homology.  Framed Floer homology will always refer to the hat-flavor, and thus will be denoted by $\widehat{FFH}(\hyperbox,\Lambda)$, where again $\hyperbox$ is the Heegaard data used to construct the link surgery formula.  Like Heegaard Floer homology, $\widehat{FFH}$ decomposes by Spin$^c$ structures on $Y_{\Lambda}(L)$.  

\begin{remark}
One can also define versions of Framed Floer homology for the other flavors; in the case of $+$ and $-$, this does not give much insight into understanding the corresponding Heegaard Floer homology.  This is because the $E_\infty$ page of the $\varepsilon$-spectral sequence is not necessarily isomorphic to the homology of the link surgery formula if we are not working over a field.  On the other hand, we expect $FFH^\infty$ to be completely determined by $H_1(Y_\Lambda(L))$ and thus do not focus on it.  This is why we restrict our attention to $\widehat{FFH}$.  
\end{remark} 

Framed Floer homology will later be rephrased as the homology of a complex whose chain groups correspond to the Heegaard Floer homology of large surgeries on sublinks of $L$ and whose differential is calculated by induced maps coming from two-handle cobordisms between the surgered manifolds.

\subsection{Framed Floer Homology and Invariance}
The first natural question is to what level of invariance Framed Floer homology satisfies.  The main goal is to address this.  

\begin{theorem}\label{invariancetheorem} The stable isomorphism-type of Framed Floer homology, $\widehat{FFH}(\hyperbox,\Lambda,\mathfrak{s})$, is an invariant of an oriented, framed link $(\vec{L},\Lambda)$ in $Y$ and a choice of Spin$^c$ structure on $Y_\Lambda(L)$.  
\end{theorem}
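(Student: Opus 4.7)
The plan is to prove invariance by tracking the $\varepsilon$-filtered structure through the Manolescu--Ozsv\'ath invariance argument for the link surgery formula. Two complete systems $\hyperbox, \hyperbox'$ for $(\vec{L},\Lambda)$ differ by a finite sequence of elementary moves: Heegaard isotopies and handleslides on each sublink's multi-pointed diagram, index-one/two stabilizations, and moves that modify auxiliary curve systems and polygon data. For each such move, \cite{hflz} produces a chain map $\widehat{\C}(\hyperbox,\Lambda) \to \widehat{\C}(\hyperbox',\Lambda)$ inducing an isomorphism on homology; the first task is to check that these maps are filtered with respect to $\varepsilon$, and the second is to identify the obstructions to each one being a \emph{filtered} quasi-isomorphism, which is strictly stronger than being a plain quasi-isomorphism.

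First I would reduce to a short list of elementary moves and verify in each case that the transition map can be built from polygon counts that do not decrease the hypercube position, hence are $\varepsilon$-filtered. Moves that leave the combinatorial shape of the hypercube fixed (isotopies, handleslides away from basepoints, and the small stabilizations that do not alter the vertex set) induce strictly $\varepsilon$-preserving chain maps, so on $E_1$ they restrict to the familiar Heegaard Floer equivalences at each vertex together with a chain homotopy of the two-handle edge maps. Moves that enlarge the underlying hyperbox append acyclic summands concentrated at a single $\varepsilon$-degree; this phenomenon is precisely what forces the statement to be about \emph{stable} rather than strict isomorphism, and the relevant notion of stabilization is direct sum with such a filtered-acyclic piece. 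Once these transition maps are in place, the intrinsic description of the $E_1$ page as $\bigoplus_{M\subset L}\widehat{\hfbold}$ of a large surgery on the sublink $M$, with $d_1$ assembling two-handle cobordism maps between the large-surgery manifolds, becomes manifest, and the ordinary invariance of Heegaard Floer homology together with the invariance of two-handle cobordism maps yields the invariance of $E_2 = \widehat{FFH}$ up to the stabilization ambiguity. The Spin$^c$ refinement is automatic because every map used in the construction respects the decomposition of $\widehat{\C}$ by Spin$^c$ structures on $Y_\Lambda(L)$.

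The hard part will be the bookkeeping required to separate a filtered quasi-isomorphism from a filtered-acyclic summand in each elementary move, because the original Manolescu--Ozsv\'ath argument uses chain homotopies whose $\varepsilon$-degree is not controlled. For each move I would need to construct an explicit decomposition of the transition hyperbox into a filtered equivalence plus an acyclic piece whose contribution to $E_1$ is a direct sum of contractible two-term complexes. The filtered structure of the hyperboxes produced in \cite{hflz} should make this decomposition possible, but verifying it move by move, and in particular checking that the auxiliary stabilizations needed to compose transition maps between different complete systems combine cleanly into a single stabilization rather than an accumulating tower, is where most of the work will reside.
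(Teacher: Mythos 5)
Your overall route --- push the Manolescu--Ozsv\'ath complete-system moves through the $\varepsilon$-filtration, check each elementary move is filtered and a quasi-isomorphism at the vertex level, and conclude by spectral-sequence comparison --- is the same as the paper's, and that part of your plan is sound. But there are two genuine gaps. First, you have misidentified what ``stable'' means and where it comes from. In the paper, stable isomorphism is isomorphism up to \emph{tensoring} with copies of $H^*(S^1)$, and these factors arise from the moves that change the number of basepoints and colors (index $0/3$ link stabilizations and elementary coloring changes), not from enlargements of the underlying hyperbox: global shifts and elementary enlargements/contractions already give honest $\varepsilon$-filtered quasi-isomorphisms with no stabilization penalty. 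Your proposed mechanism --- direct sum with a filtered-acyclic summand --- cannot produce any ambiguity at all, since adding an acyclic summand does not change $E_2$; so the ``hard part'' you plan to spend your effort on (separating acyclic summands move by move) is not where the difficulty lies, and it would not explain the $H^*(S^1)$ factors that genuinely appear.

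Second, your proposal works directly with $\widehat{\C}$ and never addresses why $\widehat{FFH}$ is well defined or how to compare complexes over different base rings. The hat complex is $\C^-(\hyperbox,\Lambda)/U_i\cdot\C^-(\hyperbox,\Lambda)$ for a \emph{chosen} color $i$, and two complete systems for the same link can have different numbers of colors, so one cannot simply match up hat complexes. The paper handles this with two pieces of filtered algebra you are missing: an explicit mapping-cone argument (comparing $\C^-/(U_i,U_j)$ with $Cone(U_j)$ on $\C^-/(U_i)$, together with a basis-dependent splitting of its $E_2$ page) showing that the choice of $U_i$ changes $\widehat{FFH}$ only by an $H^*(S^1)$ factor; and a five-lemma argument showing that an $\varepsilon$-filtered map inducing isomorphisms on $E_1$ of the \emph{minus} complexes induces isomorphisms on $E_2$ of the hat complexes. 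Without these (or substitutes for them), your comparison of transition maps only controls the minus theory, and the statement about $\widehat{FFH}$, including the elementary coloring changes needed when a system is not maximally colored, does not follow. Also note that no appeal to the invariance of two-handle cobordism maps is needed or used: once the transition maps are filtered and are isomorphisms on $E_1$, the conclusion on $E_2$ is purely algebraic.
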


Here, {\em stable isomorphism} means an isomorphism up to tensoring with copies of $H^*(S^1)$, where the exact number of such copies is made precise in Theorem~\ref{surgerytheorem}.  To avoid this concern in the introduction, we will work with basic systems (see Section~\ref{cobordismrephrasesection} for details).  In this case there are no factors of $H^*(S^1)$.  The general statements can be adjusted as necessary.  

\begin{remark}
From now on, when we refer to a framed link, $L$, we will assume that $L$ comes with a fixed orientation.  We will often not decorate this link $L$ with $\rightarrow$ even though it is oriented, as the decorated notation will be used later in the link surgery formula to denote any arbitrary orientation on $L$.  We expect that the Framed Floer homology should actually be independent of the choice of orientation of the link.    
\end{remark}

By Theorem~\ref{invariancetheorem}, we may correctly speak of $\widehat{FFH}(L,\Lambda,\mathfrak{s})$ (we will omit the underlying $Y$ from the notation).  
While gradings are a powerful tool in Heegaard Floer homology, we will not make use of them; we do remark though that all of the isomorphisms in our theorems do respect the relative-gradings.

\begin{remark}\label{linksurgerygeneralization}
The link surgery formula generalizes to nullhomologous framed links in arbitrary three-manifolds, $Y$, as long as one only considers the Spin$^c$ structures on $Y_{\Lambda}(L)$ which restrict to be torsion on $Y$.  In fact, we will always assume that our links are nullhomologous and the Spin$^c$ structures we work with on the ambient manifold $Y$ are torsion.  In this context, the Framed Floer homology of the empty link in $\mathbb{T}^3$ has rank 6 (Proposition 1.9 in \cite{absgraded}).  On the other hand, the Framed Floer homology of the 0-framed Borromean rings in $S^3$, which surgers to $\mathbb{T}^3$, has rank 8 (this can be deduced from Theorem 1.3 in \cite{hftriple} and Theorem~\ref{largesurgeries}).  
For this reason, when comparing Framed Floer homologies, we will only consider framed links in the same ambient manifold. 
\end{remark}

Since Heegaard Floer homology is an invariant of three-manifolds, Ozsv\'ath asked if the $E_2$ page of their spectral sequence (reduced Khovanov homology) was also an invariant of the double branched cover of the link.  Watson gives two links with the same cover, but different Khovanov homology \cite{liambranched}.  Based on the above remark, we would like to ask if Framed Floer homology is an invariant of the surgered manifold when restricted to links in a fixed ambient manifold.  

\begin{theorem}\label{noninvariancetheorem}
$\widehat{FFH}(L,\Lambda)$ is NOT an invariant of the three-manifold $Y_\Lambda(L)$.
\end{theorem}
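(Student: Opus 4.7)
The strategy is to exhibit an explicit pair of framed oriented links $(L_1,\Lambda_1)$ and $(L_2,\Lambda_2)$ in a common ambient integral homology sphere $Y$ satisfying $Y_{\Lambda_1}(L_1)\cong Y_{\Lambda_2}(L_2)$ but $\widehat{FFH}(L_1,\Lambda_1)\not\cong \widehat{FFH}(L_2,\Lambda_2)$; a single such pair is all that the theorem requires. The most economical source of candidates is a pair of framed links related by Kirby moves, since these preserve the surgered three-manifold while changing the underlying link.

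Concretely, I would take $Y=S^3$ and $L_1=\emptyset$, so that $Y_{\Lambda_1}(L_1)=S^3$ and $\widehat{FFH}(L_1,\Lambda_1)=\widehat{HF}(S^3)=\mathbb{F}$, of rank one. For $L_2$ I would try a framed link in $S^3$ that also surgers to $S^3$ but is expected to have Framed Floer homology of rank strictly greater than one. Natural candidates are a $(+1,-1)$-framed two-component unlink, a $0$-framed Hopf link, or a stabilization of $\emptyset$ by a single $\pm 1$-framed unknot, any one of which may be analyzed separately.

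The computation of $\widehat{FFH}(L_2,\Lambda_2)$ proceeds via the cobordism rephrasing promised earlier in the paper: $\widehat{FFH}(L_2,\Lambda_2)$ is the homology of a hypercube whose vertices are the Heegaard Floer homologies of appropriate large surgeries on sublinks of $L_2$, and whose edges are two-handle cobordism maps between those surgered manifolds. For the suggested candidates the vertices are copies of $\widehat{HF}(S^3)$ and $\widehat{HF}(S^1\times S^2)$, both of which are known exactly. The edge maps can then be pinned down using positive or negative definiteness of the relevant four-manifolds, the standard blow-up/blow-down formulas, and the surgery exact triangle on the unknot, for which split links moreover reduce the problem to the one-component case via the tensor product structure of the hypercube.

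The main obstacle is ensuring that the hypercube differentials do not accidentally collapse the rank of $\widehat{FFH}(L_2,\Lambda_2)$ down to one, in which case the candidate fails to distinguish itself from $\widehat{FFH}(\emptyset)$. Overcoming this requires a precise identification of at least one cobordism map, rather than a crude dimension count of the $E_1$-page; this is the technical heart of the argument, and will likely dictate which of the candidates above is best suited for the proof. Once some $L_2$ with $\widehat{FFH}$ of rank strictly larger than $1$ is produced, Theorem~\ref{invariancetheorem} permits us to speak unambiguously of $\widehat{FFH}(L,\Lambda)$, and the non-invariance statement of Theorem~\ref{noninvariancetheorem} follows at once.
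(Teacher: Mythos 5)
There is a genuine gap: the proposal never actually produces a pair with different Framed Floer homologies, and all three candidates you name for $L_2$ provably fail. A stabilization of $\emptyset$ by a $\pm 1$-framed unknot is ruled out by Proposition~\ref{stabilizationinvariance}, and the $(+1,-1)$-framed two-component unlink is ruled out by combining Proposition~\ref{kunnethformula} with Remark~\ref{knotsvanish} (for any knot, $\widehat{FFH}\cong\widehat{HF}$ of the surgery, so each tensor factor has rank one); indeed, by Remark~\ref{knotsvanish} no one-component candidate can ever work. The $0$-framed Hopf link also fails: both components are unknots, and for the unknot the maps $(\Phi^{+K}_0)_*$ and $(\Phi^{-K}_0)_*$ are separately isomorphisms; because the linking number is $1$, these two maps land in \emph{different} $[11_{\mathbf{s}}]$ summands, so they cannot cancel, and one checks directly that $d_1\colon E_1^1\to E_1^0$ is surjective. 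Hence $E_2^0=0$, the only possible higher differential $d_2$ (which must land in filtration level $0$) vanishes, $E_2\cong E_\infty$, and $\widehat{FFH}$ of the $0$-framed Hopf link has rank one, the same as $\widehat{FFH}(\emptyset)$. The step you defer — ``a precise identification of at least one cobordism map'' to prevent collapse — is exactly the content of the theorem, so as written the argument is a plan rather than a proof; moreover, the collapse mechanism above shows that any example built solely from unknotted components surgering to $S^3$ is a poor place to look, since the relevant $\Psi$ maps never vanish in a way that leaves room for a $d_2$.

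The paper's counterexample is structured quite differently and is worth comparing against. It takes $L_1=T_L\sqcup T_R$ and $L_2$ its handleslide (components $T_L\# T_R$ and $T_R$), both with framing $\mathbf{0}$, presenting $M=S^3_0(T_L)\#S^3_0(T_R)$. The Künneth formula gives $\widehat{FFH}(L_1,\mathbf{0},\mathfrak{s})\cong\widehat{HF}(M,\mathfrak{s})$, of rank $4$ in the torsion Spin$^c$ structure, while for $L_2$ the knot Floer computations for $T_R$ and $T_L\#T_R$ show that $\Psi^{T_R}_0$ and $\Psi^{T_L\#T_R}_0$ both vanish; this forces $\widehat{FFH}(L_2,\mathbf{0},[(0,0)])$ to have rank at least $6$ (Lemma~\ref{largetruncs} supplies the needed rank-$6$ identification of the top face), so the two groups differ. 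The essential ingredient is a component whose knot Floer homology is rich enough that the edge map into filtration level $0$ vanishes in some Spin$^c$ structure, leaving a nonzero $d_2$; if you wish to salvage your empty-link comparison you would need a link surgering to $S^3$ with such components, which none of your proposed candidates has.
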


The counterexample we will provide is given by comparing the disjoint union of the left- and right-handed trefoils to a link obtained by handlesliding one component over the other. 

Another way of studying the three-manifold invariance of Heegaard Floer homology has been proposed by Manolescu.  `Define' the Heegaard Floer homology of a manifold $Y$ to be the homology of the link surgery formula for a framed link $(L,\Lambda)$ such that $Y = S^3_\Lambda(L)$ (each $Y$ will always have such a description).  To show from this definition that Heegaard Floer homology is a three-manifold invariant one must prove that the homology of the link surgery formula is independent of the surgery presentation.  Manolescu, Ozsv\'ath, and Thurston have used the link surgery formula to give a completely combinatorial description of Heegaard Floer homology \cite{hflzcombo}.  Therefore, one could hopefully tailor such a proof to fit into this combinatorial framework as well.  We would thus like to understand how non-invariant Framed Floer homology really is, since this may give some insight into the obstructions to methods of such a proof.  We will also see later how in some cases this non-invariance can be studied on the level of four-manifolds.  
  
Recall that if two homeomorphic three-manifolds can be presented by surgery on oriented, framed links $(L_1,\Lambda_1)$ and $(L_2,\Lambda_2)$ in $S^3$, then these two pairs are related by changes in orientation of components, link isotopies, handleslides, and (de)stabilizations.  Due to Theorem~\ref{noninvariancetheorem}, we know that Framed Floer homology cannot be invariant under all such moves.  In particular, this implies that if a quasi-isomorphism between the surgery complexes for framed links presenting the same manifold preserves the $\varepsilon$-filtration, it cannot induce an isomorphism on either of the first two pages of the $\varepsilon$-filtration spectral sequence.  Since most of the usual maps that one writes down between link surgery complexes are $\varepsilon$-filtered, the construction of such explicit maps will be somewhat unnatural.  Most likely, such a map will not respect the $\varepsilon$-filtration.  

On the other hand, we will establish invariance under stabilizations.  Recall that a stabilization of $(L',\Lambda')$ is obtained by adding a $\pm 1$-framed, geometrically split unknot $U$ to $(L',\Lambda')$. 

\begin{proposition}\label{stabilizationinvariance}
If $(L,\Lambda)$ is obtained by a stabilization of $(L',\Lambda')$, and $\hyperbox$, $\hyperbox'$ are complete systems for $L, L'$ respectively, then there are isomorphisms
\[
H_*(\C^\circ(\hyperbox,\Lambda,\mathfrak{s})) \cong H_*(\C^\circ(\hyperbox',\Lambda',\mathfrak{s})) \text{ and } \widehat{FFH}(L,\Lambda,\mathfrak{s}) \cong \widehat{FFH}(L',\Lambda',\mathfrak{s}).
\] 
\end{proposition}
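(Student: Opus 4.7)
The first isomorphism is essentially a tautology: the surgery-complex homology computes Heegaard Floer homology of the surgered manifold. Since $U$ is a split $\pm 1$-framed unknot, $Y_\Lambda(L) \cong Y_{\Lambda'}(L') \# S^3 \cong Y_{\Lambda'}(L')$, and because $H^2(S^3)=0$ Spin$^c$ structures correspond under this identification. Combining with the Manolescu--Ozsv\'ath theorem,
\[
H_*(\C^\circ(\hyperbox,\Lambda,\mathfrak{s})) \;\cong\; \hfbold^\circ(Y_\Lambda(L),\mathfrak{s}) \;=\; \hfbold^\circ(Y_{\Lambda'}(L'),\mathfrak{s}) \;\cong\; H_*(\C^\circ(\hyperbox',\Lambda',\mathfrak{s})).
\]

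For the Framed Floer statement, the plan is to use Theorem~\ref{invariancetheorem} to pass to a convenient choice of $\hyperbox$ that exploits the split geometry. I would construct a \emph{product} complete system for $L = L' \sqcup U$ by taking $\hyperbox'$ on one side, a complete system $\hyperbox_U$ for the $\pm 1$-framed unknot inside a disjoint $3$-ball, and then connect-summing all of the Heegaard data. The key claim is that with these choices the surgery hypercube factors as a tensor product of hypercubes
\[
\C^\circ(\hyperbox,\Lambda,\mathfrak{s}) \;\cong\; \C^\circ(\hyperbox',\Lambda',\mathfrak{s}') \,\otimes\, \C^\circ(\hyperbox_U,\pm 1),
\]
compatibly with the $\varepsilon$-filtrations: the $\varepsilon$-weight of a pure tensor is the sum of the weights of the factors, so the left-hand filtration is the tensor product filtration.

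Granted this, specializing to the hat flavor and applying the K\"unneth theorem for spectral sequences of tensor-product filtered complexes over the field $\mathbb{F}$ identifies
\[
\widehat{FFH}(L,\Lambda,\mathfrak{s}) \;\cong\; \widehat{FFH}(L',\Lambda',\mathfrak{s}') \,\otimes\, \widehat{FFH}(U,\pm 1).
\]
It remains to compute $\widehat{FFH}(U,\pm 1)$. The hypercube for the one-component link $U$ is $1$-dimensional, so the $\varepsilon$-filtration has only two nontrivial levels and $d_r = 0$ for $r \geq 2$; the spectral sequence collapses at $E_2 = E_\infty$. Hence $\widehat{FFH}(U,\pm 1) = \widehat{\hfbold}(S^3_{\pm 1}(U)) = \widehat{HF}(S^3) = \mathbb{F}$, and tensoring with $\mathbb{F}$ is the identity, yielding the desired isomorphism.

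The main obstacle is justifying the tensor-product decomposition of the surgery hypercube. This is the hypercube-level analogue of the standard fact that $\widehat{HFL}$ of a split link is the tensor product of the component $\widehat{HFL}$'s, but requires checking that every structure map in the Manolescu--Ozsv\'ath construction---the internal differentials, the edge maps, and the higher polygon maps along the faces of the cube---behaves multiplicatively when the Heegaard data splits as a connect-sum. Once this is verified, the rest of the argument is a formal K\"unneth computation combined with the two-term unknot calculation above.
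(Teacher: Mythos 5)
Your central step---the chain-level tensor decomposition $\C^\circ(\hyperbox,\Lambda,\mathfrak{s}) \cong \C^\circ(\hyperbox',\Lambda',\mathfrak{s}') \otimes \C^\circ(\hyperbox_U,\pm 1)$ of the surgery hypercube, compatible with all structure maps---is exactly the part you defer (``once this is verified\ldots''), and it is a genuine gap rather than a routine check. Two things go wrong if you try to verify it. First, the hypercube differential for $L'\cup U$ contains diagonal terms $\Phi^{\vec{K}\cup\pm U}$ given by higher polygon counts, and for the total complex to be a tensor product of the two lower-dimensional hypercubes these must vanish or split multiplicatively; nothing in the Manolescu--Ozsv\'ath construction guarantees this for a ``product'' complete system without an argument. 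Second, the chain groups are \emph{completed} products over $\mathbb{H}(L)$ within a fixed Spin$^c$ class (infinite in the $s_n$-direction, since the framing on $U$ is $\pm1$), so they are not literally the algebraic tensor product of the two factors. The paper's own K\"unneth argument (Proposition~\ref{kunnethformula}) shows how delicate this is: there the identification is only made at the $(E_0,d_0)$ and $E_1$ levels, the complete system must be chosen so that all destabilization moves are supported on one side of the connect sum (with the free basepoint killing curves crossing the neck), and one then argues separately that $d_1$ respects the splitting; a chain-level splitting of the full hypercube is never claimed. If all you want is the hat-flavor $\widehat{FFH}$ statement, you could instead quote Proposition~\ref{kunnethformula} together with Remark~\ref{knotsvanish} (the paper notes this shortcut), but that is not the argument you wrote down.

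The paper's proof is quite different and proves both isomorphisms, for every flavor, in one stroke. It does \emph{not} argue as you do for the first isomorphism, via $Y_\Lambda(L)\cong Y_{\Lambda'}(L')$ and the three-manifold invariance of $\mathbf{HF}^\circ$: invoking that invariance is precisely what this section is trying to minimize (the goal being a surgery-formula, ideally combinatorial, route to invariance), and it also contributes nothing toward the $\widehat{FFH}$ half. Instead, one builds $\hyperbox$ from a complete system for $L'$ by inserting the unknot with both basepoints in one standard region, so that $A_n\equiv 0$ on all generators; then $\incl^{+U}$ (resp.\ $\incl^{-U}$) is the identity for $s_n\ge 0$ (resp.\ $s_n\le 0$), so $\Phi^{+U}$ (resp.\ $\Phi^{-U}$) is a quasi-isomorphism there. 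A horizontal truncation argument shows the pieces $\Mink_>$ and $\Mink_<$ are acyclic, leaving $\Link^0_0\simeq\Link^1_0$, which Remark~\ref{restrictedsystems} identifies with $\C^\circ(\hyperbox',\Lambda',\mathfrak{s})$; since all maps used are $\varepsilon$-filtered and are quasi-isomorphisms of the type counted by $d_1$, the same truncation gives the $\widehat{FFH}$ isomorphism, and Theorem~\ref{invariancetheorem} then transfers the result to arbitrary complete systems. Your proposal, even with the splitting established, would cover only the hat flavor for the spectral-sequence half and would rest on Heegaard Floer invariance for the other half.
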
     

\begin{remark}
The proof of this proposition works by making an initial choice of $\hyperbox$ based on $\hyperbox'$.  We then invoke the invariance of the link surgery formula.  However, the proof that the link surgery formula is invariant under the choice of complete system subtly makes use of the three-manifold invariance of Heegaard Floer homology.  We do suspect that this assumption can be avoided when restricting to the combinatorial setting.  
\end{remark}

Another natural invariance property of mod 2 Heegaard Floer homology for three-manifolds is that, gradings aside, it is preserved under orientation-reversal (Proposition 2.5 in \cite{hfpa}).  Recall that $-S^3_\Lambda(L) \cong S^3_{-\Lambda}(\bar{L})$.  Therefore, the analogous question to ask is in what capacity this might hold for $\widehat{FFH}(L,\Lambda)$ and $\widehat{FFH}(\bar{L},-\Lambda)$ for framed links in $S^3$.  
  
\begin{proposition}\label{nonmirrortheorem}
There exists a framed link $(L,\Lambda)$ in $S^3$ such that the ranks of $\widehat{FFH}(L,\Lambda)$ and $\widehat{FFH}(\bar{L},-\Lambda)$ are not equal.
\end{proposition}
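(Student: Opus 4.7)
The plan is to exhibit an explicit counterexample, patterned after the link used for Theorem~\ref{noninvariancetheorem}. The starting point is the reformulation, mentioned just after Remark~\ref{linksurgerygeneralization}, that $\widehat{FFH}(L,\Lambda)$ is the homology of a hypercube whose vertices are Heegaard Floer homologies of three-manifolds obtained from sublinks $M \subseteq L$ by performing $\Lambda$-surgery on $M$ and a sufficiently large positive framing on its complement, with edges given by two-handle cobordism maps. Comparing this hypercube for $(L,\Lambda)$ and $(\bar L,-\Lambda)$, the vertices have matching ranks by Proposition 2.5 in \cite{hfpa}, since each vertex of the mirror hypercube is the orientation reverse of the corresponding vertex of the original. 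The question therefore reduces to whether the ranks of the edge differentials (and their compositions on the $E_1$ page) can change under this mirroring, which is not formally guaranteed by $\widehat{HF}(-Y) \cong \widehat{HF}(Y)$.

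For the concrete example, I would take $L$ to be a two-component link built from chiral knots such as the disjoint union of two right-handed trefoils with integer framings, so that $\bar L$ is the disjoint union of two left-handed trefoils with the opposite framings. Since the trefoil is an L-space knot, its full knot Floer complex is known explicitly, and the integer surgeries formula of \cite{hfkz} describes the relevant cobordism-induced maps between large surgery and the actual framed surgery entirely in terms of combinatorially presented maps on $\mathbb{F}[[U]]$-modules. The key point is that the knot Floer complex for the left-handed trefoil is the dual complex of that for the right-handed trefoil, so that the ranks of the induced maps on $\widehat{HF}$ (as opposed to their underlying groups) are sensitive to chirality. I would compute the two $E_2$ pages directly from the $2 \times 2$ hypercube and observe that the total ranks disagree. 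If necessary, one can tune the integer framings $(\lambda_1,\lambda_2)$ to amplify this asymmetry, since for the trefoil all surgery manifolds are lens spaces with well understood $\widehat{HF}$.

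The main obstacle is verifying the rank mismatch rather than producing the example. One has to be careful that the orientation-reversal symmetry of $\widehat{HF}$ does not, by some unnoticed coincidence, extend to the hypercube level; this requires bookkeeping the explicit identification of the hat-flavored cobordism map for $(\bar L,-\Lambda)$ with some naturally conjugate map on the original side. Once this is set up correctly, the trefoil computation is mechanical via the integer surgeries formula, and the counterexample emerges from the asymmetry between $\widehat{HF}$-surgery maps for the two chiralities. Finally, invariance of the resulting ranks under the choice of complete system $\hyperbox$ is guaranteed by Theorem~\ref{invariancetheorem}, so no additional verification is needed on that front.
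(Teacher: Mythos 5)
Your choice of example cannot work. For a split link such as the disjoint union of two right-handed trefoils, the paper's own K\"unneth formula (Proposition~\ref{kunnethformula}) together with Remark~\ref{knotsvanish} forces
\[
\rk \widehat{FFH}(K_1 \coprod K_2,(\lambda_1,\lambda_2)) \;=\; \rk \widehat{HF}(S^3_{\lambda_1}(K_1)) \cdot \rk \widehat{HF}(S^3_{\lambda_2}(K_2)),
\]
since for a single knot the $\varepsilon$-spectral sequence has depth one and collapses, so $\widehat{FFH}(K,\lambda) \cong \widehat{HF}(S^3_\lambda(K))$. The mirror $(\bar K_1 \coprod \bar K_2,(-\lambda_1,-\lambda_2))$ gives the product of $\rk \widehat{HF}(S^3_{-\lambda_i}(\bar K_i)) = \rk \widehat{HF}(-S^3_{\lambda_i}(K_i))$, and mod 2 Heegaard Floer homology has the same rank under orientation reversal (Proposition 2.5 of \cite{hfpa}). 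So no tuning of the integer framings can produce a rank discrepancy: every split two-component link of knots is mirror-symmetric in total $\widehat{FFH}$-rank. (Incidentally, the claim that all surgeries on the trefoil are lens spaces is also false, but that is beside the main point.)

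The mechanism you need, and the one the paper exploits, is a higher differential that appears on one side of the mirror but not the other, which requires a genuinely non-split interaction between the two components. The paper's example is the $0$-framed link $L$ with components $T_L \# T_R$ and $T_R$ (obtained from $T_L \coprod T_R$ by a handleslide). The computations of Section~\ref{knotsurgerycalc} show $\Psi_0^{T_R} = 0$ and $\Psi_0^{T_L\#T_R} = 0$, so at $\mathbf{s}=(0,0)$ nothing maps onto $E^0_1$ and a nonzero $d_2$ forces $\rk \widehat{FFH}(L,\mathbf{0},[(0,0)]) = 6$, whereas for the mirror $\bar L$, whose second component is $T_L$, the map $\Psi_s^{T_L}$ is surjective for every $s$, so $E^0_2 = 0$, the spectral sequence collapses, and $\widehat{FFH}(\bar L,-\Lambda,\mathfrak{s}) \cong \widehat{HF}(S^3_{-\Lambda}(\bar L),\mathfrak{s})$, which has rank $4$ in the relevant Spin$^c$ structure. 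The chirality asymmetry you want to exploit lives precisely in the difference $\Psi_0^{T_L} \neq 0$ versus $\Psi_0^{T_R} = 0$, and it only becomes visible in total rank when the components are not split apart; your tensor-product setup averages it away.
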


\subsection{Properties of Framed Floer Homology}
While Theorem~\ref{noninvariancetheorem} may convince the reader that Framed Floer homology is not worth studying, there are useful properties that it does satisfy.  In analogy with Heegaard Floer homology, we have a K\"unneth formula.

\begin{proposition}\label{kunnethformula}
If $(L_1,\Lambda_1)$ and $(L_2,\Lambda_2)$ are framed, oriented links in $Y_1$ and $Y_2$ respectively, then  
\[
\widehat{FFH}(L_1 \coprod L_2, \Lambda_1 \oplus \Lambda_2, \mathfrak{s}_1 \# \mathfrak{s}_2) \cong \widehat{FFH}(L_1,\Lambda_1,\mathfrak{s}_1) \otimes \widehat{FFH}(L_2,\Lambda_2,\mathfrak{s}_2) .
\]
\end{proposition}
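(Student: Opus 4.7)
The plan is to realize the entire link surgery complex for $(L_1 \coprod L_2, \Lambda_1 \oplus \Lambda_2)$ as a tensor product of the surgery complexes for $(L_1,\Lambda_1)$ and $(L_2,\Lambda_2)$, filtered by the sum of the individual $\varepsilon$-filtrations, and then invoke a purely algebraic Künneth statement for spectral sequences of tensor-product filtrations over a field.

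First I would build the complete system. Choose complete systems $\hyperbox_1$ and $\hyperbox_2$ for $L_1 \subset Y_1$ and $L_2 \subset Y_2$. Since $L_1 \coprod L_2$ sits in $Y_1 \# Y_2$, I would form $\hyperbox$ by taking, at each vertex $(v_1,v_2) \in \{0,1\}^{n_1} \times \{0,1\}^{n_2} \cong \{0,1\}^{n_1+n_2}$, the disjoint union of the Heegaard diagrams at $v_1$ in $\hyperbox_1$ and $v_2$ in $\hyperbox_2$ (connect-summed at a pair of basepoints, in the standard way used to prove Künneth for $\widehat{HF}$). Choose the almost-complex structures to be split near the connect-sum region. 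The generators at vertex $(v_1,v_2)$ are then pairs, and a standard neck-stretching argument shows that the holomorphic polygons counted in the link surgery formula split as pairs of polygons in the two factors. Hence, at the chain level, $\C^\circ(\hyperbox,\Lambda,\mathfrak{s}_1 \# \mathfrak{s}_2)$ is isomorphic to $\C^\circ(\hyperbox_1,\Lambda_1,\mathfrak{s}_1) \otimes \C^\circ(\hyperbox_2,\Lambda_2,\mathfrak{s}_2)$ as hypercubes of chain complexes over $\mathbb{F}[[U]]$, with total differential of the tensor-product form.

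Next I would analyze the $\varepsilon$-filtration. By construction $\varepsilon(v_1,v_2) = \varepsilon(v_1) + \varepsilon(v_2)$, so the filtration on the product hypercube is exactly the tensor product filtration of the two factor filtrations. Working over the field $\mathbb{F}$ (the hat flavor, and setting $U=0$ where appropriate for the $E_1$ computation), we have $E_0(\C \otimes \C') = E_0(\C) \otimes E_0(\C')$ on the nose, and then the ordinary Künneth theorem for chain complexes over a field gives
\[
E_1(\C \otimes \C') \cong E_1(\C) \otimes E_1(\C'), \qquad E_2(\C \otimes \C') \cong E_2(\C) \otimes E_2(\C'),
\]
by applying Künneth twice (first to pass from $E_0$ to $E_1$, then to pass from $E_1$ to $E_2$, since the $d_1$-differentials on the tensor-product complex are of the form $d_1 \otimes 1 + 1 \otimes d_1$). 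Taking $E_2$ on the $\widehat{}$ version yields $\widehat{FFH}(\hyperbox,\Lambda,\mathfrak{s}_1\#\mathfrak{s}_2) \cong \widehat{FFH}(\hyperbox_1,\Lambda_1,\mathfrak{s}_1) \otimes \widehat{FFH}(\hyperbox_2,\Lambda_2,\mathfrak{s}_2)$. Finally, Theorem~\ref{invariancetheorem} (applied to each link separately) lets me drop the dependence on the chosen complete systems.

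The main obstacle is verifying that the polygon-counting maps populating the hypercube truly split as tensor products when one connect-sums the Heegaard data. This is the same type of degeneration argument used in the proof of Künneth for $\widehat{HF}$ and its generalizations to the link surgery formula in \cite{hflz}, but it must be checked that all of the higher diagonal maps of the hypercube behave correctly under neck-stretching across the connect-sum sphere, and that the relative Spin$^c$ decomposition matches the splitting $\mathfrak{s} = \mathfrak{s}_1 \# \mathfrak{s}_2$ at every vertex. Once this splitting is established at the level of hypercubes, the spectral-sequence Künneth is purely formal.
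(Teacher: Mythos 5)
There is a genuine gap, and it sits exactly where you flag ``the main obstacle.'' Your argument rests on the claim that the full link surgery hypercube for $L_1 \coprod L_2$ is chain-level isomorphic to the tensor product $\C^\circ(\hyperbox_1,\Lambda_1,\mathfrak{s}_1) \otimes \C^\circ(\hyperbox_2,\Lambda_2,\mathfrak{s}_2)$, with total differential of tensor-product form. That is a much stronger statement than ``polygons split as pairs under neck-stretching.'' For it to hold, every diagonal map $\Phi^{\vec M}$ of the hypercube with $M$ containing components of \emph{both} $L_1$ and $L_2$ would have to vanish (the tensor differential $D_1 \otimes 1 + 1 \otimes D_2$ has no mixed-direction components), and every diagonal map in a pure $L_1$-direction would have to equal $\Phi^{\vec M_1} \otimes \mathrm{id}$ at the chain level. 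These maps are not raw polygon counts on a fixed diagram: they are built from the complete system's $\alpha$- and $\beta$-hyperboxes, including the chosen fillings $\Theta_{\varepsilon,\varepsilon'}$ and the compression of hyperboxes, so ``split almost-complex structure plus degeneration'' does not by itself control them; one would have to construct the complete system for $L_1 \coprod L_2$ so that all these compatibilities hold and then verify the rigidity/index bookkeeping for split polygons. You acknowledge this must be checked but do not carry it out, and without it the formal spectral-sequence K\"unneth has nothing to apply to.

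The paper deliberately avoids proving any chain-level splitting of the whole hypercube. It first proves a K\"unneth formula only for the vertex ($\widehat{\mathfrak{A}}$-) complexes, using a free basepoint at the connect-sum neck with its variable set to $0$ (no neck-stretching needed), which identifies the $E_0$ and hence $E_1$ groups. It then observes, as you would need to, that this identification ``does not a priori commute with $d_1$,'' and resolves only that: the complete system $\hyperbox_{1,2}$ is built so that destabilizing a component $K \subseteq L_1$ uses Heegaard moves on the $\Sigma_1$-side together with small isotopies on the $\Sigma_2$-side, and the nearest-point map shows that $(\Phi^{\vec K})_*$ acts on $E_1$ as $(\Phi^{\vec K})_* \otimes \mathrm{id}$. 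Since $d_1$ only involves the single-component maps $\Phi^{\pm K}$, this suffices to identify $(E_1,d_1)$ with the tensor-product complex, and $E_2 = \widehat{FFH}$ follows from the ordinary algebraic K\"unneth theorem over $\mathbb{F}$; the higher and mixed diagonal maps never need to be controlled. If you want to salvage your approach, either carry out the chain-level verification for all mixed and higher maps (substantial work, and stronger than what the proposition requires), or retreat, as the paper does, to proving the splitting only on the $(E_1,d_1)$ page.
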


It also turns out that $\widehat{FFH}$ satisfies one of the more prominent and powerful properties of Heegaard Floer homology: a surgery exact triangle.  Exact triangles for both Heegaard Floer homology and Framed Floer homology will come simultaneously from a more general setup.

\begin{theorem} \label{sestheorem}
Fix a framed link $(L',\Lambda')$ and a nullhomologous knot $K$ in $Y_{\Lambda'}(L')$, which we have isotoped to sit outside of the surgery region.  Choose a complete system of hyperboxes, $\hyperbox$, for $L= L' \cup K \subseteq S^3$ and let $\hyperbox' = \hyperbox|_{L'}$ be the restricted complete system for $L'$.  Denote by $\Lambda^r$ the framing on $L$ with $\Lambda^r|_{L'} = \Lambda'$ and $\Lambda^r|_K = r$.  Then, there is a short exact sequence of $\varepsilon$-filtered complexes 
\[
0 \longrightarrow \C^\circ(\hyperbox',\Lambda') \longrightarrow \C^\circ(\hyperbox,\Lambda^0) \longrightarrow \C^\circ(\hyperbox,\Lambda^1) \longrightarrow 0
\]
which also induces a short exact sequence on the $E_0$ and on the $E_1$ pages of the respective $\varepsilon$-spectral sequence.
\end{theorem}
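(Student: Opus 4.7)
The plan is to realize the short exact sequence by decomposing the $K$-direction of the link surgery hypercube. Split the underlying module of $\C^\circ(\hyperbox,\Lambda^r)$ into two halves: let $\mathcal{E}$ denote the ``$K$-inactive'' portion, indexed by sublinks $M \subseteq L$ containing $K$ (so $K$ does not appear in the active sublink $L - M \subseteq L'$), and let $\mathcal{Q}$ denote the ``$K$-active'' complement, indexed by sublinks $M \subseteq L'$. In the link surgery hypercube the only differentials between these two pieces go from $\mathcal{Q}$ to $\mathcal{E}$, via the $K$-direction cone maps, so $\mathcal{E}$ is a subcomplex of $\C^\circ(\hyperbox,\Lambda^r)$ and $\mathcal{Q}$ inherits the structure of the quotient complex.

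The first identification is $\mathcal{E} \cong \C^\circ(\hyperbox',\Lambda')$ as $\varepsilon$-filtered complexes. By the restriction property $\hyperbox|_{L'} = \hyperbox'$, the Floer data at the vertex $M' \cup K$ in $\mathcal{E}$ coincides with the Floer data at the vertex $M'$ in $\C^\circ(\hyperbox',\Lambda')$, and the $L'$-direction hypercube maps restricted to $\mathcal{E}$ agree verbatim with those of $\C^\circ(\hyperbox',\Lambda')$. Since $K$ participates in no Floer data or differential within $\mathcal{E}$, this identification is independent of the framing $r$ on $K$ and provides the desired inclusion $\C^\circ(\hyperbox',\Lambda') \hookrightarrow \C^\circ(\hyperbox,\Lambda^0)$.

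The main step is identifying the quotient $\C^\circ(\hyperbox,\Lambda^0)/\mathcal{E} \cong \mathcal{Q}$, equipped only with its internal hypercube maps (the $K$-direction cone maps having been killed in the quotient), with the full complex $\C^\circ(\hyperbox,\Lambda^1) = \mathcal{Q} \oplus \mathcal{E}$ carrying its framing-$1$ differentials including the cone map in the $K$-direction. Since both underlying modules are infinite-rank direct sums indexed by $\mathbb{H}(L)$, there is no rank obstruction to such an isomorphism at the module level. I expect it to be constructed explicitly from the way the Spin$^c$ projection $\psi^K$ depends on the framing on $K$: the difference between this projection for $\Lambda^0$ and $\Lambda^1$ is a prescribed shift by the linking row of $K$ in $\mathbb{H}(L)$, and this shift should allow one to absorb the extra $\mathcal{E}$-summand of $\C^\circ(\hyperbox,\Lambda^1)$ into $\mathcal{Q}$ via a reindexing that makes the framing-$1$ differentials correspond to the internal differentials of $\mathcal{Q}$.

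Finally, compatibility with the $\varepsilon$-filtration follows from the construction: the inclusion raises the hypercube height by $1$ and the quotient identification preserves $\varepsilon$-level, so the short exact sequence descends to short exact sequences on the $E_0$ and $E_1$ pages. The main obstacle will be the explicit chain-level construction of the quotient identification: while the corresponding long exact sequence in Heegaard Floer homology is the standard surgery triangle, its chain-level realization within the link surgery formula requires careful bookkeeping of the $\mathbb{H}(L)$-lattice reindexing.
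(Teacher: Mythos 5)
There is a genuine gap: both identifications on which your decomposition rests are false, and the step you defer (``the chain-level construction of the quotient identification'') is precisely the content of the theorem. First, the subcomplex $\mathcal{E}$ spanned by the vertices $\varepsilon(M)$ with $K \subseteq M$ is not isomorphic to $\C^\circ(\hyperbox',\Lambda')$: each chain group in the surgery formula is a product over $\mathbf{s} \in \mathbb{H}(L)$, and since $\psi^M(\mathbf{s})$ forgets the $K$-coordinate when $K \subseteq M$, the piece $\mathcal{E}$ contains one copy of $\C^\circ(\hyperbox',\Lambda')$ for \emph{every} value of $s_n \in \mathbb{H}(K) \cong \mathbb{Z}$ (in the paper's notation, one copy $\C^1_{\langle \mathbf{s} \rangle}$ for each class $\langle \mathbf{s} \rangle$). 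The first map of the short exact sequence is therefore not the inclusion of the subcomplex $\mathcal{E}$ but the diagonal map $f = \prod f_{\langle \mathbf{s} \rangle}$ sending $\C^\circ(\hyperbox',\Lambda')$ identically into every bottom copy; it is injective but very far from surjective onto $\mathcal{E}$. Second, and decisively, the quotient $\C^\circ(\hyperbox,\Lambda^0)/\mathcal{E}$ (the top part $\mathcal{Q}$ with only its internal differentials) cannot be isomorphic to the full complex $\C^\circ(\hyperbox,\Lambda^1)$ by any reindexing of the $\mathbb{H}(L)$-lattice: take $L' = \emptyset$ and $K$ the unknot in $S^3$. Then the quotient has homology $\prod_{s \in \mathbb{Z}} H_*(\widehat{\mathfrak{A}}(\hyperbox^K,s)) \cong \prod_{\mathbb{Z}} \mathbb{F}$, while $\widehat{\C}(\hyperbox,1)$ computes $\widehat{HF}(S^3_1(U)) \cong \mathbb{F}$, so no chain isomorphism (nor even quasi-isomorphism) exists.

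The actual argument must instead construct a surjection $g = g^0 + g^1$ from $\C^\circ(\hyperbox,\Lambda^0)$ onto \emph{all} of $\C^\circ(\hyperbox,\Lambda^1)$ whose kernel is exactly the diagonal image of $f$: on the bottom it is $x \mapsto (x,x)$ into the two adjacent copies $\C^1_{\langle \mathbf{s} \rangle} \oplus \C^1_{\langle \mathbf{s}+e_n \rangle}$, and on the top it is $x \mapsto (\proj^-_{\mathbf{s}}(x), x, \proj^+_{\mathbf{s}}(x))$, where the $\proj^{\pm}$ are built from the inclusion maps $\incl^{\pm K}$ (a cut-off by the $K$-Alexander grading). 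One then has to verify that $g$ is a chain map, which uses the vanishing $\Phi^{\mp K}_{\mathbf{s} \pm e_n} \circ \proj^{\pm}_{\mathbf{s}} = 0$ together with the commutation relation (\ref{destabinclcommute}) between inclusions for $K$ and destabilizations along $L'$; that $\ker g^0 = 0$ and $g^0$, $g^1$ are surjective, which requires producing infinite ``tail'' sequences in the product over the $\mathbb{H}(K)$-direction; and only then does filtration-preservation give the exactness on the $E_0$ and $E_1$ pages. None of this is present in your proposal, and the decomposition you start from forecloses it, so the argument as written does not prove the theorem.
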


Recall that a {\em surgery exact triangle} is a long exact sequence relating the Floer homology groups of three different framed surgeries on a knot in a three-manifold (see, for example, Section 9 of \cite{hfpa}).  

By choosing $M = S^3_{\Lambda'}(L')$, Theorem~\ref{sestheorem} gives a long exact sequence for a nullhomologous knot in any three-manifold $M$ (compare to Theorem 1.7 of \cite{hfpa})
\[
\ldots \longrightarrow \hfbold^\circ(M) \longrightarrow \hfbold^\circ (M_0(K)) \longrightarrow \hfbold^\circ(M_1(K)) \longrightarrow 
\hfbold^\circ(M) \longrightarrow \ldots
\]
The analogous notion of a surgery exact triangle between Framed Floer homology groups is clear.  Taking homology of the $E_1$ pages in Theorem~\ref{sestheorem} gives  
\begin{corollary}\label{ffles} There is a surgery exact triangle:
\[
\ldots \longrightarrow \widehat{FFH}(L',\Lambda') \longrightarrow \widehat{FFH}(L,\Lambda^0) \longrightarrow \widehat{FFH}(L,\Lambda^1) \longrightarrow \widehat{FFH}(L',\Lambda') \longrightarrow \ldots
\]
\end{corollary}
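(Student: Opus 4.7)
The plan is quite short: the corollary should follow formally from Theorem~\ref{sestheorem} by applying the long exact sequence associated to a short exact sequence of chain complexes, together with the invariance result of Theorem~\ref{invariancetheorem}. I would proceed as follows.

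First, I would extract from Theorem~\ref{sestheorem} the short exact sequence on the $E_1$ page of the $\varepsilon$-filtration spectral sequence:
\[
0 \longrightarrow E_1(\hyperbox',\Lambda') \longrightarrow E_1(\hyperbox,\Lambda^0) \longrightarrow E_1(\hyperbox,\Lambda^1) \longrightarrow 0.
\]
The $E_1$ page is a chain complex (with differential $d_1$), and by hypothesis the maps in this sequence are chain maps between these complexes. I would verify that the maps appearing in Theorem~\ref{sestheorem}, being $\varepsilon$-filtered and respecting $d_0$, indeed descend to bona fide chain maps on $(E_1,d_1)$, so that the zig-zag lemma of homological algebra applies.

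Next, I would apply the zig-zag lemma to obtain the long exact sequence in homology
\[
\ldots \longrightarrow H_*(E_1(\hyperbox',\Lambda')) \longrightarrow H_*(E_1(\hyperbox,\Lambda^0)) \longrightarrow H_*(E_1(\hyperbox,\Lambda^1)) \longrightarrow H_*(E_1(\hyperbox',\Lambda')) \longrightarrow \ldots
\]
and identify each term with the appropriate $E_2$ page, which by definition is $\widehat{FFH}$ of the corresponding framed link together with the chosen complete system. Finally, I would invoke Theorem~\ref{invariancetheorem} to replace $\widehat{FFH}(\hyperbox',\Lambda')$, $\widehat{FFH}(\hyperbox,\Lambda^0)$, and $\widehat{FFH}(\hyperbox,\Lambda^1)$ by the (stable) isomorphism-class invariants $\widehat{FFH}(L',\Lambda')$, $\widehat{FFH}(L,\Lambda^0)$, and $\widehat{FFH}(L,\Lambda^1)$, respectively, yielding the stated triangle.

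The only mild point to watch is the identification of the restricted complete system $\hyperbox|_{L'}$ with a complete system for $L'$ in the sense needed for invariance, and the compatibility of Spin$^c$ decompositions across the three terms; both are essentially built into the setup of Theorem~\ref{sestheorem}. There is no substantive obstacle beyond this bookkeeping, since the analytic/combinatorial content of the exact triangle has already been absorbed into the short exact sequence provided by Theorem~\ref{sestheorem}.
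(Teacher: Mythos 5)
Your argument is exactly the paper's: the paper obtains the corollary by taking homology of the short exact sequence on the $E_1$ pages furnished by Theorem~\ref{sestheorem} (in the hat flavor), which is the zig-zag long exact sequence you describe, with the identification of the restricted system $\hyperbox|_{L'}$ and the invariance statement handled as you note. No issues; this matches the intended proof.
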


The proof of Theorem~\ref{sestheorem} will be completely algebraic.  We may therefore apply the combinatorial link surgery formula to obtain

\begin{corollary}\label{combinatorialles}
Fix a nullhomologous knot $K$ in $Y$.  There is a combinatorial construction and proof of a surgery exact triangle for $\mathbf{HF}^\circ(Y)$, $\mathbf{HF}^\circ(Y_0(K))$, and $\mathbf{HF}^\circ(Y_1(K))$.  
\end{corollary}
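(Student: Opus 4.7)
The strategy is to combine Theorem~\ref{sestheorem} with the combinatorial realization of the link surgery formula due to Manolescu-Ozsv\'ath-Thurston \cite{hflzcombo}. First, I would present $Y$ as integer surgery on a framed link $(L',\Lambda')$ in $S^3$; such a presentation always exists. Isotope $K$ to lie outside the surgery region, so that $K$ becomes a component of a link $L = L' \cup K \subseteq S^3$, and set $\Lambda^r = \Lambda' \cup \{r\}$ on $L$ for $r = 0, 1$. Then $Y \cong S^3_{\Lambda'}(L')$, $Y_0(K) \cong S^3_{\Lambda^0}(L)$, and $Y_1(K) \cong S^3_{\Lambda^1}(L)$, which puts us exactly in the setting of Theorem~\ref{sestheorem}.

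Next, choose a complete system $\hyperbox$ for $L$ and let $\hyperbox' = \hyperbox|_{L'}$. Theorem~\ref{sestheorem} yields a short exact sequence of chain complexes
\[
0 \longrightarrow \C^\circ(\hyperbox',\Lambda') \longrightarrow \C^\circ(\hyperbox,\Lambda^0) \longrightarrow \C^\circ(\hyperbox,\Lambda^1) \longrightarrow 0,
\]
whose associated long exact sequence in homology, combined with the Manolescu-Ozsv\'ath isomorphism $H_*(\C^\circ(\hyperbox,\Lambda)) \cong \hfbold^\circ(S^3_\Lambda(L))$, is the desired exact triangle relating $\hfbold^\circ(Y)$, $\hfbold^\circ(Y_0(K))$, and $\hfbold^\circ(Y_1(K))$.

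To upgrade this to a combinatorial construction and proof, I would invoke the fact that for any link in $S^3$ the hypercube $\C^\circ(\hyperbox,\Lambda)$ can be assembled from a grid diagram by a purely combinatorial recipe, and that the homology isomorphism with $\hfbold^\circ(S^3_\Lambda(L))$ is also established combinatorially in \cite{hflzcombo}. Since the proof of Theorem~\ref{sestheorem} is advertised as completely algebraic, and the maps in the short exact sequence are the standard inclusion and projection maps determined by the hypercube structure on the $K$-direction, they are visible in the Manolescu-Ozsv\'ath-Thurston model without any analytic input. Running the argument of Theorem~\ref{sestheorem} inside this combinatorial framework produces the triangle, and its exactness, by purely combinatorial manipulations.

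The main obstacle I anticipate is bookkeeping: one must check that the inclusion $\C^\circ(\hyperbox',\Lambda') \hookrightarrow \C^\circ(\hyperbox,\Lambda^0)$ and the quotient $\C^\circ(\hyperbox,\Lambda^0) \twoheadrightarrow \C^\circ(\hyperbox,\Lambda^1)$ coming from Theorem~\ref{sestheorem} are literally the maps one writes down on the combinatorial hypercube of \cite{hflzcombo} when the framing on $K$ is changed from $0$ to $1$, and in particular that the restricted system $\hyperbox'$ fits within the same grid-diagram framework used for $\hyperbox$. Because the Manolescu-Ozsv\'ath-Thurston construction is designed as an explicit combinatorial instance of a link surgery hypercube, this should reduce to a direct identification of the relevant subcomplexes rather than any substantively new argument.
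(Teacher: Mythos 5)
Your proposal follows essentially the same route as the paper: combine Theorem~\ref{sestheorem} with the Manolescu--Ozsv\'ath--Thurston combinatorial surgery formula (Theorem~\ref{combinatorialhf}), observing that the maps in the short exact sequence involve only multiplications by powers of $U$ determined by Alexander gradings, which are computable from a grid diagram. The paper's proof is just a compressed version of this, additionally noting explicitly that the one relation used, (\ref{destabinclcommute}), is itself established by counting powers of $U$ and hence is combinatorial.
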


\subsection{Framed Floer Homology and Property 2R}
\begin{definition}
We will say that a two-component link $L$ in $S^3$ has {\em Property 2R} if for any $\Lambda$-surgery on $L$ which yields $S^2 \times S^1 \# S^2 \times S^1$, the pair $(L,\Lambda)$ can be related to the 0-framed two-component unlink, $V$, by only handleslides (no stabilizations allowed).
\end{definition} 

Note that if $L$ cannot surger to $S^2 \times S^1 \# S^2 \times S^1$ it automatically has Property 2R.  This is a generalization of the classical Property R - a knot has Property R if it is the unknot or it does not surger to $S^2 \times S^1$.  Since Gabai proved in \cite{gabaifoliations3} that all knots have Property R, the following conjecture has been previously proposed.  

\begin{conjecture}
All two-component links have Property 2R.
\end{conjecture}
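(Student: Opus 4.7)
The plan is to attack this conjecture using Framed Floer homology as a handleslide obstruction, in the spirit of the paper's final section. First I would compute $\widehat{FFH}(V,\mathbf{0})$ for the $0$-framed two-component unlink, using the K\"unneth formula of Proposition~\ref{kunnethformula} to reduce to $\widehat{FFH}$ of the $0$-framed unknot in $S^3$. The latter is accessible through the reformulation of $\widehat{FFH}$ as a cobordism complex built from Heegaard Floer groups of large surgeries on sublinks, and this calculation establishes the ``model'' answer that every other candidate link must match.

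Second, I would classify the framed two-component links $(L,\Lambda)$ with $S^3_\Lambda(L)\cong S^2\times S^1 \# S^2\times S^1$. Linking-form and $H_1$ constraints force $\Lambda$ into a very restricted normal form (vanishing determinant and trivial linking form), dramatically narrowing the candidate set. For each surviving candidate I would compute $\widehat{FFH}(L,\Lambda)$ via the surgery exact triangle of Corollary~\ref{ffles} together with the cobordism description, and compare to $\widehat{FFH}(V,\mathbf{0})$. Any mismatch would give an $\widehat{FFH}$-theoretic obstruction ruling out $(L,\Lambda)$ as a counterexample. Because Theorem~\ref{noninvariancetheorem} shows $\widehat{FFH}$ is not a three-manifold invariant in general, I would need to search for a refinement of $\widehat{FFH}$ (extracted from further pages of the $\varepsilon$-spectral sequence, or from the interaction with the $\mathrm{Spin}^c$-decomposition) that is nonetheless invariant under handleslides within the locus of links surgering to $S^2\times S^1 \# S^2\times S^1$.

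To pass from obstruction to conjecture, I would then attempt an explicit handle-calculus argument: for each candidate $(L,\Lambda)$ not already eliminated, construct by hand a sequence of handleslides reducing it to $V$. The FFH computations serve as a consistency check at each step and rule out large families of candidates a priori.

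The main obstacle is that the conjecture properly generalizes Gabai's Property R theorem, whose only known proofs use sutured manifold hierarchies and taut foliations rather than Floer-theoretic methods. Framed Floer homology by its very definition can only produce handleslide \emph{obstructions}, not a reduction algorithm, so even a fully successful FFH analysis would not yield the required handleslide sequences on its own; it would have to be combined with four-manifold or sutured techniques. Realistically I expect this plan to eliminate broad families of potential counterexamples and thereby provide substantial evidence for the conjecture, rather than to settle it outright.
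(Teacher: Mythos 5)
This statement is a \emph{conjecture} in the paper, not a theorem: the paper offers no proof of it, and in fact the surrounding discussion (the Gompf--Scharlemann--Thompson family of expected counterexamples, and Proposition~\ref{property2r}) is aimed at potentially \emph{disproving} it. Your proposal is therefore not comparable to a proof in the paper, and on its own terms it is not a proof either --- you concede as much in your final paragraph. Beyond that, the central mechanism you propose runs in the wrong direction. The paper's Proposition~\ref{property2r} says: if $L$ surgers to $S^2\times S^1\#S^2\times S^1$ and $\rk\widehat{FFH}(L,\mathbf{0},\mathfrak{s}_0)\neq 4$, then $L$ does \emph{not} have Property 2R. So a mismatch with the unlink's value does not ``rule out $(L,\Lambda)$ as a counterexample''; it exhibits $L$ as a counterexample to the conjecture. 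Conversely, a match ($\rk = 4$) gives no information at all, because by Theorem~\ref{noninvariancetheorem} Framed Floer homology is not handleslide-invariant, so agreement of ranks does not produce, or even suggest, a handleslide sequence to the $0$-framed unlink. Hence the FFH computations you outline can never certify Property 2R for a single link, let alone all of them.

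The remaining ingredient of your plan --- ``construct by hand a sequence of handleslides reducing it to $V$'' for each candidate --- is exactly the open problem, and there is no finite or otherwise tractable classification of $0$-framed two-component links surgering to $S^2\times S^1\#S^2\times S^1$ that would make such a case-by-case reduction feasible; the homological normal form you invoke (linking number $0$, framing $\mathbf{0}$) leaves an infinite and poorly understood family. As you note, the only proof of the one-component analogue (Gabai's Property R) uses sutured manifold hierarchies and taut foliations, and nothing in the Floer-theoretic apparatus of this paper supplies a substitute. So the gap is twofold: the obstruction you rely on points the opposite way from what a proof of the conjecture needs, and the constructive handle-calculus step is asserted rather than supplied.
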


It is easy to see that if surgery on $L$ gives $S^2 \times S^1 \# S^2 \times S^1$, then the linking of the two-components of $L$ is trivial and the framing must be 0.  However, there is more that can be deduced about $L$.  For example, Gompf, Scharlemann, and Thompson have shown that the knot with minimal genus occurring in a counterexample to Property 2R (if one exists) must {\em not} be fibered \cite{gompfscharlemannthompson}.  Furthermore, they construct a family of links which they expect to be counterexamples.  Motivated by this, we utilize the non-invariance of Floer homology to try to create a potential obstruction to links having Property 2R.  
 
Let $\mathfrak{s}_0$ denote the unique torsion Spin$^c$ structure on $S^2 \times S^1 \# S^2 \times S^1$.  To put the following proposition in perspective, we first point out that 
\[
\rk \widehat{FFH}(V,\mathbf{0}) = \rk \widehat{FFH}(V,\mathbf{0},\mathfrak
{s}_0) = \rk \widehat{HF}(S^2 \times S^1 \# S^2 \times S^1) = 4.
\] 
This follows, for instance, by Proposition~\ref{kunnethformula} and Remark~\ref{knotsvanish}.  We will show that every link with Property 2R must satisfy this condition. 

\begin{proposition}\label{property2r}
Let $L$ be a two-component link in $S^3$ which surgers to $S^2 \times S^1 \# S^2 \times S^1$.  If the rank of $\widehat{FFH}(L,\mathbf{0},\mathfrak{s}_0)$ is not 4, then $L$ does not have Property 2R.  
\end{proposition}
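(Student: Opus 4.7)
The plan is to prove the contrapositive: if $L$ has Property 2R, then $\rk \widehat{FFH}(L,\mathbf{0},\mathfrak{s}_0) = 4$. The base case $\rk \widehat{FFH}(V,\mathbf{0},\mathfrak{s}_0) = 4$ is already recorded in the paragraph preceding the proposition, obtained via Proposition~\ref{kunnethformula} together with Remark~\ref{knotsvanish}.

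Suppose $L$ has Property 2R, so there is a sequence of handleslides $V = L_0, L_1, \ldots, L_n = L$. Applying the standard change-of-framing formula $\lambda_1' = \lambda_1 + \lambda_2 + 2\,\mathrm{lk}(K_1,K_2)$ and the change-of-linking formula $\mathrm{lk}(K_1',K_2) = \mathrm{lk}(K_1,K_2) + \lambda_2$ inductively, and starting from $(V,\mathbf{0})$ which is 0-framed and 0-linked, one sees that every intermediate framed link $(L_k,\Lambda_k)$ remains 0-framed and 0-linked. In particular each $L_k$ still surgers to $S^2 \times S^1 \# S^2 \times S^1$ in the unique torsion Spin$^c$ structure $\mathfrak{s}_0$, so the claim reduces to showing that a single handleslide relating two such 0-framed, 0-linked two-component links preserves $\widehat{FFH}(-, \mathbf{0}, \mathfrak{s}_0)$.

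For this step I would work with the description of $\widehat{FFH}$ flagged in the introduction as the homology of a complex whose chain groups are $\widehat{\mathbf{HF}}$ of large surgeries on sublinks of $L$ and whose differentials are assembled from 2-handle cobordism maps. A handleslide of $L_k$ to $L_{k+1}$ corresponds to a diffeomorphism of the traced-out 4-manifold that permutes sublinks, identifies the corresponding large-surgery 3-manifolds along with their Spin$^c$ structures, and commutes with the 2-handle cobordism maps by 4-manifold invariance of Heegaard Floer cobordism maps. The goal is then to verify that this identification respects the $\varepsilon$-filtration, yielding an isomorphism on $E_2 = \widehat{FFH}$.

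The hard part is exactly this last compatibility with $\varepsilon$. Theorem~\ref{noninvariancetheorem} tells us that $\widehat{FFH}$ is not handleslide-invariant in general (the trefoil-pair counterexample), so one must crucially exploit the rigidity of this setting -- two components, all framings and linkings zero, target manifold with $\rk \widehat{\mathbf{HF}}(S^2 \times S^1 \# S^2 \times S^1, \mathfrak{s}_0) = 4$ -- to conclude that the handleslide map is strictly $\varepsilon$-filtered rather than filtered only up to a height shift that would obstruct an isomorphism on the $E_2$ page. Once this $\varepsilon$-filtered handleslide invariance is in place, induction along the handleslide sequence promotes the base case to $\rk \widehat{FFH}(L,\mathbf{0},\mathfrak{s}_0) = \rk \widehat{FFH}(V,\mathbf{0},\mathfrak{s}_0) = 4$, completing the contrapositive.
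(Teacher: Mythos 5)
There is a genuine gap, and it is not just the ``hard part'' you flag at the end --- the mechanism you propose for the key step does not exist. You want to show that a single handleslide between 0-framed, algebraically split two-component links induces an $\varepsilon$-filtered isomorphism on $\widehat{FFH}(-,\mathbf{0},\mathfrak{s}_0)$, arguing that the diffeomorphism of the traced 4-manifold ``permutes sublinks'' and ``identifies the corresponding large-surgery 3-manifolds.''  It does not: a handleslide changes the isotopy classes of the individual components (e.g.\ $T_L$ becomes $T_L \# T_R$), so large surgeries on proper sublinks are different 3-manifolds with different Floer homologies, and the $E_1$ chain groups of the two surgery complexes are genuinely non-isomorphic.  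Worse, the ``rigidity'' you invoke --- two components, all framings and linking numbers zero --- is exactly the setting of the counterexample in Theorem~\ref{noninvariancetheorem}: $(T_L \coprod T_R,\mathbf{0})$ and its handleslide are 0-framed and 0-linked, yet their Framed Floer homologies in the torsion Spin$^c$ structure have ranks 4 and 6.  So no argument using only those hypotheses can give the handleslide invariance you need; the only extra input available is that the surgered manifold is $S^2 \times S^1 \# S^2 \times S^1$, and your proposal gives no way to use it.  (A posteriori, the invariance along your handleslide chain is a \emph{consequence} of the proposition, not a route to it.)

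The paper avoids any handleslide-invariance claim for $\widehat{FFH}$.  Instead it uses the diffeomorphism invariance of the rank of the 2-handle cobordism map $\widehat{F}_{X_L,\mathfrak{t}_L}\colon \widehat{HF}(S^3) \to \widehat{HF}(S^2\times S^1 \# S^2\times S^1,\mathfrak{s}_0)$, where $X_L$ is the trace of the surgery: Property 2R makes $X_L$ and $X_V$ diffeomorphic, and by Theorem 11.2 of \cite{hflz} this rank equals the rank of the $\varepsilon$-filtered inclusion $\iota\colon 11_{(0,0)} \hookrightarrow \widehat{\C}(L,\mathbf{0},\mathfrak{s}_0)$ on homology.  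Lemma~\ref{unlinkisinjective} shows $\iota_*$ is non-zero for the unlink (the $E_1$ page already has rank 4, so the spectral sequence collapses and Fact~\ref{filteredisoisiso} applies), while Lemma~\ref{linkis0} shows that if $\rk \widehat{FFH}(L,\mathbf{0},\mathfrak{s}_0) \neq 4$ there must be a non-trivial $d_2$ hitting the rank-one piece $E^0_2$, killing $E^0_\infty$ and forcing $\iota_* = 0$; the two conclusions contradict the diffeomorphism invariance.  If you want to salvage your outline, you would have to replace the handleslide-invariance step by an argument of this kind, i.e.\ one that genuinely uses the target manifold through a quantity that is invariant under handleslides (here, the 4-manifold cobordism map), rather than hoping $\widehat{FFH}$ itself is preserved.
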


It would be interesting to calculate the Framed Floer homology for the potential counterexamples mentioned above.  However, at this time, it is not clear how to perform this calculation.   

\subsection{Outline}
The paper is organized as follows.  In Section~\ref{reviewsection} we collect the relevant ideas and notation from the Manolescu-Ozsv\'ath link surgery formula for our purposes.  In Section~\ref{filteredsection} we review the algebraic machinery for filtrations that we will employ, as well as give the definition of Framed Floer homology.  We give an alternate interpretation of Framed Floer homology in Section~\ref{cobordismrephrasesection}.  Section~\ref{linkinvariancesection} is devoted to answering the question about framed link invariance.   The K\"unneth formula will be proven in Section~\ref{kunnethsection}.  Section~\ref{stabilizationsection} will be where we discuss three-manifold invariance from the perspective of the link surgery formula.  We will compute some examples in Section~\ref{noninvariancesection} to see that Framed Floer homology is not a three-manifold invariant.  We compare the Framed Floer homologies of a particular framed link and its mirror in Section~\ref{mirrorsection}.  In Section~\ref{surgeryexactsection} the exact sequences are constructed.  Section~\ref{property2rsection} addresses the relationship between Framed Floer homology and Property 2R.  The final section is devoted to the discussion of some possible future directions.

\section*{Acknowledgments}
I would like to thank Ciprian Manolescu for suggesting the problem of invariance via the link surgery formula and encouraging me to work on this project, as well as his wonderful insights as an advisor.  He especially helped with the material in Section~\ref{surgeryexactsection}.  I would also like to thank Liam Watson for interesting discussions and showing interest in Framed Floer homology.  Finally, I am indebted to Eamonn Tweedy for correcting my understanding of filtered chain complexes and spectral sequences.


\section{The Surgery Formula}\label{reviewsection}
In this section, we review the link surgery formula of Manolescu-Ozsv\'ath \cite{hflz}; this will be the underlying object to study for the remainder of the paper.  This is by no means a self-contained summary, so we refer the reader to \cite{hftriple} for another recap (or \cite{hflzcombo} for a construction with grid diagrams).  Recall that $\mathbf{HF}^\circ$ represents Heegaard Floer homology with mod 2 coefficients and the $U$-variable completed.  We assume the reader has familiarity with the material on Heegaard Floer homology from \cite{hfinvariance} and \cite{hfl}.  

Throughout, $L = L_1 \cup \ldots \cup L_{\ell}$ will be a fixed oriented, $\ell$-component link in an integer homology sphere $Y$.  We will also use $|L|$ to denote the number of components of $L$.  $\vec{M}$ will refer to an arbitrary orientation on the sublink $M \subseteq L$.  Furthermore, if a sublink is instead decorated by $+$ or has no vector decoration (resp. $-$), this means that all of the components are oriented in a manner compatible with (resp. opposite to) the fixed orientation of $L$.  Let $I_{\pm}(L,\vec{M})$ denote the set of indices of components in $\vec{M}$ which are oriented compatibly/oppositely with $L$.   Finally, all of our modules will always be over $\mathbb{F}$-algebras. 

\subsection{Hyperboxes of Chain Complexes}
\begin{definition} An $n$-\emph{dimensional hyperbox of size }$\mathbf{d} = (d_1,\ldots,d_n)$ is the subset of $\mathbb{N}^n$
\[
\mathbb{E}(\mathbf{d}) = \{(\varepsilon_1,\ldots,\varepsilon_n): 0 \leq \varepsilon_i \leq d_i\}. 
\]
If $\mathbb{E}(\mathbf{d})= \{0,1\}^n$, then we say $\mathbb{E}(\mathbf{d})$ is a \emph{hypercube}.  The \emph{length} of $\varepsilon$ is given by 
\[
\| \varepsilon \| = \sum \varepsilon_i.
\] 
\end{definition}
We give hyperboxes the partial-order determined by $\varepsilon' \leq \varepsilon$ if and only if each $\varepsilon'_i \leq \varepsilon_i$.  
We say that $\varepsilon$ and $\varepsilon'$ are {\em neighbors} if they differ by an element of $\{0,1\}^n$.  
\begin{example} Let $L$ be an $\ell$-component link.  We define an $\ell$-dimensional hypercube indexed by the sublinks of $L$ as follows.   The elements of the hypercube are given by $\ell$-tuples $\varepsilon(M)=\varepsilon_1(M)\ldots\varepsilon_\ell(M)$, where each $\varepsilon_i(M)$ satisfies 
\[
\varepsilon_i(M)  = \left\{
\begin{array}{rl}
1 & \text{if } L_i \subseteq M\\
0 & \text{if } L_i \nsubseteq M
\end{array} \right. 
\]
Therefore, $\varepsilon(M') \leq \varepsilon(M)$ if and only if $M' \subseteq M$.  
\end{example}

\begin{definition} Fix $\mathbb{E}(\mathbf{d})$.  An $n$\emph{-dimensional hyperbox of chain complexes for }$\mathbb{E}(\mathbf{d})$ is a collection of $\mathbb{Z}$-graded modules, $C_*^\varepsilon$, for each $\varepsilon \in \mathbb{E}(\mathbf{d})$, with maps $D^{\varepsilon'}_{\varepsilon}: C_*^{\varepsilon} \longrightarrow C_{*-1+\|\varepsilon'\|}^{\varepsilon + \varepsilon'}$ for all $\varepsilon' \in \{0,1\}^n$ satisfying
\begin{equation}\label{hypercuberelation}
\sum_{\tau \leq \varepsilon' \in \{0,1\}^n} \!\!\!\!\!\!\!
D_{\varepsilon + \tau}^{\varepsilon' - \tau} \circ D_{\varepsilon}^{\tau} = 0.
\end{equation}
Here, it is implicit that maps which originate from or land outside the hyperbox are zero.  We will usually omit the subscript of the $D^{\varepsilon'}_{\varepsilon}$ when the domain is clear.   
\end{definition}

\begin{remark}
For each $\varepsilon$, we have that $(C^\varepsilon,D^{\mathbf{0}}_\varepsilon)$ forms a chain complex.  These will be called {\em vertex complexes}.  We will often denote the differential for a vertex complex by $\partial$.   
\end{remark}

Suppose that a hyperbox of chain complexes is in fact defined over a hypercube $\mathbb{E}(\mathbf{d}) = \{0,1\}^n$.  Note that by defining $C_{tot,*} = \sum_{\varepsilon} C_{*+\|\varepsilon\|}^\varepsilon$ and $D = \sum_{\varepsilon} D^\varepsilon$ we get an authentic chain complex, called the {\em total complex}. 
We will often use $C$ to actually refer to $C_{tot}$.  

The link surgery formula will associate to a framed link $(L,\Lambda) \subseteq Y$ a hypercube of chain complexes $\C^-(\hyperbox,\Lambda)$ whose total complex has homology isomorphic to $\mathbf{HF}^-(Y_\Lambda(L))$.

\subsection{The $\mathfrak{A}$-Complexes}
We now make our notation for framings precise.  Choose a basis for $H_1(Y-L) \cong \mathbb{Z}^{\ell}$ given by the oriented meridians of $L$.  The framing, $\Lambda$, will be denoted by a symmetric, integer-valued $\ell \times \ell$-matrix.  The off-diagonal entries $\Lambda_{i,j}$ are given by the linking numbers of the components $L_i$ and $L_j$.  The diagonal entries $\Lambda_{i,i}$ correspond to the surgery coefficients.  Therefore, it is easy to see how each row vector, $\Lambda_i$, is a representation of $L_i$ as an element of $H_1(Y-L)$ in the chosen basis of oriented meridians.

Note that the inclusion of a sublink $M$ induces a natural map from $H_1(Y-L)$ to $H_1(Y-M)$.  We will use the notation $\Lambda|_{M}$ for the restriction of the framing $\Lambda$ to the sublink $M$ - this is the obvious $|M| \times |M|$ submatrix of $\Lambda$.

\begin{definition} Let $\mathbb{H}(L)_i$ be $\frac{lk(L_i,L-L_i)}{2} + \mathbb{Z}$.  The affine lattice $\mathbb{H}(L)$ over $H_1(Y-L)$ is defined by 
\[
\mathbb{H}(L) = \bigoplus^\ell_{i=1} \mathbb{H}(L)_i.
\] 
\end{definition}
The elements of $\mathbb{H}(L)$ correspond to Spin$^c$ structures on $Y$ relative to $L$ (Section 3.2 of \cite{hfl}).  Furthermore, there is a natural identification between Spin$^c$ structures on $Y_\Lambda(L)$ and the $\mathbb{H}(L)/H(L,\Lambda)$, where this notation means quotienting the lattice by the action of each row vector, $\Lambda_i$, on $\mathbb{H}(L)$ (Section 3.7 of \cite{hfl}).   
We will often not distinguish between an equivalence class $[\mathbf{s}] \in \mathbb{H}(L)/H(L,\Lambda)$ and the corresponding Spin$^c$ structure, $\mathfrak{s}$, that one obtains in $Y_{\Lambda}(L)$ via this identification.   

With this in mind, we would like a way to restrict our relative Spin$^c$ structures to sublinks of $L$.
\begin{definition} The map $\psi^{\vec{M}}:\mathbb{H}(L) \longrightarrow \mathbb{H}(L-M)$ is given by $\psi^{\vec{M}}(\mathbf{s}) =  \mathbf{s} - [\vec{M}]/2$.
\end{definition}

We will work with multi-pointed Heegaard diagrams for links, $\hyperbox = (\Sigma_g,\boldalpha,\boldbeta,\mathbf{z},\mathbf{w})$, where $\mathbf{z} =  \{z_1,\ldots,z_m\}$ and $\mathbf{w} = \{w_1,\ldots,w_k\}$ with $k \geq m$.  We will assume that $z_i$ and $w_i$ will always be on the same component for $1 \leq i \leq m$.  Therefore, we will refer to these additional $w_j$ ($j>m$) as free basepoints.  This is just an ordering convention that does not restrict any generality.  Finally, we require that all of our Heegaard diagrams are generic (the $\boldalpha$ and $\boldbeta$ curves intersect transversely) and satisfy the admissibility condition as in Definition 4.1 of \cite{hflz}.

When defining Floer complexes, there are lots of options for base rings to work over - for example, polynomial rings over $\ell$, $k$, and even just one $U$ variable can be found in the literature.  Therefore, we need a rule for declaring which base ring we are working with. 

\begin{definition} A \emph{coloring} with $p$ colors is a surjective function $\tau: \mathbf{z} \cup \mathbf{w} \longrightarrow \{1,\ldots,p\}$ such that all points on the same component of the link get mapped to the same `color'.  A coloring is \emph{maximal} if there are exactly $\ell+(k - m)$ colors.
\end{definition}

Given a coloring $\tau$ with $p$ colors, we will soon see that our Floer complexes are defined over power series rings with $p$ variables.  Colorings will also tell us which variables in our base ring will be counted by disks crossing over specified basepoints.  Note that a coloring gives a coloring of the link components as well.  Let $\tau_i = \tau(w_j)$, where $w_j$ is on the component $L_i$.  

The first step towards our construction of a hypercube of chain complexes for our framed link is to build the $\C^\varepsilon$.  Let's suppose we have a fixed colored Heegaard diagram, $\hyperbox^L = (\Sigma,\boldalpha,\boldbeta,\mathbf{z},\mathbf{w},\tau)$, for $L$.  Let $\mathbb{T}_\alpha = \alpha_1 \times \ldots \times \alpha_{g+k-1}$ and similarly for $\mathbb{T}_\beta$.  There is an absolute Alexander grading $A_i$ on $\mathbb{T}_\alpha \cap \mathbb{T}_\beta$ satisfying 
\[
A_{i}(x) - A_{i}(y) = \sum_{z_j \in L_i} n_{z_j}(\phi) - \sum_{w_j \in L_i} n_{w_j}(\phi),
\] 
for any $\phi \in \pi_2(x,y)$, $1 \leq i \leq \ell$.  

We may add the points $\pm \infty$ to $\mathbb{H}(L)_i$ to obtain  $\overline{\mathbb{H}}(L)_i$.  Set $\overline{\mathbb{H}}(L) = \oplus_i \overline{\mathbb{H}}(L)_i$.  We extend $\psi$ in the obvious way to $\overline{\mathbb{H}}(L)$.  For what follows, we take the necessary conventions with $\pm \infty$ to make the definition make sense, such as $a - \infty < 0$ and $(\infty + a) - (\infty+b) = a-b$.    

\begin{definition} Let $\mathbf{s}=(s_1,\ldots,s_\ell) \in \overline{\mathbb{H}}(L)$.  We will define a complex $\mathfrak{A}^-(\hyperbox^L,\mathbf{s})$ freely generated by $\mathbb{T}_{\alpha} \cap \mathbb{T}_{\beta}$ over $\mathbb{F}[[U_1,\ldots, U_p]]$ and equipped with the differential
\begin{align*}
\partial_{\mathbf{s}}(x) =\sum_{y \in \mathbb{T}_\alpha \cap \mathbb{T}_\beta} \sum_{\substack{{\phi \in \pi_2(x,y)} \\{\mu(\phi)=1}}} \#(\mathcal{M}(\phi)/\mathbb{R})  \cdot & U_{\tau_1}^{E^{1}_{s_1}(\phi)} \cdots U_{\tau_\ell}^{E^{\ell}_{s_\ell}(\phi)} \\
&\cdot U^{n_{w_{m+1}}(\phi)}_{\tau(w_{m+1})} \cdots U^{n_{w_k}(\phi)}_{\tau(w_k)} y,
\end{align*}
where 
\[
E^i_s(\phi) = (A_i(x)-s) \vee 0 - (A_i(y)-s) \vee 0 + \sum_{\tau(w_j) = \tau_i} n_{w_j}(\phi)
\]
and $x \vee y = \max\{x,y\}$.  Here, $\mathcal{M}(\phi)$ is counting holomorphic disks in Sym$^{g+k-1}(\Sigma)$.    
\end{definition}

\begin{remark}
Observe that if $s_i = \infty$, then $E^i_{s_i}(\phi) = \sum_{\tau(w_j) = \tau_i} n_{w_j}(\phi)$.  A similar statement holds for $-\infty$.  A special case of this is that the Heegaard diagram for the empty link, $\hyperbox^\emptyset$, represents $Y$, and thus there is only one $\mathfrak{A}^-$-complex, namely $\mathfrak{A}^-(\hyperbox^\emptyset) = \mathbf{CF}^-(\hyperbox^\emptyset)$.  
\end{remark}

\subsection{Hyperboxes of Heegaard Diagrams}
The key idea that we will work with is moving curves around on a fixed surface by isotopies and handleslides which do not cross over the basepoints.  We will call any set of curves on the same punctured surface $(\Sigma,\mathbf{z},\mathbf{w})$ {\em strongly equivalent} it they are related by only handleslides and isotopies that do not cross $\mathbf{z}$ or $\mathbf{w}$.  

Suppose we have a hyperbox of size $\mathbf{d} \in \mathbb{N}^n$.  Let $\mathbf{d}^\circ = (d_1^\circ,\ldots,d_n^\circ)$, where $d_i^\circ = (d_i-1) \vee 0$.  With this, set $n^\circ$ to be the number of $i$ such that $d_i \neq 0$.  

\begin{definition}
An \emph{empty $\beta$-hyperbox} of size $\mathbf{d} \in \mathbb{N}^n$ on the punctured surface $(\Sigma,\mathbf{z},\mathbf{w})$ is a collection of strongly equivalent curves $\boldbeta^{\varepsilon}$ for each $\varepsilon \in \mathbb{E}(\mathbf{d})$ as well as a map $\tau:\mathbf{z} \cup \mathbf{w} \longrightarrow \{1,\ldots,p\}$ for some $p$ which satisfies the following:  
for the $n^\circ$-dimensional unit hypercube, $\mathbb{E}_{n^\circ}$, and each fixed $\varepsilon \in \mathbb{E}(\mathbf{d}^\circ)$, the Heegaard diagram for the unlink in a connect sum of $S^2 \times S^1$'s,  $(\Sigma,\boldbeta^{\varepsilon'},\boldbeta^{\varepsilon''},\mathbf{z},\mathbf{w})$ with $\varepsilon', \varepsilon'' \in \varepsilon + \mathbb{E}_{n^\circ}$, admits $\tau$ as a coloring.   
\end{definition}

\begin{definition}
Suppose that $\tau$ is a coloring with $b$ free basepoints.  A \emph{filling} of an empty $\beta$-hyperbox consists of a choice of elements $\Theta_{\varepsilon,\varepsilon'} \in \mathfrak{A}_{b/2 + \|\varepsilon - \varepsilon'\| - 1}^-((\Sigma,\boldbeta^{\varepsilon},\boldbeta^{\varepsilon'},\mathbf{z},\mathbf{w}),\mathbf{0})$ for all neighbors $\varepsilon,\varepsilon' \in \mathbb{E}(\mathbf{d})$ with $\varepsilon < \varepsilon'$ satisfying: \\
1) if $\|\varepsilon - \varepsilon'\| = 1$, this is a cycle representing a generator of homology in the maximal degree, \\
2) the sum over all polygon counts in the various multi-diagrams consisting of $\boldbeta^{\varepsilon} = \boldbeta^{\varepsilon_0}, \boldbeta^{\varepsilon_1},\ldots, \boldbeta^{\varepsilon_r} = \boldbeta^{\varepsilon'}$ where $\| \varepsilon_{i+1} - \varepsilon_{i} \| = 1$ with fixed vertices $\Theta_{\varepsilon_i,\varepsilon_{i+1}}$ must vanish (Equation 50 in \cite{hflz}).  \\
An empty $\beta$-hyperbox with a choice of filling is called a $\beta${\em-hyperbox}.
\end{definition}

Manolescu and Ozsv\'ath show that every empty $\beta$-hyperbox admits a filling (Lemma 6.6 in \cite{hflz}).  It is clear that an analogous notion for $\alpha$-hyperboxes exists as well.  

\begin{definition}
A collection of functions $r_i:\{1,\ldots,d_i\} \longrightarrow \{\alpha,\beta\}$ for $d_i \in \mathbf{d}$ is called a set of \emph{bipartition functions}.  
\end{definition}

For each $\varepsilon \in \mathbb{E}(\mathbf{d})$, we set $\varepsilon_\alpha$ and $\varepsilon_\beta$ to be 
\[
\varepsilon^i_\alpha = \# (r_i^{-1}(\alpha) \cap \{1,\ldots, \varepsilon_i\}) \]
\[
\varepsilon^i_\beta = \# (r_i^{-1}(\beta) \cap \{1,\ldots, \varepsilon_i\}). 
\]

These $\varepsilon_\alpha$ and $\varepsilon_\beta$ naturally define new hyperboxes $\mathbb{E}(\mathbf{d}^\alpha)$ and $\mathbb{E}(\mathbf{d}^\beta)$.    

\begin{definition}
A \emph{hyperbox of Heegaard diagrams for a link} $\vec{L}$ consists of an $n$-dimensional hyperbox of size $\mathbf{d}$, a collection of bipartition functions $r_i$ containing a $\beta$-hyperbox of size $\mathbf{d}^\beta$ and an $\alpha$-hyperbox of size $\mathbf{d}^\alpha$ on a punctured surface $(\Sigma,\mathbf{z},\mathbf{w})$, and a function $\tau: \mathbf{z} \cup \mathbf{w} \longrightarrow \{1,\ldots,p\}$ for some $p$.  For each $\varepsilon \in \mathbb{E}(\mathbf{d})$, the Heegaard diagram 
\[
\hyperbox_\varepsilon = (\Sigma,\boldalpha^\varepsilon,\boldbeta^\varepsilon,\mathbf{z},\mathbf{w})
\]  
must represent a Heegaard diagram for $\vec{L}$ such that $\tau$ is a coloring.  
\end{definition}  

\begin{definition}
A \emph{hyperbox for the pair} $(L,\vec{M})$, where $M$ is an $n$-component sublink, is an $n$-dimensional hyperbox of Heegaard diagrams for $L-M$ and a choice of ordering for the components of $M$.  
\end{definition}

\begin{definition}  We say that $\hyperbox$ and $\hyperbox'$, two hyperboxes of Heegaard diagrams of size $\mathbf{d}$ for $\vec{M}$ with the same underlying Heegaard surface are \emph{surface isotopic} if there is a single self-diffeomorphism of the underlying Heegaard surface isotopic to the identity, taking $\hyperbox_\varepsilon$ to $\hyperbox'_\varepsilon$ for each $\varepsilon \in \mathbb{E}(\mathbf{d})$ and is supported away from $M$.  In other words, the basepoints, curves, and colorings are preserved at each $\varepsilon$ by the map.  If these are instead hyperboxes for pairs $(L,\vec{M})$, then they are {\em surface isotopic} if the underlying hyperboxes for $L-M$ are surface isotopic and the orderings of $M$ are the same.  
\end{definition}

Given a Heegaard diagram, $\hyperbox$, for $L$ and a choice of oriented sublink $\vec{M}$, we will construct a Heegaard diagram for $L - \vec{M}$.

\begin{definition}
The Heegaard diagram, $r_{\vec{M}}(\hyperbox)$, is the Heegaard diagram for $L-M$ defined by the following method.  If $z_i$ is on a component $L_j$ with $j \in I_+(L,\vec{M})$, then remove $z_i$ from $\hyperbox$.  If $w_i$ is on a component $L_j$ with $j \in I_-(L,\vec{M})$, then we remove $w_i$ and relabel the corresponding $z_i$ as $w_i$.  Restricting the original coloring gives the coloring for $r_{\vec{M}}(\hyperbox)$.
\end{definition}


\begin{remark}
We can apply the restriction operator $r$ to entire hyperboxes by simply doing this to each vertex in the cube.  Note that a filling for an empty hyperbox is a filling for the restricted empty hyperbox.   
\end{remark}

Let $\hyperbox^{L,\vec{M}}$ be a hyperbox of size $\mathbf{d}$ for the pair $(L,\vec{M})$ and suppose $M'' \subseteq M' \subseteq \vec{M}$.  Then, we consider a subhyperbox spanned by the two corners of $\hyperbox^{L,\vec{M}}$ given by  $(\varepsilon_1(M'') \cdot d_1,\ldots, \varepsilon_n(M'') \cdot d_n)$ and $(\varepsilon_1(M') \cdot d_1,\ldots, \varepsilon_n(M') \cdot d_n)$.  We denote this by $\hyperbox^{L,\vec{M}}(M'',M')$.  For notational purposes, we will use $\hyperbox^M$ for $\hyperbox^{M,\emptyset}$.  

The idea for a complete system is now becoming more clear; one would like to be able to relate the complexes $\mathfrak{A}^-(\hyperbox^L,\mathbf{s})$ to $\mathfrak{A}^-(\hyperbox^{L-M},\psi^{\vec{M}}(\mathbf{s}))$-complexes for its sublinks by studying some moves on the diagram level.  This requires some compatibility between the Heegaard diagrams.

\begin{definition}
Fix a link $L$.  Suppose that for a collection of hyperboxes of Heegaard diagrams $\hyperbox^{L',\vec{M}}$ for the pairs $(L',\vec{M})$ with $\vec{M} \subseteq L' \subseteq L$ and every $M' \subseteq \vec{M}$ (the orientation of $M'$ is induced from $\vec{M}$), $\hyperbox^{L',\vec{M}}(\emptyset,M')$ is surface isotopic to $r_{\vec{M}-M'}(\hyperbox^{L',M'})$ and $\hyperbox^{L',\vec{M}}(M',M)$ is surface isotopic to $\hyperbox^{L' - M',\vec{M} - M'}$.  This collection along with a choice of such isotopies form a \emph{complete pre-system of hyperboxes for} $L$.         
\end{definition}

As in \cite{hflz}, instead of complete pre-systems, we will need to work with {\em complete systems} in this paper.  In order to be a complete system, there is an additional homotopical condition given in Definition 6.27 in \cite{hflz} on the paths traced out by the basepoints via the isotopies.  We will ignore this technicality throughout, since we will hardly ever explicitly work with these isotopies. 

\begin{proposition}[Manolescu-Ozsv\'ath]
Complete systems of hyperboxes always exist for any $\vec{L}$. 
\end{proposition}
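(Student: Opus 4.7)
The plan is to construct a complete system by induction on the number of link components. The base cases where $L$ is empty or a knot reduce to choosing a (possibly multi-pointed) Heegaard diagram for $Y$ or $(Y,L)$ together with some Heegaard moves, where the compatibility conditions are either vacuous or trivial to arrange. For the inductive step, I would first fix a multi-pointed Heegaard diagram $\hyperbox^L$ for the full link $L$, and then build for every pair $(L', \vec{M})$ with $\vec{M} \subseteq L' \subseteq L$ a hyperbox $\hyperbox^{L', \vec{M}}$ that interpolates, via Heegaard moves, between the diagram for $L'$ at the corner $\varepsilon = \mathbf{0}$ and the diagram obtained by applying the restriction operator $r_{\vec{M}}$ at the opposite corner. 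Each axis of the hyperbox corresponds to a component of $\vec{M}$, and moving along that axis implements a sequence of $\alpha$- or $\beta$-handleslides and isotopies (governed by the bipartition functions $r_i$) needed to reduce the diagram along that component, being careful that the underlying colorings match up.

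Second, I would construct the algebraic filling data $\Theta_{\varepsilon,\varepsilon'}$. With the empty $\alpha$- and $\beta$-hyperboxes fixed at each vertex, Lemma 6.6 of \cite{hflz} can be applied to produce a filling: one picks top-degree cycles on unit edges, and on longer edges $\|\varepsilon - \varepsilon'\|>1$ one recursively constructs $\Theta_{\varepsilon,\varepsilon'}$ so that the polygon-count equation in the filling definition is satisfied. The inductive key here is that the required chain bounding the polygon sum is known to vanish in homology by a direct computation for diagrams of unlinks in connect sums of $S^2 \times S^1$'s.

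The principal obstacle is the combinatorial orchestration required by the compatibility conditions: for $M'' \subseteq M' \subseteq \vec{M}$, the subhyperbox $\hyperbox^{L',\vec{M}}(\emptyset, M')$ must be surface isotopic to $r_{\vec{M}-M'}(\hyperbox^{L',M'})$, and $\hyperbox^{L',\vec{M}}(M', M)$ must be surface isotopic to $\hyperbox^{L'-M', \vec{M}-M'}$. Since the number of such constraints grows exponentially in $|L|$, the strategy is to fix all the hyperboxes for proper sublinks first (by the inductive hypothesis), and then build the hyperbox for $\vec{M} = L' = L$ as an extension whose boundary subhyperboxes agree on the nose with what was already chosen. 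The surface-isotopy condition (rather than literal equality) together with the additional homotopical compatibility of the basepoint paths from Definition 6.27 of \cite{hflz} provides just enough flexibility to carry out this extension; this is exactly the content of the construction of Manolescu and Ozsváth, to which I would ultimately refer for the detailed verification.
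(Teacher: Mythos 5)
The paper does not actually prove this proposition: it is imported wholesale from Manolescu--Ozsv\'ath, whose construction (of basic systems, Section 6.7 of \cite{hflz}) is the real content, so the comparison has to be with that construction --- and your proposal is not the same argument. Your plan is to fix complete systems on all proper sublinks by induction and then extend to the hyperboxes involving all of $L$, which places the entire weight on the extension step: one must simultaneously realize the exponentially many surface-isotopy constraints on the boundary faces of the new hyperboxes. You do not argue that this can be done; you assert that surface isotopy together with the basepoint-path condition of Definition 6.27 of \cite{hflz} gives ``just enough flexibility'' and then defer to Manolescu--Ozsv\'ath for the verification. That assertion is exactly the crux, so taken as a proof the proposal has a genuine gap, and the final deferral makes the inductive scaffolding you set up redundant.

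The construction in \cite{hflz} avoids this difficulty rather than confronting it. In a basic system, every Heegaard diagram $\hyperbox^{M}$ for a sublink $M \subseteq L$ is obtained from a single diagram $\hyperbox^{L}$ by deleting $z$ basepoints, and the hyperboxes for pairs are manufactured from this one diagram; consequently the compatibility conditions relating $\hyperbox^{L',\vec{M}}(\emptyset,M')$ to $r_{\vec{M}-M'}(\hyperbox^{L',M'})$ and $\hyperbox^{L',\vec{M}}(M',M)$ to $\hyperbox^{L'-M',\vec{M}-M'}$ hold essentially by construction, with the required surface isotopies taken to be (close to) identities, so no matching problem over sublinks ever arises. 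The only genuinely nontrivial input is the algebraic filling data, and there your appeal to Lemma 6.6 of \cite{hflz} is correct: top-degree cycles on unit edges, with the higher $\Theta_{\varepsilon,\varepsilon'}$ produced inductively because the relevant obstructions vanish for diagrams of unlinks in connected sums of $S^2 \times S^1$'s. So the filling half of your sketch matches the source, but the diagrammatic half replaces the trick that makes the theorem accessible with a harder, unproved extension problem.
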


\subsection{The Link Surgery Theorem}\label{surgerytheoremstatement}

Fix a complete system of hyperboxes for $L$.  To define our hypercube of chain complexes, we define the chain groups to be $\C^{\varepsilon(M)} = \prod_{\mathbf{s} \in \mathbb{H}(L)} \C^{\varepsilon(M)}_{\mathbf{s}}$, where 
\[
\C^{\varepsilon(M)}_{\mathbf{s}} = \mathfrak{A}^-(\hyperbox^{L-M},\psi^{M}(\mathbf{s})).
\]
Note that the chain groups do not depend on the choice of framing that we put on our link.  

Let's now relate $\mathfrak{A}^-(\hyperbox^M,\mathbf{s})$ to $\mathfrak{A}^-(\hyperbox^{M-M'},\psi^{\vec{M'}}(\mathbf{s}))$ for $\vec{M'}$ a non-empty sublink of $M$.  This is done in essentially two steps.  First, there is a map $\incl$ derived from taking $\hyperbox^M$ to $r_{\vec{M'}}(\hyperbox^M)$; this comes from some multiplication by powers of $U$.  Next, there will be a map $\destab$ determined by transforming $r_{\vec{M'}}(\hyperbox^M)$ into $\hyperbox^{M-M'}$.  Such a transformation exists because of the compatibility conditions for a complete system.

Fix an oriented sublink $\vec{M}$.  We define a map $p^{\vec{M}}: \overline{\mathbb{H}}(L) \longrightarrow \overline{\mathbb{H}}(L)$ componentwise by  
\begin{equation*}
p^{\vec{M}}_i(s)  = \left\{
\begin{array}{rl}
+\infty & \text{ if } i \in I_+(L,\vec{M}) \\
-\infty & \text{ if } i \in I_-(L,\vec{M}) \\
s & \text{ if } i \neq \pm \infty.  
\end{array} \right. 
\end{equation*}

\begin{definition}
Let $\vec{M} \subseteq L$ and suppose that $s_i \neq \mp \infty$ if $i \in I_{\pm}(L,\vec{M})$.  The \emph{inclusion} $\incl^{\vec{M}}_{\mathbf{s}}:\mathfrak{A}^-(\hyperbox^L,\mathbf{s}) \longrightarrow \mathfrak{A}^-(\hyperbox^L,p^{\vec{M}}(\mathbf{s}))$ is given by the formula
\[
\incl^{\vec{M}}_{\mathbf{s}}(x) = \prod_{i \in I_+(L,\vec{M})} \!\!\!\!\!U_{\tau_i}^{(A_{i}(x) - s_i) \vee 0} \!\!\!\!\!\prod_{j \in I_-(L,\vec{M})} \!\!\!\!\! U_{\tau_j}^{(s_j - A_{j}(x)) \vee 0} x.
\]
\end{definition}

\begin{example}
In the integer surgery formula for knots (Theorem 1.1 of \cite{hfkz}), the inclusions $\incl_s^{+ K}$ and $\incl_s^{-K}$ correspond to the inclusions of $CFK^\infty\{i \vee (j-s) \leq 0\}$ into $CFK^\infty\{i \leq 0\}$ and $CFK^\infty\{j \leq s\}$ respectively.  
\end{example}

There is then an identification between $\mathfrak{A}^-(\hyperbox^L,p^{\vec{M}}(\mathbf{s}))$ and $\mathfrak{A}^-(r_{\vec{M}}(\hyperbox^L),\psi^{\vec{M}}(\mathbf{s}))$, since setting an $s_i$ to $\infty$ (resp. $-\infty$) has the same effect on the $\mathfrak{A}^-$-complex as ignoring $z$ (resp. $w$).  However, this is not the diagram for $\hyperbox^{L-M}$ that we used to define $\C^{\varepsilon(M)}$.  Therefore, we need a way to connect the two to define the $D^{\varepsilon}$ maps.  The important observation is that $\hyperbox^{L-M}$ and $r_{\vec{M}}(\hyperbox^L)$ are related by a sequence of isotopies and handleslides as determined by the complete system.  These Heegaard moves, as well as the identification mentioned, will induce the \emph{destabilization} map, $\destab_{p^{\vec{M}}(\mathbf{s})}^{\vec{M}}$, between the $\mathfrak{A}^-$-complexes.  

Since we will not be making explicit use of the destabilization maps, we do not give the details and instead refer the reader to Section 7.2 of \cite{hflz}.  We remark that when $\vec{M}$ is a knot, the destabilization maps are quasi-isomorphisms given by counting holomorphic triangles (Example 7.2 in \cite{hflz}).  In general, destabilizations by sublinks with more components will be given by higher polygon counts which represent higher dimensional analogues of chain homotopies (and are not necessarily quasi-isomorphisms).  They help relate the $|M|!$ maps obtained by destabilizing the components of $\vec{M}$ individually in a specified order.  

The composition, 
\[
\Phi_s^{\vec{M}} = \destab_{p^{\vec{M}}(s)}^{\vec{M}} \circ \incl_{\mathbf{s}}^{\vec{M}}
\]
is what we will use to build the $D^{\varepsilon}$ in the hypercube of chain complexes.  Finally, we set $\Phi_{\mathbf{s}}^{\emptyset}$ to be the differential $\partial_{\mathbf{s}}$ on the $\mathfrak{A}^-$-complexes, as we still need a $D^{\mathbf{0}}$ in any hyperbox of chain complexes.  

Now we are ready to write down our hypercube of chain complexes explicitly.  Recall that we have already constructed our chain groups $C^{\varepsilon(M)}$ for all sublinks $M$.  For notation, we index the elements of $C^{\varepsilon(M)}$ by using $(\mathbf{s},x)$ to denote $x \in \mathfrak{A}^-(\hyperbox^{L-M},\psi^M(\mathbf{s}))$.  Define $\Lambda_{L,\vec{M}}$ to be $\sum_{i \in I_-(L,\vec{M})} \Lambda_i$.  The differentials $D^\varepsilon$ are given by   
\[
D_{\varepsilon(N)}^{\varepsilon(M)}(\mathbf{s},x) = \bigoplus_{\vec{M}}(s+\Lambda_{L,\vec{M}},\Phi_{\psi^{N}(\mathbf{s})}^{\vec{M}}(x)) \text{ for } (\mathbf{s},x) \in C^{\varepsilon(N)}.
\]  
Here, the notation $\vec{M}$ in the summand means that we are summing over the $2^{|M|}$ orientations for our fixed sublink $M$.  

\begin{definition}
The {\em link surgery formula}, $\C^-(\hyperbox,\Lambda)$, is the total complex of the hypercube of chain complexes $(\C,D)$.  
\end{definition}

Because $[\mathbf{s}] = [\mathbf{s} + \Lambda_i] \in \mathbb{H}(L)/H(L,\Lambda)$ for all $i$, $\C^-(\hyperbox,\Lambda)$ splits into subcomplexes corresponding to Spin$^c$ structures on $Y_{\Lambda}(L)$.  Therefore, we define $\C^-(\hyperbox,\Lambda,\mathfrak{s})$ to be the subcomplex consisting of the $\mathfrak{A}^-(\hyperbox^{L-M},\psi^{M}(\mathbf{s}))$ such that $[\mathbf{s}]$ corresponds to $\mathfrak{s}$.

\begin{theorem}[Manolescu-Ozsv\'ath Link Surgery Theorem, Theorem 7.7 of \cite{hflz}]\label{surgerytheorem} Let $\hyperbox$ be a complete system of hyperboxes for $L$ with $p$ colors and $k$ basepoints of type $w$.  $\C^-(\hyperbox,\Lambda,\mathfrak{s})$, as defined above, is a hypercube of chain complexes.  All $U_i$'s act the same on $H_*(\C^-(\hyperbox,\Lambda,\mathfrak{s}))$.  Finally, $H_*(\C^-(\hyperbox,\Lambda,\mathfrak{s})) \cong \mathbf{HF}^-(Y_\Lambda(L),\mathfrak{s}) \otimes H^*(\mathbb{T}^{k-p})$ as a relatively-graded $\mathbb{F}[[U]]$-module.
\end{theorem}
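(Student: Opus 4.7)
The plan is to verify three claims separately: (i) that $(\C^-(\hyperbox,\Lambda,\mathfrak{s}),D)$ satisfies the hypercube relation~\eqref{hypercuberelation}; (ii) that the formal variables $U_i$ all induce the same endomorphism on $H_*(\C^-(\hyperbox,\Lambda,\mathfrak{s}))$; and (iii) that this homology is isomorphic to $\mathbf{HF}^-(Y_\Lambda(L),\mathfrak{s}) \otimes H^*(\mathbb{T}^{k-p})$ as a relatively-graded $\mathbb{F}[[U]]$-module.

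For (i), I would unravel $D^{\varepsilon(M)} = \bigoplus_{\vec{M}} \Phi^{\vec{M}}$ and check the hypercube identity face by face. Each face relation becomes a statement about sums of compositions $\Phi^{\vec{M}_2-\vec{M}_1} \circ \Phi^{\vec{M}_1}$ over pairs of sublinks $M_1 \subseteq M_2$ and all compatible orientations. Writing $\Phi^{\vec{M}} = \destab^{\vec{M}} \circ \incl^{\vec{M}}$, the inclusions form a strict commuting diagram through the $p^{\vec{M}}$ maps, so the content lies in showing that $\sum \destab^{\vec{M}_2-\vec{M}_1} \circ \destab^{\vec{M}_1} = 0$ on each face. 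This is precisely the $A_\infty$-style identity that the hyperbox-of-Heegaard-diagrams setup is engineered to produce: the holomorphic polygon maps in any multi-diagram drawn from the $\beta$- and $\alpha$-hyperboxes degenerate according to the standard boundary-of-moduli-space argument, and the fillings $\Theta_{\varepsilon,\varepsilon'}$ were chosen so that all extraneous terms vanish. Part (ii) then follows immediately on homology from the fact that, for a knot component, the destabilization $\destab^{\pm K_i}$ is a quasi-isomorphism that intertwines the action of $U$-variables placed at different basepoints on the same component; the $k - p$ free basepoints contribute exactly an $H^*(\mathbb{T}^{k-p})$ factor, and their $U$-actions are nullhomotopic by the usual linking-basepoint homotopy.

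The main content is (iii), which I would prove by induction on $\ell = |L|$. The base case $\ell = 0$ is immediate because $\mathfrak{A}^-(\hyperbox^\emptyset) = \mathbf{CF}^-(\hyperbox^\emptyset)$, which already computes $\mathbf{HF}^-(Y)$. For the inductive step, write $L = L' \cup K$ and split the hypercube along the $K$-direction, exhibiting $\C^-(\hyperbox,\Lambda,\mathfrak{s})$ as the mapping cone of
\[
\sum_{\vec{K}} \Phi^{\vec{K}} : \C^-_{K \not\subseteq M} \longrightarrow \C^-_{K \subseteq M},
\]
where each side is itself a hypercube of chain complexes on the sublinks of $L'$. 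By the inductive hypothesis applied at each fixed Spin$^c$-section, the two sub-hypercubes compute $\mathbf{HF}^-$ of appropriately-modified surgeries; the large-surgery theorem for links identifies the $\mathfrak{A}^-$-complexes with large parameters with Heegaard Floer complexes of large surgeries on $K$ in $Y_{\Lambda|_{L'}}(L')$. The surgery exact triangle for $K$ then identifies the mapping cone with $\mathbf{HF}^-(Y_\Lambda(L),\mathfrak{s})$, and the Spin$^c$-decomposition is compatible with all of this.

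The hard part is in (iii): matching, on the nose, the cobordism-induced maps from the Heegaard Floer surgery exact triangle with the destabilization maps $\destab^{\vec{K}}$ arising from the complete system, and then propagating this compatibility coherently across all $2^\ell$ vertices. Because destabilization along a sublink with more than one component is a higher-polygon map rather than a quasi-isomorphism, the inductive step is not a naive mapping-cone manipulation; one needs precisely the coherence data built into the definition of a complete system (including the homotopical condition on basepoint paths) to assemble the vertex-wise identifications into a single quasi-isomorphism rather than merely an agreement of associated graded pieces.
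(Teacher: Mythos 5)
This theorem is not proved in the paper at all: it is imported verbatim as the Manolescu--Ozsv\'ath link surgery theorem (Theorem 7.7 of \cite{hflz}), so there is no in-paper argument to compare yours against. Measured against the original proof, your outline does follow its broad architecture --- checking the hypercube relation via the compatibility of inclusions and destabilizations and the filling conditions, then an induction on $|L|$ that splits the hypercube along the $K$-direction and uses the large-surgery identification of the $\mathfrak{A}^-$-complexes together with the identification of $\Phi^{\pm K}$ with $2$-handle cobordism maps --- but as written it is a roadmap rather than a proof, and it understates the decisive steps.

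Concretely: in the inductive step you invoke ``the surgery exact triangle for $K$'' to compute the homology of the mapping cone, but an exact triangle does not determine the homology of a cone; what is actually required is the full integer-surgeries (knot surgery) formula for a nullhomologous knot $K$ in the ambient manifold $Y_{\Lambda'}(L')$, i.e.\ a mapping cone over all $s\in\mathbb{H}(K)$ whose vertex complexes are identified with large-surgery Floer complexes and whose edge maps are identified, Spin$^c$-structure by Spin$^c$-structure, with the specific cobordism maps (Proposition~\ref{mappingconeidentification} here, Theorem 10.2 of \cite{hflz}), compatibly with the completions and infinite products. Making those identifications coherent at every vertex and taming the infinitely generated complexes is exactly what Manolescu--Ozsv\'ath's horizontal and vertical truncation arguments accomplish; you flag this as ``the hard part'' but propose no mechanism for it. Your claim that the statement about the $U_i$-actions is ``immediate'' is also not right: in \cite{hflz} relating the actions of the different $U_i$'s is precisely what the vertical truncation is for (this paper says as much in Section~\ref{linkinvariancesection}). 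Finally, attributing the $H^*(\mathbb{T}^{k-p})$ factor to the free basepoints is inaccurate: in a maximal coloring each free basepoint carries its own color and contributes nothing to $k-p$; the torus factors arise from $w$-basepoints in excess of the number of colors, e.g.\ extra $z/w$ pairs on link components. So the proposal is a sensible sketch of the known strategy, but the genuine content of the theorem is deferred rather than supplied.
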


As mentioned before, the relative gradings will not really come into play in this paper.  For the definition of the relative grading, see Section 7.4 of \cite{hflz}.

\begin{remark} Usually, one sets $U=0$ to define $\widehat{HF}$ theories.  The way that this is done for $\widehat{\C}(\hyperbox,\Lambda)$ is to choose some $U_i$ and set this equal to 0 at the chain level; in other words $\widehat{\C}(\hyperbox,\Lambda) = \C^-(\hyperbox,\Lambda)/U_i \cdot \C^-(\hyperbox,\Lambda)$.  The theorem in fact implies that the choice of $i$ does not affect the overall outcome on homology.  
For $\C^\infty(\hyperbox,\Lambda)$, one formally inverts all $U_i$ variables.  Finally, for $\C^+(\hyperbox,\Lambda)$, we simply take the quotient complex $\C^\infty(\hyperbox,\Lambda)/\C^-(\hyperbox,\Lambda)$.  If we want to leave the flavor unspecified, then we use the standard notation $\C^\circ(\hyperbox,\Lambda)$.  The analogous construction holds for the $\mathfrak{A}^\circ(\hyperbox^M,\mathbf{s})$.  The link surgery theorem also holds for the other flavors.
\end{remark}

\begin{remark} In order to show that $\C^-(\hyperbox,\Lambda)$ is in fact a hypercube of chain complexes, one must understand how the destabilizations and inclusions interact when applied to different sublinks to satisfy (\ref{hypercuberelation}).  Suppose that $\vec{M_1}$ and $\vec{M_2}$ are disjoint and that $\mathbf{s}$ has $s_i = \pm \infty$ when $\pm L_i$ is a compatibly oriented component of $\vec{M_1}$.  Then we have from Lemma 7.4 in \cite{hflz}:
\begin{equation}\label{destabinclcommute}
\incl_{\psi^{\vec{M_1}}(\mathbf{s})}^{\vec{M_2}} \circ \destab_{\mathbf{s}}^{\vec{M_1}} = \destab_{p^{\vec{M_2}}(\mathbf{s})}^{\vec{M_1}} \circ \incl_{\mathbf{s}}^{\vec{M_2}}.
\end{equation}
This equation will play a key role when we use the link surgery formula to construct surgery exact triangles.
\end{remark}

Furthermore, the link surgery formula satisfies a certain type of naturality in the sense that certain subcomplexes correspond to surgery on sublinks of $L$.  Fix a complete system $\hyperbox$ for $L$ and a sublink $L'$.  We define the complete system $\hyperbox|_{L'}$ for $L'$ to be given by only considering the hyperboxes of Heegaard diagrams for pairs $(L'',M)$, where $L'' \subseteq L'$.  

Let's consider $\mathbb{H}(L)/H(L,\Lambda|_{L'})$, where now we are only quotienting out by the action of $\Lambda_i$ with $L_i \subseteq L$.  The map $\psi^{L-L'}$ descends to  
\[
\psi^{L-L'}: \mathbb{H}(L)/H(L,\Lambda|_{L'}) \longrightarrow \mathbb{H}(L')/H(L',\Lambda|_{L'}).
\]
Furthermore, given an equivalence class $[\mathbf{t}] \in \mathbb{H}(L)/H(L,\Lambda|_{L'})$, one can define a subcomplex of $\C^\circ(\hyperbox,\Lambda)$,
\[
\C^\circ(\hyperbox,\Lambda)^{L',\mathbf{t}} = \bigoplus_{L-L' \subseteq M \subseteq L} \prod_{\{\mathbf{s} \in \mathbb{H}(L)|[\mathbf{s}] = [\mathbf{t}]\}} \mathfrak{A}^\circ(\hyperbox^{L-M},\psi^M(\mathbf{s})). 
\]  

\begin{remark}\label{restrictedsystems} Given a complete system $\hyperbox$ for $L$ and a sublink $L'$ such that $L-L'$ is nullhomologous in $Y_{\Lambda|_{L'}}(L')$, there is an identification of the subcomplex $\C^\circ(\hyperbox,\Lambda)^{L',\mathbf{t}}$ with $\C^\circ(\hyperbox|_{L'},\Lambda|_{L'},\psi^{L-L'}([\mathbf{t})])$ (see Section 11.1 of \cite{hflz} for the general case).  
\end{remark} 

This is a very key concept which will allow us to calculate the Heegaard Floer homology from surgeries on a link by understanding the surgeries on the various sublinks.

\subsection{Notation and Conventions}
With this machinery in mind, we would like to suppress most of this to the background and introduce some notation to simplify the expressions.  

Fix an oriented, framed link $(L,\Lambda)$ in an integral homology sphere $Y$ and a complete system of hyperboxes $\hyperbox$ for $L$.  Most importantly, we would like to abbreviate the notation for the vertex complexes.  If $\C$ is a hypercube of chain complexes, then we will take $\varepsilon$ to represent $\C^\varepsilon$.   Recall that for a sublink $M \subseteq L$, $\varepsilon_i(M)$ is 1 if the $i$th component of $L$ is in $M$ and 0 otherwise.  Therefore, for $\mathbf{s} \in \mathbb{H}(L)$ we use
\[
\varepsilon(M)_{\mathbf{s}} = \mathfrak{A}^\circ(\hyperbox^{L-M},\psi^M(\mathbf{s})).
\] 
Notice that this means that we are not applying the $\psi$ maps in our notation for the index $\mathbf{s}$.  We may also omit the $\mathbf{s}$ in the $\Phi$ maps when it is clear what the domain should be.  Finally, we use $[\varepsilon(M)_{\mathbf{s}}]$ to represent the homology of the $\mathfrak{A}$-complex,  $H_*(\varepsilon(M)_{\mathbf{s}},\partial_{\psi^M(\mathbf{s})})$.


\section{Framed Floer Homology and Filtered Algebra}\label{filteredsection}
\subsection{Filtered Algebra}
We collect some facts about filtered complexes that we will repeatedly reference.  Throughout the paper, $(\C,\F,\partial)$ will be a filtered chain complex of modules over an $\mathbb{F}$-algebra with filtration levels bounded; in our setup, we will always have a decomposition $\partial = \partial^0 + \partial^1 + \ldots + \partial^n$, where $\F(x) - \F(\partial^j(x)) = j$.  The associated spectral sequence will have pages $E_i$, denoted $E_i(\C)$ or $E^{\C}_i$, with differentials denoted $d_i$.  We will use $E^j_i$ to denote the terms living in filtration level $j$.  Recall that $H_*(E_i,d_i) \cong E_{i+1}$.  Furthermore, if a chain map $f:\C_1 \longrightarrow \C_2$ between filtered complexes does not raise the filtration levels, it is said to be {\em filtered} and induces chain maps $f_i:(E^{\C_1}_i,d^{\C_1}_i) \longrightarrow (E^{\C_2}_i,d^{\C_2}_i)$ for all $i$ (see, for example, \cite{usersguide}). 

\begin{fact}\label{filteredisoisiso} Given a filtered chain map $f: (\C_1,\F_1) \longrightarrow (\C_2,\F_2)$, for each $i \geq 0$ there exist filtrations $\F(i)$ on the mapping cone of $f$, denoted $Cone(f)$, such that $E_i(Cone(f),\F(i)) \cong Cone(f_i)$.  This tells us that over a field, the rank of $f_\infty$ is equal to the rank of $f_*$, the induced map on homology.  More generally, if some $f_i$ induces isomorphisms on the $E_i$ pages, then all subsequent $f_r$ are isomorphisms for $r \geq i$, since a bijective chain map, in this case $f_i:(E_i(\C_1),d_i) \longrightarrow (E_i(\C_2),d_i)$, always has the property that the induced map on homology, now $f_{i+1}$, is an isomorphism.  In this case, $E_{i+1}(Cone(f),\F(i))$ is acyclic and thus $Cone(f)$ is acyclic.  In particular, $f$ is a quasi-isomorphism.  We will heavily rely on this fact.
\end{fact}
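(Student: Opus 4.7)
The plan is to construct, for each $i \geq 0$, a filtration $\F(i)$ on $Cone(f) = \C_1[-1] \oplus \C_2$ that places the cross-map $f$ at order exactly $i$, and then to deduce all the stated consequences from the resulting identification $E_i(Cone(f),\F(i)) \cong Cone(f_i)$.

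First, define $\F(i)$ by giving the $\C_1$-summand the shifted filtration $\F_1 + i$ and the $\C_2$-summand its original filtration $\F_2$. Since $f$ is filtered, it decomposes as $f = f^0 + f^1 + \cdots$ with $f^j$ lowering filtration by $j$; correspondingly, inside the cone differential $D = \partial_1 \oplus \partial_2 + f$ the piece $f^j$ now lowers $\F(i)$ by $i+j$, while the internal differentials decompose as $\partial_k = \sum_j \partial_k^j$ with $\partial_k^j$ still lowering $\F(i)$ by $j$. Thus for $0 \leq j < i$ the order-$j$ part of $D$ is the diagonal $\partial_1^j \oplus \partial_2^j$ with no contribution from $f$, while at order $i$ the piece $f^0$ enters alongside $\partial_1^i \oplus \partial_2^i$.

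Because there are no cross-terms between the two summands through order $i-1$, the spectral sequence of $(Cone(f),\F(i))$ up to page $i$ splits as the direct sum of the (suitably shifted) spectral sequence of $\C_1$ and that of $\C_2$; hence $E_i(Cone(f),\F(i)) \cong E_i(\C_1)[-1] \oplus E_i(\C_2)$ with internal differentials $d_i^{\C_1}$ and $d_i^{\C_2}$. The order-$i$ piece of $D$ contributes an additional cross-map induced by $f^0$, and since the higher pieces $f^j$ for $j \geq 1$ strictly decrease filtration and therefore vanish on every page of the spectral sequence, this cross-map coincides with $f_i$. This identifies $(E_i(Cone(f),\F(i)), d_i)$ with $Cone(f_i)$ as chain complexes.

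The remaining consequences follow formally. Over a field, $E_\infty(\C) \cong \mathrm{gr}\,H_*(\C)$ for any bounded filtered complex and $f_\infty$ is the associated graded of $f_*$; since the rank of a filtered linear map of finite-dimensional vector spaces agrees with the rank of its associated graded for bounded filtrations, $\mathrm{rk}\,f_\infty = \mathrm{rk}\,f_*$. If some $f_i$ is an isomorphism of chain complexes, then $Cone(f_i)$ is acyclic, so $E_i(Cone(f),\F(i))$ is acyclic, forcing $E_{i+1}(Cone(f),\F(i))=0$ and hence $E_\infty(Cone(f),\F(i))=0$; boundedness of the filtration then yields $H_*(Cone(f))=0$, i.e., $f$ is a quasi-isomorphism. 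The iterated-isomorphism claim for $f_r$ with $r > i$ is immediate since a bijective chain map induces an isomorphism on homology. The main obstacle in this program is the verification that the cross-map induced on $E_i$ by $f^0$ really equals the spectral-sequence map $f_i$ coming from $f$; this is a standard but slightly fiddly bookkeeping check, tracing the definition of $f_r$ through successive lifts of cycles modulo boundaries at each page and confirming that the strictly filtration-decreasing pieces $f^j$ for $j \geq 1$ contribute nothing.
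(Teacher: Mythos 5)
Your construction of $\F(i)$ --- shifting the filtration on the $\C_1$ summand of $Cone(f)$ up by $i$ and keeping $\F_2$ on the $\C_2$ summand --- is exactly the standard device the Fact is invoking (the paper itself offers no detailed proof beyond the assertion), and your splitting of the cone's spectral sequence through page $i$, the identification $E_i(Cone(f),\F(i))\cong Cone(f_i)$, and the acyclicity argument giving the quasi-isomorphism statement are all correct. The check you defer as ``fiddly'' is in fact easier than you suggest: the cross term of the cone differential is all of $f$, not just $f^0$; since $f$ carries the approximate cycles $Z_r^p(\C_1)$ into $Z_r^p(\C_2)$ and by definition $f_i[x]=[f(x)]$, the identification of the cross-map with $f_i$ is immediate, with the strictly filtration-dropping pieces $f^j$, $j\ge 1$, simply dying in the relevant subquotient.

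The genuine gap is in your justification of the sentence about ranks. The lemma you appeal to --- that the rank of a filtered linear map of finite-dimensional vector spaces equals the rank of its associated graded --- is false: take $\C_1=\mathbb{F}\langle x\rangle$ and $\C_2=\mathbb{F}\langle y\rangle$ with zero differentials, $\F_1(x)=1$, $\F_2(y)=0$, and $f(x)=y$; then $f$ is filtered and $f_*$ is an isomorphism, yet the associated graded map, and hence $f_\infty$, is zero. The same example shows what actually follows from $E_i(Cone(f),\F(i))\cong Cone(f_i)$ is only the inequality $\rk f_\infty \le \rk f_*$: with the shifted filtration the cone's spectral sequence can support nontrivial differentials beyond page $i+1$, induced by the strictly filtration-dropping components of $f$, and these account for the discrepancy between $\dim E_{i+1}(Cone(f),\F(i))$ and $\dim H_*(Cone(f))$. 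Equality of ranks needs extra input (for instance, vanishing of those later differentials, or the special situation in the paper's applications where the source complex sits entirely in the lowest filtration level of the target), and it cannot be obtained by the argument you give --- nor, for that matter, in the full generality in which that sentence of the Fact is phrased. The part of the Fact the paper heavily relies on, namely that an isomorphism on some $E_i$ page forces $Cone(f)$ to be acyclic and $f$ to be a quasi-isomorphism, you do establish correctly.
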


\subsection{Framed Floer Homology and the $\varepsilon$-filtration}
Now we define the filtration on $\C^\circ(\hyperbox,\Lambda)$ that we would like to study.  

\begin{definition}
Let $\C$ be an $n$-dimensional hypercube of chain complexes.  The \emph{$\varepsilon$-filtration} on $\C$ is defined by 
\[
\F(x) = n - \| \varepsilon \| \text{ for } x \in \C^{\varepsilon}.
\]
The induced spectral sequence is called the {\em $\varepsilon$-spectral sequence}.  
\end{definition} 

The {\em depth} of a filtered complex is the largest difference in the filtration levels of two non-zero elements.  If $k$ is greater than the depth of the filtration, the $k$th differential in the spectral sequence, $d_k$, must vanish. Therefore, the induced spectral sequence from the $\varepsilon$-filtration has all differentials $d_k$ vanish for $k$ greater than the dimension of the hypercube.  

\begin{definition}
Define an {\em $\varepsilon$-filtered quasi-isomorphism} to be an $\varepsilon$-filtered chain map between the total complexes of two hypercubes of chain complexes which induces isomorphisms between the $E_1$ pages of the $\varepsilon$-spectral sequence.  It is necessarily a quasi-isomorphism on the total complexes by Fact~\ref{filteredisoisiso}.
\end{definition}
Given a hypercube of chain complexes, $\C$, the notation $E_i(\C)$ will always refer to the $\varepsilon$-filtration, unless noted otherwise.  In this setting, there is always a canonical isomorphism between $(E_0(\C),d_0)$ and $(\C,\partial)$, so we will often not distinguish between the two.    

\begin{definition}
Choose a complete system $\hyperbox$ for a framed link $(L,\Lambda)$ in $Y$.  The \emph{Framed Floer complex} is the $(E_1,d_1)$ complex of the $\varepsilon$-spectral sequence for $\widehat{\C}(\hyperbox,\Lambda)$.  The {\em Framed Floer homology} is the homology of the Framed Floer complex, or equivalently, the $E_2$ page of the $\varepsilon$-spectral sequence.  These are denoted by $\widehat{FFC}(\hyperbox,\Lambda)$ and $\widehat{FFH}(\hyperbox,\Lambda)$ respectively, again omitting the underlying three-manifold.  
\end{definition}

\begin{remark}
Since $\widehat{\C}(\hyperbox,\Lambda)$ is defined by choosing some $U_i$ and setting it to 0, we have to be careful here.  We need to show, similar to the link surgery theorem, the independence of this choice of $i$ to make Framed Floer homology well-defined.  This issue will be addressed in Proposition~\ref{variableactions} for $\widehat{\C}(\hyperbox,\Lambda)$ and Theorem~\ref{largesurgeries} for $\widehat{\mathfrak{A}}$-complexes.  Thus, we will suppress which $U_i$ we are setting to 0.  
\end{remark}

\begin{remark}
Due to the splitting of $\C^\circ(\hyperbox,\Lambda,\mathfrak{s})$ over Spin$^c$ structures on $Y_\Lambda(L)$, there is a splitting 
\[
\widehat{FFH}(L,\Lambda) = \bigoplus_{\mathfrak{s} \in \text{Spin}^c(Y_{\Lambda}(L))} \widehat{FFH}(L,\Lambda,\mathfrak{s}).
\]
\end{remark}

As we will be constantly working with the $\varepsilon$-spectral sequence, let's study the $E_1$ page for $\C^\circ(\hyperbox,\Lambda)$.  This is given by 
\[
E^j_1 = \bigoplus_{|M| = n-j} \; \prod_{\mathbf{s} \in \mathbb{H}(L)} H_*(\A^\circ(\hyperbox^{L-M},\psi^{M}(\mathbf{s})),\partial_{\psi^M(\mathbf{s})}),
\]
since $\| \varepsilon(M) \| = |M|$.  

Notice that with any non-empty link, the $E_1$ page will be infinitely generated.  However, there are some special cases where this will not happen when restricted to a single Spin$^c$ structure; these cases are usually well-suited for computations.  In fact, $\C^\circ(\hyperbox,\Lambda,\mathfrak{s})$ will be finitely generated if and only if $\Lambda$ is identically 0 (all pairwise linking numbers are 0 and each component has framing 0).

\begin{remark} \label{knotsvanish}
Because of the depth, if $\hyperbox$ is a complete system for a knot $K$ in $Y$, we have 
\[
\widehat{FFH}(\hyperbox,\lambda,\mathfrak{s}) = E_2(\widehat{C}(\hyperbox,\lambda,\mathfrak{s})) \cong E_{\infty}(\widehat{C}(\hyperbox,\lambda,\mathfrak{s})) \cong \widehat{HF}(Y_\lambda(K),\mathfrak{s})
\]
for all $\lambda \in \mathbb{Z}$ and $\mathfrak{s} \in \text{Spin}^c(Y_\lambda(K))$.  This relies on the fact that $\widehat{\C}(\hyperbox,\lambda,\mathfrak{s})$ is defined over a field.  The final isomorphism above does not necessarily hold for all flavors.  An example where they disagree is $FFH^-(T_L,-1)$, the $-$ flavor of the Framed Floer homology of $-1$-surgery on the left-handed trefoil in $S^3$ (this is an exercise for the reader with the integer surgery formula for knots, Theorem 1.1 of \cite{hfkz}).  For this reason, we do not study $FFH^{\pm}$.  On the other hand, we expect that $FFH^\infty(\hyperbox,\Lambda)$ is completely determined by $H_1(Y;\mathbb{Z})$ (see Lemma 5.1 of \cite{hftriple} for the case of torsion Spin$^c$ structures when $\Lambda \equiv \mathbf{0}$).  For these reasons, we restrict to the case of $\widehat{FFH}$ for this paper.
\end{remark}

\section{What is Framed Floer Homology?}\label{cobordismrephrasesection}
We now take a moment to sketch a more geometric interpretation of what Framed Floer homology really is.  For more details on the constructions in this section, see Section 10 of \cite{hflz}.  

In order to prove the link surgery theorem, Manolescu and Ozsv\'ath work heavily with a special kind of complete system called a \emph{basic system} which they construct for any oriented link $L$ (Section 6.7 in \cite{hflz}).  We will make use of them in this section and Section~\ref{kunnethsection}, but only review the properties relevant for our discussion.  Heegaard diagrams in a basic system will have exactly one $z$ basepoint on any given link component and there are the same number of $w$ basepoints as components of $L$.  The diagram $\hyperbox^L$ in a basic system is always maximally colored, so it will have exactly $\ell$ colors.  Recall that a maximally colored Heegaard diagram for $L$ always has $\ell \leq p$.  Thus, basic systems are defined over the ``smallest" base rings possible for a maximally colored diagram, $\mathbb{F}[[U_1,\ldots,U_\ell]]$.  Finally, for $M \subseteq L$, the Heegaard diagram $\hyperbox^M$ is obtained from $\hyperbox^L$ by removing $z$ basepoints.  In particular, there is a natural identification between the generators of $\mathfrak{A}^\circ(\hyperbox^L,\mathbf{s})$ and $\mathfrak{A}^\circ(\hyperbox^M,\mathbf{s'})$ for any $\mathbf{s}$ and $\mathbf{s'}$.


From the definitions, it may seem as though Framed Floer homology is an arbitrary object to study.  However, the objects involved in Framed Floer homology are actually comprised of elementary pieces of the Heegaard Floer homology package.  We now give a suitable rephrasing of Framed Floer homology in terms of these more familiar pieces.  Recall that the chain groups for $\widehat{FFC}$ are comprised of terms of the form  $H_*(\widehat{\mathfrak{A}}(\hyperbox^{L-L'},\psi^{L'}(\mathbf{s})),\partial_{\psi^{L'}(\mathbf{s})})$.    

\begin{theorem}[Theorem 10.1 in \cite{hflz}]\label{largesurgeries} 
For each $\mathbf{s} \in \mathbb{H}(L)$, for sufficiently large framings $\widetilde{\Lambda}$ such that there there exists a quasi-isomorphism of complexes of $\mathbb{F}[[U_1,\ldots,U_n]]$-modules  
\[
\mathbf{CF}^\circ(Y_{\widetilde{\Lambda}|_{L'}}(L),\psi^{L'}([\mathbf{s}])) \cong \mathfrak{A}^\circ(\hyperbox^{L-L'},\psi^{L'}(\mathbf{s})),
\]
where $\mathbf{CF}^\circ(Y_{\widetilde{\Lambda}|_{L'}}(L'),\psi^{L'}([\mathbf{s}]))$ is the Floer chain complex for a multi-pointed Heegaard diagram for $Y_{\widetilde{\Lambda}|_{L'}}(L')$ with Spin$^c$ structure obtained by restricting the Spin$^c$ structure $[\mathbf{s}]$ on $Y_{\widetilde{\Lambda}}(L)$.  Here, all $U_i$'s act the same on $\mathbf{CF}^\circ$.  
\end{theorem}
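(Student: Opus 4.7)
The plan is to generalize the Ozsv\'ath--Szab\'o large surgery theorem from knots to links, where the key geometric input is winding $\alpha$-curves sufficiently many times around the link components to model a large framing. First, I would start with a fixed multi-pointed Heegaard diagram $\hyperbox^{L}$ for $L$ and modify it into a multi-pointed Heegaard diagram for $Y_{\widetilde{\Lambda}|_{L'}}(L')$ as follows: for each component $L_i \subseteq L'$, replace the meridional $\alpha$-curve associated to $L_i$ by a framing curve $\alpha_i'$ obtained from the meridian by $N_i$ Dehn twists around a longitude representing the framing $\widetilde{\Lambda}_i$, where the $N_i$ will be chosen large. The $z$-basepoints corresponding to components of $L'$ become ordinary $w$-basepoints in the surgered diagram.

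Next, I would set up a bijection of generators. Near each winding region for a component $L_i$, the intersections of $\alpha_i'$ with the $\beta$-curves occur in a tower indexed by integers, and the relative Alexander grading $A_i$ translates into this integer parameter. Restricting to the $\text{Spin}^c$ structure $\psi^{L'}([\mathbf{s}])$ on $Y_{\widetilde{\Lambda}|_{L'}}(L')$ isolates, for each choice of an old intersection point, exactly one new intersection point in the winding region whose Alexander grading lifts to $\mathbf{s}$ in $\mathbb{H}(L)$. This identifies the generators with those of $\mathfrak{A}^\circ(\hyperbox^{L-L'},\psi^{L'}(\mathbf{s}))$.

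Then I would match the differentials through a neck-stretching / winding argument. As $N_i \to \infty$, holomorphic disks in the surgered Heegaard diagram degenerate into disks in the original diagram $\hyperbox^{L}$ together with model holomorphic strips living in the winding regions. The model strips contribute exactly the multiplicities needed to produce the weights $U_{\tau_i}^{E^{i}_{s_i}(\phi)}$: the expression $(A_i(x)-s_i)\vee 0 - (A_i(y)-s_i)\vee 0$ is precisely the net count of times a disk's boundary wraps past the basepoints that were created when $z_i$ was converted to a $w$-basepoint, and the standard basepoint multiplicities $n_{w_j}(\phi)$ survive unchanged. One must also verify that the $U$-action of all variables on homology agrees, which should follow from the analogous statement for the surgered diagram together with the fact that, after surgery, the basepoints on the $L'$-components lie in a single connected piece of the diagram complement.

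The main obstacle is the disk correspondence in step three: establishing that for sufficiently large $N_i$ no spurious holomorphic disks appear outside the model family, and that Maslov indices, $\text{Spin}^c$ decompositions, and basepoint multiplicities all behave compatibly. The multi-component case is especially delicate because one must perform the winding analysis simultaneously across several winding regions whose boundary orbits could in principle interact; handling this likely requires an inductive argument on $|L'|$ together with a gluing/stretching degeneration argument adapted from the single-knot case, ensuring that disks which wind around several components factor as products of their elementary pieces in a controlled way.
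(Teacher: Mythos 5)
You should first note that the paper does not prove this statement at all: it is imported verbatim as Theorem 10.1 of \cite{hflz}, so the only meaningful comparison is with the Manolescu--Ozsv\'ath proof. Your sketch has the right geometric ingredients from that proof --- replacing the curves for the components of $L'$ by curves wound many times in a winding region, identifying the generators in the fixed Spin$^c$ structure $\psi^{L'}([\mathbf{s}])$ with those of $\hyperbox^{L-L'}$ via the tower of intersection points, and interpreting the exponents $E^i_{s_i}(\phi)$ as the multiplicities picked up at the basepoints created when $z_i$ is converted to a $w$-basepoint.

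The genuine gap is in your third step. The theorem is a statement about one fixed, sufficiently large framing, and what is needed is a chain map between the two complexes; your proposal instead tries to obtain the result by letting $N_i \to \infty$ and matching the differentials outright under the generator bijection. That matching is not true for any fixed large framing: the differentials of $\mathbf{CF}^\circ$ of the large surgery and of $\mathfrak{A}^\circ(\hyperbox^{L-L'},\psi^{L'}(\mathbf{s}))$ agree only up to correction terms, and a limiting ``no spurious disks'' statement does not by itself produce a map for the finite framing the theorem concerns. The argument of \cite{hflz} (following the Ozsv\'ath--Szab\'o large surgery theorem for knots) instead defines an explicit $\mathbb{F}[[U_1,\ldots,U_n]]$-equivariant map by counting holomorphic triangles (more generally polygons, in the complete-system setting) against canonical top-degree cycles in a multi-diagram relating $\hyperbox^{L}$ to the surgered diagram, and proves it is a quasi-isomorphism by a filtration argument: with respect to the winding-region identification of generators and an area/energy filtration, the leading term of the triangle map is the nearest-point bijection and every other contribution strictly drops filtration, so the map is invertible. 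This filtration argument is precisely what replaces your unproved disk-correspondence claim, and it also treats all the winding regions simultaneously, so no induction on $|L'|$ or delicate multi-region degeneration analysis is required. Finally, the clause that all $U_i$ act identically on homology is a separate standard basepoint-moving/connectedness fact and should be quoted as such rather than folded into the winding analysis.
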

Thus, the terms in the $E_1$ page of the $\varepsilon$-spectral sequence are determined by the Heegaard Floer homologies of large surgeries on $L-L'$ in various Spin$^c$ structures.    

\begin{remark}\label{variablesacttrivially}
By Theorem~\ref{largesurgeries}, all of the $U_i$ variables must act the same on $H_*(\mathfrak{A}^\circ)$, since they do on Heegaard Floer homology.  Theorem 4.10 in \cite{hflz} tells us that the stable isomorphism-type of $H_*(\widehat{\mathfrak{A}})$ is independent of the colored Heegaard diagram chosen.  Therefore, for any complete system, we will have that all $U_i$ act the same on $H_*(\mathfrak{A}^\circ)$.  Thus, the isomorphism-type of  $H_*(\widehat{\mathfrak{A}})$ is independent of the variable we are quotienting by.  Finally, we note that $H_*(\widehat{\mathfrak{A}})$ is always finite-dimensional over $\mathbb{F}$.  
\end{remark}

Now, to study the $d_1$ differential, observe that the terms in the differential on $\mathcal{C}$ which lower the $\varepsilon$-filtration by precisely 1 are the maps $\Phi^{\pm K}_\mathbf{s}$.  Therefore, the $d_1$-differential of some $[(\mathbf{s},x)] \in H_*(\mathfrak{A}^\circ(\hyperbox^{L-L'},\psi^{L'}(\mathbf{s})),\partial_{\psi^{L'}(\mathbf{s})})$ is given by 
\[
d_1(\mathbf{s},[x]) = \sum_{\vec{K} \subseteq L-L'} (\mathbf{s} + \Lambda_{\vec{K},\vec{L}}, (\Phi^{\vec{K}}_{\psi^{L'}(\mathbf{s})})_*([x])),
\]
where we are summing over all orientations of the individual components of $L-L'$.

We would now like to rephrase the entire $(E_1,d_1)$ complex in this setting.  

Suppose that $W$ is a cobordism from $Y$ to $Y'$ equipped with a Spin$^c$ structure $\mathfrak{t}$ such that $\mathfrak{t}|_Y = \mathfrak{s}$ and $\mathfrak{t}|_{Y'} = \mathfrak{s}'$.  In \cite{hfsmooth4}, Ozsv\'ath and Szab\'o construct a map
\[
f^\circ_{W,\mathfrak{t}}:\mathbf{CF}^\circ(Y,\mathfrak{s}) \longrightarrow \mathbf{CF}^\circ(Y',\mathfrak{s}'),  
\]
which induces a map on homology, $F^\circ_{W,\mathfrak{t}}$.  
Observe that given a two-handle addition from $Y$ to $Y_n(K)$, we may flip the direction and reverse orientation of this cobordism to obtain a cobordism $W:Y_n(K) \longrightarrow Y$.  We will call this a {\em reversed 2-handle addition} cobordism.  

\begin{proposition}[Theorem 10.2 of \cite{hflz}]\label{mappingconeidentification} Let $L' \subseteq L$ and choose $\vec{K}$ an oriented component of $L'$.  Fix $\mathbf{s} \in \mathbb{H}(L')$.  For $\widetilde{\Lambda} >> 0$, there exists a Spin$^c$ structure $\mathfrak{t}_{\vec{K}}$ on the reversed 2-handle addition cobordism 
\[
W : Y_{\widetilde{\Lambda}|_{L'}}(L') \longrightarrow Y_{\widetilde{\Lambda}|_{L'-K}}(L'-K)
\] 
extending $[\mathbf{s}]$ and $\psi^{\vec{K}}([\mathbf{s}])$, such that there exists an $\varepsilon$-filtered quasi-isomorphism between the one-dimensional hypercubes of chain complexes 
\[
\xymatrix{
\mathbf{CF}^\circ(Y_{\widetilde{\Lambda}|_{L'}}(L'),[\mathbf{s}]) \ar[r]^{\hspace{-.7cm} f^\circ_{W,\mathfrak{t}_{\vec{K}}}} & \mathbf{CF}^\circ(Y_{\widetilde{\Lambda}|_{L'-K}}(L'-K),[\psi^{\vec{K}}(\mathbf{s})])
}
\]
and
\[
\xymatrix{
\mathfrak{A}^\circ(\hyperbox^{L'},\mathbf{s}) \ar[r]^{\hspace{-.6cm}\Phi^{\vec{K}}_{\mathbf{s}}} & \mathfrak{A}^\circ(\hyperbox^{L'-K},\psi^{\vec{K}(\mathbf{s})}).
}
\]
\end{proposition}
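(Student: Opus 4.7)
The plan is to construct the desired quasi-isomorphism in two stages: first identify the vertex complexes via the large surgery theorem, and then match the horizontal maps, for a carefully chosen Spin$^c$ structure, under this identification. Since $\vec{K}$ has a single component, I would first unpack $\Phi^{\vec{K}}_{\mathbf{s}} = \destab^{\vec{K}}_{p^{\vec{K}}(\mathbf{s})} \circ \incl^{\vec{K}}_{\mathbf{s}}$, where $\incl^{\vec{K}}$ multiplies each generator by a power of $U_K$ recording its Alexander discrepancy from $s_K$, and $\destab^{\vec{K}}$ counts holomorphic triangles in the triple Heegaard diagram supplied by the complete system.

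Next I would invoke Theorem~\ref{largesurgeries} at both endpoints to produce vertical quasi-isomorphisms
\[
\mathbf{CF}^\circ(Y_{\widetilde{\Lambda}|_{L'}}(L'),[\mathbf{s}]) \xrightarrow{\sim} \A^\circ(\hyperbox^{L'},\mathbf{s})
\]
and similarly for the target, built out of the multi-pointed Heegaard diagrams underlying the complete system, with $\boldbeta$ curves stretched enough to witness the large framing. The key geometric observation is that the triple diagram defining $\destab^{\vec{K}}$ (on $\boldalpha$, the $\boldbeta$ for $L'$, and the $\boldbeta$ for $L'-K$) is, once unfolded through the large-surgery identification, precisely the standard triple diagram for the Ozsv\'ath--Szab\'o two-handle cobordism which removes the surgery on $K$ --- namely $W$. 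Thus, on the nose, destabilization computes a cobordism map; only the Spin$^c$ decoration remains to be pinned down.

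Then I would identify the data of the inclusion with a choice of Spin$^c$ structure on $W$. Extensions of the two boundary Spin$^c$ structures are parametrized by their evaluation on the core of the two-handle, and the Ozsv\'ath--Szab\'o cobordism map sorts the triangle counts in the triple diagram according to this evaluation. The Alexander weighting $U_K^{(A_K(x)-s_K)\vee 0}$ appearing in $\incl^{\vec{K}}$ is precisely the weighting that isolates the triangles representing one such Spin$^c$ class; for $\widetilde{\Lambda} \gg 0$ the large-surgery identification of $[\mathbf{s}]$ with a Spin$^c$ structure on the boundary makes exactly one such class contribute, which I take to be $\mathfrak{t}_{\vec{K}}$. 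Under this matching the composition $\destab^{\vec{K}} \circ \incl^{\vec{K}}$ becomes $f^\circ_{W,\mathfrak{t}_{\vec{K}}}$ at the chain level.

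Finally I would assemble the data into a one-dimensional hypercube quasi-isomorphism by exhibiting a chain homotopy filling the square; this is an associativity-of-triangles count in a suitable quadruple diagram, in parallel with the argument used for the surgery exact triangle. The vertex maps are quasi-isomorphisms by Theorem~\ref{largesurgeries}, so by Fact~\ref{filteredisoisiso} the total map is a quasi-isomorphism, and because the $\varepsilon$-filtration on a one-dimensional hypercube is determined purely by which vertex a chain occupies, any vertex-preserving map is automatically $\varepsilon$-filtered. The main obstacle I anticipate is the middle step of Spin$^c$ bookkeeping: reconciling the multi-pointed conventions of the complete system with the standard single-basepoint Spin$^c$ assignment on $W$, and verifying that the $U_K$-weighting in $\incl^{\vec{K}}$ truly records the Chern-class evaluation of $\mathfrak{t}_{\vec{K}}$ on the two-handle core, with $\mathfrak{t}_{\vec{K}}$ the unique class extending the pair $([\mathbf{s}], [\psi^{\vec{K}}(\mathbf{s})])$ that survives in the $\widetilde{\Lambda} \gg 0$ regime.
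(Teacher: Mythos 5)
The paper itself gives no proof of this statement: it is imported wholesale as Theorem 10.2 of \cite{hflz}, so there is no in-paper argument to compare yours against. On its own terms, your outline does follow the strategy of the cited source --- identify the vertex complexes with Floer complexes of large surgeries via Theorem~\ref{largesurgeries}, then match $\Phi^{\vec{K}}_{\mathbf{s}} = \destab^{\vec{K}}_{p^{\vec{K}}(\mathbf{s})}\circ\incl^{\vec{K}}_{\mathbf{s}}$ with the two-handle cobordism map in a particular Spin$^c$ structure, fill the square with a homotopy, and invoke Fact~\ref{filteredisoisiso}; your closing observations (vertex-preserving maps are automatically $\varepsilon$-filtered on a one-dimensional hypercube, and quasi-isomorphisms at the vertices promote to a filtered quasi-isomorphism) are correct.

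As a proof, however, the proposal has a genuine gap exactly where you flag the ``main obstacle.'' The assertion that the triple diagram computing $\destab^{\vec{K}}$ is ``on the nose'' the standard Ozsv\'ath--Szab\'o triple diagram for $W$, with the $U$-weighting of $\incl^{\vec{K}}$ isolating the triangles in the class $\mathfrak{t}_{\vec{K}}$, is precisely the content of Theorem 10.2 of \cite{hflz}, not a literal coincidence of diagrams: the destabilization map is built from the complete system's handleslide and isotopy data relating $r_{\vec{K}}(\hyperbox^{L'})$ to $\hyperbox^{L'-K}$, whereas the cobordism map uses a triple diagram subordinate to the two-handle attachment, and reconciling the two (and verifying that the Alexander cutoff in $\incl^{\vec{K}}$ corresponds to the Chern-class evaluation that pins down $\mathfrak{t}_{\vec{K}}$, uniformly in $\mathbf{s}$ for $\widetilde{\Lambda} \gg 0$) is the substantial comparison carried out in Section 10 of \cite{hflz}. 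Deferring that step makes your write-up a plan for re-deriving the cited theorem rather than a self-contained proof; within this paper the appropriate move is simply to cite \cite{hflz}, as the author does.
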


This tells us in fact that our $d_1$ differential, after the identifications of the above discussion, is given by the induced maps on Heegaard Floer homology arising from the cobordism maps obtained by a reversed 2-handle addition.  Furthermore, after appropriate truncations (see the proof of Proposition 10.10 in \cite{hflz}), Manolescu and Ozsv\'ath extend the isomorphisms of Proposition~\ref{mappingconeidentification} to be compatible in the following sense.  While there are quasi-isomorphisms between the mapping cones $\Phi^{\pm K}$ and $f_{W,\mathfrak{t}_{\pm K}}$, one requires them to extend to the mapping cones of $\Phi^{+K} + \Phi^{-K}$ and $f^\circ_{W,\mathfrak{t}_{K}} + f^\circ_{W,\mathfrak{t}_{-K}}$.  These quasi-isomorphisms are also compatible after summing over various values of $\mathbf{s}$ and different choices of $K$.  

To quickly summarize, modulo some truncations, the chain groups of the Framed Floer complex are Heegaard Floer homologies of large surgeries on sublinks and the differential is given by cobordism maps.  We also point out that Framed Floer homology can be computed by only counting bigons and triangles.  

This is in fact what one sees in the construction of the spectral sequence from the reduced Khovanov homology of a link to the Heegaard Floer homology of the double-branched cover of its mirror.  The Heegaard Floer homology is the homology of a hypercube of chain complexes, but when looking at the $E_1$ page, one has a complex determined solely by the Heegaard Floer homologies of certain surgeries whose differential is  determined by 2-handle cobordism maps.

\section{Framed Link Invariance of $\widehat{FFH}$}\label{linkinvariancesection}
In our definition of Framed Floer homology, there were essentially two choices for input.  We would like to show that the isomorphism-type is independent of these choices.  The first is the complete system that we use to construct $\C^-(\hyperbox,\Lambda)$.  The other choice is the $U_i$ variable used to form $\widehat{\C}(\hyperbox,\Lambda) = \C^-(\hyperbox,\Lambda)/U_i \cdot \C^-(\hyperbox,\Lambda)$.  We show the independence of variable choice first.  

\subsection{Independence of the Variable in Framed Floer Homology}
In order to do this, we must understand what happens in general when we take a hypercube of chain complexes and set $U_i$ to 0.  Let's work with hypercubes of chain complexes, $\C$, defined over $\mathbb{F}[[U_1,\ldots,U_n]]$.  Recall that $E^j_r(\C)$ consists of the elements in filtration level $j$.  In particular, $d_1 : E^j_r(\C) \longrightarrow E^{j-1}_r(\C)$.   Remember that the $(E_0,d_0)$-complex for $\C$ consists of the vertex complexes $(\C^\varepsilon,\partial)$.  We use the notation 
\[
D^1(x) = \sum_{\| \varepsilon \| = 1} \destab^{\varepsilon}(x), \text{ for } x \in \C,  
\]     
since these are the terms that are underlying the $d_1$ differentials.  For convenience, we will use $d_0$ and $d_1$ as the differentials in the $\varepsilon$-spectral sequence for any hypercube of chain complexes - it will be clear from the context which hypercube we will be working with.  

Note that quotienting $\C$ by the ideal generated by $U_i$ is still naturally a hypercube of chain complexes.  If $\C$ is a hypercube of chain complexes of dimension $q$, the $\varepsilon$-filtration can be extended to the mapping cone for $U_j : \C \longrightarrow \C$, denoted $Cone(U_j)$, in the obvious way such that the depth is still $q$.    

We will denote elements in the mapping cone complex as pairs $< x,x' >$ to distinguish them from the pair $(x,x')$ living in the hypercube $\C \oplus \C$.  Finally, $\tilde{x}$ means the class of the cycle $x$ in $E_1(C/U_i)$, and thus $d_1(\tilde{x}) = \widetilde{D^1(x)}$.  We instead use $[< x,x' >]$ to represent an element of $E_1(Cone(U_j))$.        

The following is similar to Lemma 10.12 in \cite{hflz}.  

\begin{lemma}\label{variableactionsgeneral}
Suppose that $\C$ is a complex of free $\mathbb{F}[[U_1,\ldots,U_n]]$-modules and fix indices $i$ and $j$ between $1$ and $n$.  Then the following hold: \\
1) there are $\varepsilon$-filtered quasi-isomorphisms between the hypercubes of $\mathbb{F}$-chain complexes $\C/(U_i,U_j)$ and $Cone(U_j:\C/(U_i) \longrightarrow \C/(U_i))$, \\
2) if $U_j$ acts trivially on $E_1(\C/(U_i))$, there is an $\mathbb{F}$-vector space isomorphism between $E_2(\C/(U_i,U_j))$ and $E_2(\C/(U_i)) \otimes_{\mathbb{F}} H^*(S^1)$.   
\end{lemma}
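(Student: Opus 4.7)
The plan is to treat part (1) as an algebraic fact about mapping cones and short exact sequences, and part (2) as a computation with the long exact sequence on each page of the $\varepsilon$-spectral sequence. We will always assume $i \neq j$, since otherwise the statements degenerate (for example, $U_i : \C/U_i \to \C/U_i$ is the zero map, and then $Cone$ is just $\C/U_i \oplus \C/U_i[1]$).

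For part (1), the natural candidate for the $\varepsilon$-filtered quasi-isomorphism is the projection
\[
\phi : Cone(U_j : \C/U_i \to \C/U_i) \longrightarrow \C/(U_i, U_j),
\]
sending $\langle x, x' \rangle$ to the class of $x'$ in $\C/(U_i,U_j)$. This is a chain map, and since the $U$-variables do not affect the $\varepsilon$-filtration, $\phi$ preserves the extension of the $\varepsilon$-filtration to $Cone(U_j)$. By Fact~\ref{filteredisoisiso}, it suffices to show that $\phi$ is a quasi-isomorphism on each vertex. Here I would use that $\C$ is free over $\mathbb{F}[[U_1,\ldots,U_n]]$ together with the assumption $i \neq j$, which implies that $(\C/U_i)^{\varepsilon}$ is free over $\mathbb{F}[[U_j]]$; in particular multiplication by $U_j$ is injective, and one gets a short exact sequence of $\mathbb{F}$-chain complexes
\[
0 \longrightarrow (\C/U_i)^{\varepsilon} \xrightarrow{U_j} (\C/U_i)^{\varepsilon} \longrightarrow (\C/(U_i,U_j))^{\varepsilon} \longrightarrow 0.
\]
The usual identification of the cone of an injection with its cokernel (via the associated LES) then gives vertex-wise quasi-isomorphism and hence the filtered quasi-isomorphism.

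For part (2), I would combine (1) with the fact that $E_2$ is preserved by $\varepsilon$-filtered quasi-isomorphisms, reducing the computation to that of $E_2(Cone(U_j))$. The LES associated to the vertex-wise mapping cone of $U_j$ takes the form
\[
\cdots \to H_n\bigl((\C/U_i)^{\varepsilon}\bigr) \xrightarrow{U_j} H_n\bigl((\C/U_i)^{\varepsilon}\bigr) \to E_1(Cone(U_j))^{\varepsilon} \to H_{n-1}\bigl((\C/U_i)^{\varepsilon}\bigr) \xrightarrow{U_j} \cdots,
\]
and the hypothesis that $U_j$ acts trivially on $E_1(\C/U_i)$ collapses this into short exact sequences, yielding an $\mathbb{F}$-vector space identification $E_1(Cone(U_j)) \cong E_1(\C/U_i) \oplus E_1(\C/U_i)[1]$. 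Since the higher hypercube maps $D^{\varepsilon'}$ on $Cone(U_j)$ act diagonally on the two copies of $\C/U_i$, the induced $d_1$ differential splits as $d_1 \oplus d_1$, and hence
\[
E_2(Cone(U_j)) \cong E_2(\C/U_i) \oplus E_2(\C/U_i) \cong E_2(\C/U_i) \otimes_{\mathbb{F}} H^*(S^1).
\]

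The main subtlety I expect is that the splitting of the LES on $E_1$ is a priori non-canonical as a vector space decomposition, so one must verify that it can be chosen compatibly with $d_1$. This reduces to the observation that the $U_j$ map is the \emph{only} nondiagonal piece of the cone differential, and that it lives entirely in the vertex (i.e.\ $d_0$) direction; the hypercube maps $D^{\varepsilon'}$ with $\|\varepsilon'\| \geq 1$ on $Cone(U_j)$ are literally diagonal in the two factors. Hence the splitting of $E_1$ does intertwine with $d_1$, and the conclusion of part (2) follows once the splitting from the LES is appropriately lifted.
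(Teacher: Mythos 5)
Your part (1) is correct and is essentially the paper's own argument: the paper uses the same projection (kill the first factor of the cone, quotient the second), and your vertex-level justification — freeness of $(\C/U_i)^{\varepsilon}$ over the quotient ring, so that $U_j$ is injective and the cone maps quasi-isomorphically onto its cokernel — is a reasonable filling-in of the step the paper leaves to the reader. Your remark that $i \neq j$ is implicitly required is also consistent with how the lemma is applied.

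The gap is in part (2), at exactly the point you flag and then dismiss. After reducing to $E_2(Cone(U_j))$, the vector-space identification $E_1(Cone(U_j)) \cong E_1(\C/U_i) \oplus E_1(\C/U_i)$ requires, for each class $\tilde{x}$ coming from the quotient copy, a choice of lift $x'$ with $d_0 x' = U_j x$; with respect to an arbitrary such choice the induced $d_1$ is a priori only triangular: $d_1[\langle x, x'\rangle] = [\langle D^1 x, D^1 x'\rangle]$, and the second component need not agree with the chosen lift of $D^1 x$, so there is a possible correction term $[\langle 0, D^1 x' + \tau(D^1 x)\rangle]$ landing in the sub copy. Equivalently, the two filtered maps $\C/U_i \to Cone(U_j)$ (inclusion of the second factor) and $Cone(U_j) \to \C/U_i$ (projection to the first factor) exhibit $(E_1(Cone(U_j)),d_1)$ only as an extension of $(E_1(\C/U_i),d_1)$ by itself, and the resulting connecting map $E_2(\C/U_i) \to E_2(\C/U_i)$ is a secondary operation (built from $D^1$ applied to the lifts, plus terms $U_j c$ with $d_0 c = D^1 x$) which the diagonality of the maps $D^{\varepsilon'}$ with $\|\varepsilon'\| \geq 1$ does not by itself force to vanish. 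This is precisely where the paper's proof does its real work: it chooses a basis of $E_1(\C/U_i)$ adapted to $d_1$ (a basis of $\Ker d_1$ extended level by level, with the images $d_1\tilde{x}$ taken as basis vectors one filtration level down), fixes cycle representatives $x$, and defines the lift $\tau$ on image classes by $\tau(D^1 x) := D^1(\tau(x))$ — using $d_0(D^1\tau(x)) = U_j D^1 x$ — so that the explicit map $\Delta(\tilde{y}_1,\tilde{y}_2) = [\langle y_1, \tau(y_1)+y_2\rangle]$ is a bijective chain map. Without constructing such a compatible system of lifts (or otherwise proving the connecting map vanishes), the assertion that ``the splitting of $E_1$ does intertwine with $d_1$'' is unjustified, and the conclusion $E_2(Cone(U_j)) \cong E_2(\C/U_i) \otimes_{\mathbb{F}} H^*(S^1)$ does not follow from what you wrote.
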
  
\begin{proof}
We first prove $1)$.  There is a chain map $q$ from $Cone(U_j)$ to $\C/(U_i,U_j)$ given by sending the first $\C/(U_i)$ factor to 0 and applying the obvious quotient map from the second factor of $\C/(U_i)$ to $\C/(U_i,U_j)$, namely sending each copy of $\mathbb{F}[[U_1,\ldots,U_n]]/U_i$ in $\C/(U_i)$ to the quotient $\mathbb{F}[[U_1,\ldots,U_n]]/(U_i,U_j)$.  This is clearly $\varepsilon$-filtered, so it suffices to check that it induces $\mathbb{F}$-isomorphisms on the $E_1$ pages.  However, this is now a question about chain complexes in general since we just want a quasi-isomorphism on the level of the vertex complexes.  The reader can check that the map $q$ between chain complexes $Cone(U_j)$ and $C/(U_i,U_j)$ is always a quasi-isomorphism between vertex complexes.      

To prove part $2)$, it now suffices to establish the splitting 
\[
E_2(Cone(U_j)) \cong E_2(\C/(U_i)) \oplus E_2(\C/(U_i)). 
\]
We will define an explicit $\mathbb{F}$-linear bijective chain map 
\[
\Delta:(E_1(\C/(U_i)) \oplus E_1(\C/(U_i)),d_1 \oplus d_1) \longrightarrow (E_1(Cone(U_j)),d_1).  
\]
Choose an $\mathbb{F}$-basis for $E_1(\C/(U_i))$ as follows.  First, pick a basis for the kernel of $d^n_1:E^n_1(\C/(U_i)) \longrightarrow E^{n-1}_1(\C/(U_i))$, denoted $\{{\tilde{x}^-}_\alpha\}$.  Choose $\{{\tilde{x}^+}_\alpha\}$ to extend $\{{\tilde{x}^-}_\alpha\}$ to a basis for $E^n_1(\C/(U_i))$.  Now, we would like to build the basis for $E^{n-1}_1(\C/(U_i))$.  We choose a basis for the image of $d^n_1:E^n_1(\C/(U_i)) \longrightarrow E^{n-1}_1(\C/(U_i))$ by simply applying $d^n_1$ to the basis $\{{\tilde{x}^+}_\alpha\}$.  We now want to extend to a basis for $E^{n-1}_1(\C/(U_i))$.  This is done by first extending  $\{d^n_1({\tilde{x}^+}_\alpha)\}$ to a basis for the kernel of $d^{n-1}_1:E^{n-1}_1(\C/(U_i))\longrightarrow E^{n-2}_1(\C/(U_i))$.  Again, we extend to the rest of $E^{n-1}_1$.  This process can be repeated to obtain a basis for all of $E_1(\C/(U_i))$, since the kernel of $d^i_1$ contains the image of $d^{i+1}_1$.  After completing this process, the elements of the basis for $E_1(\C/(U_i))$ will be denoted by $\tilde{x}$ and $d_1(\tilde{x})$ accordingly.  

To make things particularly explicit, we choose a representative $x$ from each $\tilde{x}$ and then use $D^1(x)$ as the corresponding representative of $d_1(\tilde{x})$.  We extend linearly to obtain representatives for each element of $E_1(\C/(U_i))$.  In particular, if $\tilde{x} = 0$, we have that $x = 0$.  We will use $y$ to denote a linear combination of basis vectors of type $x$ and of type $D^1(x)$; thus, for each element of $E_1(\C/(U_i))$, we have a unique representative of the form $y$.  

Now we are ready to analyze $E_1(Cone(U_j))$.  For each representative $x$ of $\tilde{x}$ we have chosen, define $\tau(x)$ to be some choice of $x'$ such that $d_0(x') = U_j \cdot x$.  We can do this since $U_j \cdot E_1(\C/(U_i)) = 0$ by assumption, and thus $U_j \cdot x$ must be a boundary.  Furthermore, we define $\tau(D^1(x))$ to be $D^1(\tau(x))$.  Note that 
\[
d_0(D^1(\tau(x))) = D^1(d_0(\tau(x))) = D^1(U_j \cdot x) = U_j \cdot D^1(x),  
\]
so $\tau(D^1(x))$ has the desired property.  Here we are making use of the fact that $D^1$ and $d_0 = \partial$ commute in a hypercube of chain complexes.  
Again, we extend $\tau$ linearly.  It is clear by construction that the elements $< y_1,\tau(y_1) +y_2>$, where each $y_i$ is one of the chosen representatives in $E_1(\C_i)$, are cycles in $E_0(Cone(U_j))$.  Define the map $\Delta$ from $E_1(\C_i) \oplus E_1(\C_i)$ to $E_1(Cone(U_j))$ by 
\[
\Delta: (\tilde{y_1},\tilde{y_2}) \mapsto [< y_1, \tau(y_1) + y_2 >],
\] 
where we define the map via the representatives $y$ that we have chosen for the classes in $E_1(\C_i)$; it is well-defined since there is precisely one $y$ for each class.  By construction, this map is linear.  

We can also see that it is injective.  This is simply because if $[< y_1,\tau(y_1) + y_2 >]$ is a trivial class, $y_1$ must be a boundary.  Therefore, by the linearity of our constructions, $y_1$ and $\tau(y_1)$ must be 0.  In particular, $y_2$ must also be a boundary.  

For surjectivity, we would like to see that we can express every element of  $E_1(Cone(U_j))$ in the form $[< y_1,\tau(y_1) + y_2 > ]$.  Choose a cycle $< a_1,a_2 >$.  In particular, $a_1$ is a cycle, so it is homologous in $E_1(\C_i)$ to an element $y_1$, say $a_1 + y_1 = d_0(b)$.  Thus, $< a_1, a_2 >$ is homologous to $< y_1, a_2 + U_j \cdot b >$, since the difference is $d_0 < b,0 > = < d_0(b), U_j \cdot b >$.  This can be decomposed into a sum of $< y_1,\tau(y_1) >$ and $< 0, a_2 + U_j \cdot b + \tau(y_1) >$.  It suffices to prove that $a_2 + U_j \cdot b + \tau(y_1)$ is a $d_0$-cycle and thus $< 0, a_2 + U_j \cdot b + \tau(y_1) >$ will be homologous to some $< 0, y_2 >$ in $E_1(Cone(U_j))$.  However, we can chase the definitions to see that $d_0(a_2) = U_j \cdot a_1$ and $d_0(U_j \cdot b) = U_j \cdot(a_1 + y_1)$ and $d_0(\tau(y_1)) = U_j \cdot y_1$.  Clearly this sums to 0.  Therefore, $\Delta$ is an isomorphism of $\mathbb{F}$-vector spaces.  

The lemma will be complete if we can show 
\[
\Delta(d_1(\tilde{y_1},\tilde{y_2})) = d_1(\Delta(\tilde{y_1}, \tilde{y_2})),
\]   
since a bijective chain map is a quasi-isomorphism.  This is simply a matter of unpacking the definitions in the constructions. 
\begin{align*}
\Delta(d_1(\tilde{y_1},\tilde{y_2})) =& \Delta[(\widetilde{D^1(y_1)},\widetilde{D^1(y_2)})] \\=& [ < D^1(y_1), \tau(D^1(y_1)) + D^1(y_2) > ] \\
=& [ < D^1(y_1), D^1(\tau(y_1)) + D^1(y_2) > ] \\=&d_1([< y_1, \tau(y_1) + y_2 > ]) \\=& d_1(\Delta(\tilde{y_1},\tilde{y_2})). \qedhere
\end{align*} 
\end{proof}

\begin{proposition}\label{variableactions}
The isomorphism-type of Framed Floer homology is independent of the choice of variable we set to 0.  
\end{proposition}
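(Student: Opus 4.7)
The plan is to deduce the proposition from Lemma~\ref{variableactionsgeneral} by comparing $\widehat{\C}(\hyperbox,\Lambda) = \C^-(\hyperbox,\Lambda)/(U_i)$ with its $U_j$-analogue through the common double quotient $\C^-(\hyperbox,\Lambda)/(U_i,U_j)$, and then exploiting the symmetry of this ideal in $i$ and $j$.

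First, I would verify the hypothesis of Lemma~\ref{variableactionsgeneral}(2), namely that $U_j$ acts trivially on $E_1(\C^-(\hyperbox,\Lambda)/(U_i))$. The $E_1$ page is assembled from summands of the form $H_*(\widehat{\mathfrak{A}}(\hyperbox^{L-M},\psi^M(\mathbf{s})))$ with $\widehat{\mathfrak{A}} = \mathfrak{A}^-/(U_i)$. Since $U_i$ already acts by zero on $\widehat{\mathfrak{A}}$ and Remark~\ref{variablesacttrivially} ensures that every $U_k$ acts the same way on these homology groups, every $U_j$ acts by zero on $E_1$, so the hypothesis is met. Applying Lemma~\ref{variableactionsgeneral}(2) once with the pair $(i,j)$ and once with $(j,i)$ then yields $\mathbb{F}$-vector space isomorphisms
\[
\widehat{FFH}^{(i)}(\hyperbox,\Lambda) \otimes H^*(S^1) \;\cong\; E_2(\C^-(\hyperbox,\Lambda)/(U_i,U_j)) \;\cong\; \widehat{FFH}^{(j)}(\hyperbox,\Lambda) \otimes H^*(S^1),
\]
where the superscript records which $U$-variable has been set to zero, and the middle identification is simply the equality of ideals $(U_i,U_j) = (U_j,U_i)$.

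To conclude, I would cancel the $H^*(S^1)$-factor. Over the field $\mathbb{F}$, $H^*(S^1) \cong \mathbb{F} \oplus \mathbb{F}$, so tensoring with it just doubles dimensions in each graded piece; the Spin$^c$-splitting of $\widehat{\C}(\hyperbox,\Lambda,\mathfrak{s})$ is $\varepsilon$-filtered and is preserved by both the quasi-isomorphism $q$ and the bijection $\Delta$ built in the proof of Lemma~\ref{variableactionsgeneral}, so the cancellation can be carried out Spin$^c$-summand by Spin$^c$-summand to give $\widehat{FFH}^{(i)}(\hyperbox,\Lambda,\mathfrak{s}) \cong \widehat{FFH}^{(j)}(\hyperbox,\Lambda,\mathfrak{s})$ as $\mathbb{F}$-vector spaces. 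The main obstacle I foresee is really only the triviality of the $U_j$-action at $E_1$, which in turn rests on the stable-invariance statement for the $\widehat{\mathfrak{A}}$-complexes from Theorem 4.10 of \cite{hflz}; once that input is in place, the rest of the argument is the purely formal symmetry-and-cancellation pattern described above.
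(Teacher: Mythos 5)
Your proposal is correct and follows essentially the same route as the paper: verify via Remark~\ref{variablesacttrivially} that all $U_j$ act trivially on $E_1(\C^-(\hyperbox,\Lambda,\mathfrak{s})/(U_i))$, apply Lemma~\ref{variableactionsgeneral} to the common double quotient $\C^-(\hyperbox,\Lambda,\mathfrak{s})/(U_i,U_j)$ with the roles of $i$ and $j$ exchanged, and cancel the $H^*(S^1)$ factor. Your extra remarks on the Spin$^c$-splitting and the cancellation step only make explicit what the paper leaves implicit.
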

\begin{proof}
Fix a complete system $\hyperbox$ for $L$ with $p$ colors and consider the complex $\C^-(\hyperbox,\Lambda,\mathfrak{s})$.  If $p=1$, then there is only one possible $U$ to set to 0; therefore, assume $p \geq 2$.  Fix $i$ with $1 \leq i \leq p$.  We will use $\widehat{FFH}(\hyperbox,\Lambda,\mathfrak{s},i)$ to denote the Framed Floer homology where we set $U_i$ to 0; this is the $E_2$ page of the quotient complex $\C^-(\hyperbox,\Lambda,\mathfrak{s})/(U_i)$.  
Note that $E_1(\C^-(\hyperbox,\Lambda,\mathfrak{s})/(U_i))$ is comprised of the modules  $H_*(\mathfrak{A}^-(\hyperbox^{L'},\mathbf{s})/(U_i))$.  By Remark~\ref{variablesacttrivially}, all $U_j$ act by 0 on $E_1(\C^-(\hyperbox,\Lambda,\mathfrak{s})/(U_i))$.  We may thus apply Lemma~\ref{variableactionsgeneral} to see that 
\[
E_2(\C^-(\hyperbox,\Lambda,\mathfrak{s})/(U_i,U_j)) \cong \widehat{FFH}(\hyperbox,\Lambda,\mathfrak{s},i) \otimes H^*(S^1). 
\]  
However, we could switch $i$ and $j$ in the construction and obtain 
\[
E_2(\C^-(\hyperbox,\Lambda,\mathfrak{s})/(U_i,U_j)) \cong \widehat{FFH}(\hyperbox,\Lambda,\mathfrak{s},j) \otimes H^*(S^1).
\]
Doing this construction for all $j$ shows that the Framed Floer homologies are independent of the variable we set to 0.  
\end{proof}

\subsection{Invariance of Framed Floer Homology for Complete Systems}
To complete the proof of Theorem~\ref{invariancetheorem}, we simply repeat the proof of invariance of complete systems for the link surgery formula (Theorem 7.7 of \cite{hflz}), but keeping track of the $\varepsilon$-filtration.  The reader is strongly encouraged to read the original proof of Manolescu-Ozsv\'ath if they are interested in the details of the invariance of Framed Floer homology.  One advantage of the Framed Floer homology setting is that we do not need to use vertical truncations as in their proof since this is only necessary for relating the actions of the different $U_i$ variables, which we have done in Proposition~\ref{variableactions}.  It is important to point out that we did have to use their invariance theorem to obtain Proposition~\ref{variableactions}, and thus we have subtly relied on vertical truncations for this proof. 

Before proceeding, we need another filtered algebra lemma.

\begin{lemma}\label{minusgiveshat} Suppose that $\C$ is a hypercube of chain complexes over $\mathbb{F}[[U_1,\ldots,U_n]]$.  Let $f:\C_1 \longrightarrow \C_2$ be an $\varepsilon$-filtered chain map which induces isomorphisms between $E_1(\C_1)$ and $E_1(\C_2)$.  Then, $f$ induces isomorphisms between $E_2(\C_1/(U_i))$ and $E_2(\C_2/(U_i))$ for any $i$.     
\end{lemma}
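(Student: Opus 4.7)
My plan is to work entirely at the $E_1$ page: I will show that $f$ induces an isomorphism $E_1(\C_1/(U_i)) \to E_1(\C_2/(U_i))$, from which the conclusion for $E_2$ follows automatically. Indeed, since $f$ is $\varepsilon$-filtered, the map it induces on $E_1$ is a chain map with respect to $d_1$, and a bijective chain map induces an isomorphism on the homology of $(E_1,d_1)$, which is precisely $E_2$.

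Unpacking the hypothesis, the fact that $f$ is $\varepsilon$-filtered means the $E_1$-decomposition $E_1(\C_j) = \bigoplus_\varepsilon H_*(\C_j^\varepsilon,\partial)$ is preserved, so the assumption is equivalent to the vertex-wise maps $f^\varepsilon_*: H_*(\C_1^\varepsilon) \to H_*(\C_2^\varepsilon)$ being isomorphisms for every $\varepsilon$. To pass to vertex homologies of $\C/(U_i)$, I will apply the short exact sequence of vertex complexes
\[
0 \longrightarrow \C_j^\varepsilon \xrightarrow{\; U_i \;} \C_j^\varepsilon \longrightarrow \C_j^\varepsilon/(U_i) \longrightarrow 0,
\]
which is valid because each vertex complex is free as an $\mathbb{F}[[U_i]]$-module in the link surgery setup. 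The associated long exact sequence in homology splices into universal-coefficient-style short exact sequences
\[
0 \longrightarrow H_n(\C_j^\varepsilon)/(U_i) \longrightarrow H_n(\C_j^\varepsilon/(U_i)) \longrightarrow H_{n-1}(\C_j^\varepsilon)[U_i] \longrightarrow 0,
\]
where $[U_i]$ denotes the $U_i$-torsion subgroup. Because $f$ is $\mathbb{F}[[U_1,\ldots,U_n]]$-linear it commutes with multiplication by $U_i$, so these sequences are natural with respect to $f^\varepsilon$; the outer terms are isomorphisms by hypothesis, hence the five-lemma provides an isomorphism on the middle term for every $\varepsilon$, yielding the desired isomorphism on $E_1(\C/(U_i))$.

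The main subtlety to watch is the freeness of the vertex complexes over $\mathbb{F}[[U_i]]$, which is needed to produce the short exact sequence above; this is implicit in the Manolescu--Ozsv\'ath setup but should be flagged explicitly. Beyond this point the argument is a routine naturality and five-lemma diagram chase, together with the formal fact that an isomorphism at one page of a spectral sequence (via a filtered chain map) propagates to all subsequent pages.
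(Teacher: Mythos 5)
Your argument is correct and essentially the paper's: both rest on the short exact sequence $0 \to \C_j \xrightarrow{\,U_i\,} \C_j \to \C_j/(U_i) \to 0$ (your universal-coefficient-style sequences are just the spliced form of the resulting long exact sequence on $E_1$ pages), use naturality of $f$ and the five lemma to get an isomorphism on $E_1(\C/(U_i))$, and then promote this to $E_2$ exactly as in Fact~\ref{filteredisoisiso}. One small caution: an $\varepsilon$-filtered map need not respect the decomposition into individual vertices within a fixed filtration level, so the hypothesis is an isomorphism on each filtration level of $E_1$ rather than vertex-wise; running your five-lemma step per filtration level (where the same freeness over $\mathbb{F}[[U_i]]$ and naturality hold) repairs this without changing anything of substance.
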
 
\begin{proof}
For $j = 1,2$, consider the $\varepsilon$-filtered short exact sequences 
\[
\xymatrix{
0 \ar[r] & \C_j^- \ar[r]^{U_i} & \C_j^- \ar[r] & \C_j/(U_i) \ar[r] & 0.
}
\]
We now obtain long exact sequences on the $E_1$ pages.  The 5-lemma implies that $f$ induces an isomorphism between $E_1(\C_1/(U_i))$ and $E_1(\C_2/(U_i))$.  Since $f$ was filtered, Fact~\ref{filteredisoisiso} completes the proof.
\end{proof}

In light of Lemma~\ref{minusgiveshat}, we will work with $\C^-(\hyperbox,\Lambda)$ instead of $\widehat{\C}(\hyperbox,\Lambda)$.  Recall that stable isomorphism means an isomorphism up to tensoring with factors of $H^*(S^1)$.

\begin{proof}[Proof of Theorem~\ref{invariancetheorem}]
Fix a basic system, $\hyperbox_b$, for $L$ and a Spin$^c$ structure, $\mathfrak{s}$, on $Y_{\Lambda}(L)$.  This will be our reference system as everything else will be compared to it.  We first consider the case of a complete system, $\hyperbox$, where $\hyperbox^L$ is maximally colored.  Denote the number of $z$ basepoints on $\hyperbox^L$ by $m$ and the number of $w$ basepoints by $k$; finally, $\ell$ is the number of components of $L$.  For convenience, we are assuming that $L_1 \subseteq L$ is colored by $U_1$ in all of the complete systems that we are comparing and thus $\widehat{\C}$ will always be $\C^-/U_1 \cdot \C^-$ in this proof.  By Proposition~\ref{variableactions}, this is sufficient.   

The idea is that we can relate $\hyperbox_b$ to $\hyperbox$ by a sequence of moves on complete systems (isotoping the Heegaard diagrams, changing the coloring, etc.).  Each complete system move induces a map on the respective surgery formulas, and the composition gives an explicit map from $\C^-(\hyperbox,\Lambda,\mathfrak{s})$ to  
\[
\C^-(\hyperbox_b,\Lambda,\mathfrak{s}) \otimes H^*(S^1)^{\otimes m-\ell}.
\]
We will study the $\varepsilon$-filteredness of each move, which also induce maps on the level of $\widehat{\C}$.  

By Lemma~\ref{minusgiveshat}, it suffices to establish that each step gives a stable isomorphism between the $E_1$ pages of the surgery formulas that we are comparing.  
  
To change $\hyperbox_b$ to $\hyperbox$, we must first make it so that we have the same number of $z$ and $w$ basepoints as well as the same number of colors.  This involves two different types of stabilization moves.  We first apply $k-m$ \emph{neo-chromatic free index 0/3 stabilizations} to $\hyperbox_b$ to obtain a complete system $\overline{\hyperbox_b}$.  A free index 0/3 stabilization consists of altering each Heegaard diagram in $\hyperbox_b$ as in Figure~\ref{freestabilization}. 

\begin{figure}
\labellist
\large
\pinlabel $\Sigma$ at 67 95
\pinlabel $\Sigma$ at 510 95
\pinlabel $\alpha$ at 550 215
\pinlabel $\beta$ at 750 215
\pinlabel $w$ at 650 165
\endlabellist

\begin{center}
\includegraphics[scale=.36]{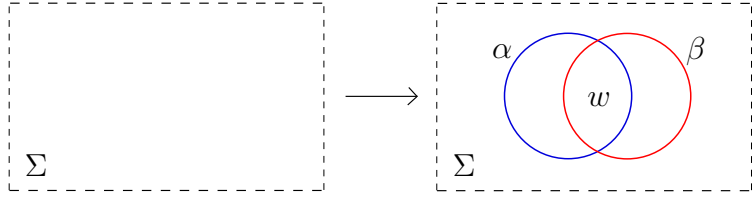}
\end{center}
\caption{A free index 0/3 stabilization} \label{freestabilization}
\end{figure}
Being neo-chromatic means that each $w$ that we are adding gets its own color.  Now there will be $\ell + k - m = p$ colors.  These are the only moves that will change the base rings which our complexes are defined over.  However, we still only have $\ell + k - m$ basepoints of type $w$ and $\ell$ basepoints of type $z$, which does not match with $\hyperbox$.    

To fix this, we do $m-\ell$ \emph{index 0/3 link stabilizations}.  An index 0/3 link stabilization consists of applying Figure~\ref{linkstabilization} to each Heegaard diagram in the complete system.  The coloring agrees with that on the original Heegaard diagrams and is determined for the new basepoints, since they lie on the same component as the $z_i$.  This move takes us from $\overline{\hyperbox_b}$ to the next complete system, $\widetilde{\hyperbox_b}$.  We will come back to study these two stabilization moves later.     

\begin{figure}
\labellist
\large
\pinlabel $\Sigma$ at 67 95
\pinlabel $\Sigma$ at 510 95
\pinlabel $\alpha$ at 550 215
\pinlabel $\beta$ at 750 215
\pinlabel $z$ at 149 165
\pinlabel $z$ at 585 165
\pinlabel $w'$ at 650 168
\pinlabel $z'$ at 716 168
\endlabellist

\begin{center}
\includegraphics[scale=.36]{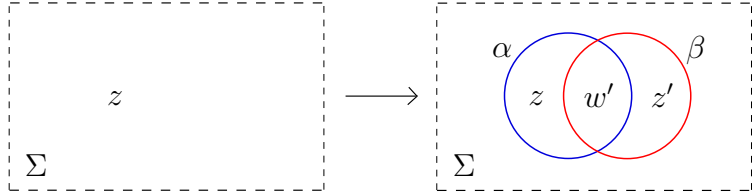}
\end{center}
\caption{An index 0/3 link stabilization} \label{linkstabilization}
\end{figure}

When we define our Floer complexes for $\widetilde{\hyperbox_b}$, these will also be defined as $\mathbb{F}[[U_1,\ldots,U_p]]$-modules, since its colorings have $p$ colors.  By Proposition 6.29 in \cite{hflz}, there is now a sequence of moves (isotopies, handleslides, global shifts, etc.) relating $\widetilde{\hyperbox_b}$ to $\hyperbox$ which will not change the colorings of the complete systems and thus not affect the base ring.  Let's study the effects of these moves.  Note that we may need to relabel the basepoints, but this is hardly a concern.  

We can consider the various standard Heegaard moves on the level of complete systems, such as surface isotopies, handleslides, and stabilizations; these induce maps of the form 
\[
\mathfrak{A}^-(\hyperbox^M, \mathbf{s}) \longrightarrow \mathfrak{A}^-(\widetilde{\hyperbox_b}^{M-M'},\psi^{\vec{M'}}(\mathbf{s})) \otimes H^*(S^1)^{\otimes m-\ell} \text{ for } \vec{M'} \subseteq M, 
\]
which implies they are $\varepsilon$-filtered.  Each move induces stable  quasi-isomorphisms on the $(E_0,d_0)$ complexes.  For example, Theorem 4.10 in \cite{hflz} shows that surface isotopies induce triangle-counting chain homotopy equivalences of complexes of $\mathbb{F}[[U_1,\ldots,U_p]]$-modules from $\mathfrak{A}^-(\hyperbox^M,\mathbf{s})$ to $\mathfrak{A}^-(\widetilde{\hyperbox_b}^M,\mathbf{s})$.  Again, Fact~\ref{filteredisoisiso} implies these give isomorphisms at all subsequent pages of the spectral sequence.  

The final set of moves necessary to relate the hyperboxes of Heegaard diagrams arise from changing the underlying hyperboxes of Heegaard diagrams, such as their size.  These are global shifts and elementary enlargements/contractions.  Lemmas 6.15 and 6.16 in \cite{hflz} give the desired quasi-isomorphisms, which again preserve the $\varepsilon$-filtration and induce quasi-isomorphisms on the associated graded level.    

By Lemma~\ref{minusgiveshat}, we have shown that $\widehat{FFH}(\hyperbox,\Lambda,\mathfrak{s})$ and $\widehat{FFH}(\widetilde{\hyperbox_b},\Lambda,\mathfrak{s})$ are isomorphic as $\mathbb{F}[[U_1,\ldots,U_p]]$-modules.  What remains is to understand the effect of the stabilizations that went from $\hyperbox_b$ to $\widetilde{\hyperbox_b}$.  We want to show that this is an isomorphism on the $E_1$ pages of $\C^-$ after tensoring with some factors of $H^*(S^1)$.  The result will then follow from Lemma~\ref{minusgiveshat}.    

First, we study the neo-chromatic free index 0/3 stabilizations from $\hyperbox_b$ to $\overline{\hyperbox_b}$.  We would like to think of $\C^-(\hyperbox_b,\Lambda,\mathfrak{s})$ as an $\mathbb{F}[[U_1,\ldots,U_p]]$-module so it can be compared to $\C^-(\overline{\hyperbox_b},\Lambda,\mathfrak{s})$.  This is done as follows.  For each $i > \ell$, in any Heegaard diagram, the basepoint $w_i$ is in the same region as some other $w_j$ after removing the $\alpha$ and $\beta$ curves that were added by the stabilization.  In this case, $U_i$ acts as $U_{\tau(w_j)}$.  Now, Proposition 6.20 and Lemma 10.12 in \cite{hflz} give us the usual filtered map which establishes an isomorphism of $\mathbb{F}[[U_1,\ldots,U_p]]$-modules
\[
E_1(\C^-(\overline{\hyperbox_b},\Lambda,\mathfrak{s})) \cong E_1(\C^-(\hyperbox_b,\Lambda,\mathfrak{s})).
\]     
For index 0/3 link stabilizations, we use the $\varepsilon$-filtered quasi-isomorphism given by Proposition 6.20 in \cite{hflz} inducing   
\[
E_1(\C^-(\widetilde{\hyperbox_b},\Lambda,\mathfrak{s})) \cong E_1(\C^-(\overline{\hyperbox_b},\Lambda,\mathfrak{s})) \otimes H^*(\mathbb{T}^{m-\ell})
\]
as $\mathbb{F}[[U_1,\ldots,U_p]]$-modules.  As a result, we have that $\widehat{FFH}(\hyperbox_b,\Lambda,\mathfrak{s})$ and $\widehat{FFH}(\hyperbox,\Lambda,\mathfrak{s})$ are stably isomorphic; this completes the proof in the case that $\hyperbox^L$ is maximally colored.    

Therefore, assume that $\hyperbox$ is a complete system such that $\hyperbox^L$ is not maximally colored.  There exists a complete system $\hyperbox'$ with $\hyperbox'^L$ maximally colored, such that $\hyperbox$ can be obtained by a sequence of \emph{elementary coloring changes} from $\hyperbox'$.  An elementary coloring change from $\tau$ ($p$ colors) to $\tau'$ ($p-1$ colors) is given by post-composing $\tau$ with a surjective map from $\{1,\ldots,p\}$ to $\{1,\ldots,p-1\}$.  Without loss of generality, we only require one elementary coloring change and that this map sends $p$ to $1$ (if $p$ is sent elsewhere, we instead set that variable to 0 for the definition of $\widehat{\C}$).  We know that on the level of complexes this elementary color change results in setting the variables $U_1$ and $U_p$ to be equal.  This can be reinterpreted as an $\varepsilon$-filtered isomorphism of hypercubes of chain complexes
\[
\C^-(\hyperbox,\Lambda,\mathfrak{s})/(U_1)  \cong \C^-(\hyperbox',\Lambda,\mathfrak{s})/(U_1,U_p). 
\]
We apply Lemma~\ref{minusgiveshat} to see that 
\[
\widehat{FFH}(\hyperbox,\Lambda,\mathfrak{s}) \cong \widehat{FFH}(\hyperbox',\Lambda,\mathfrak{s}) \otimes H^*(S^1).
\]
\end{proof}

Now, the symbols $\widehat{FFH}(L,\Lambda)$ are well-defined up to stable isomorphism for a given oriented, framed link $(L,\Lambda)$ in $Y$.  We will also use the similar notation $\C^\circ(L,\Lambda)$, even though this is only an invariant up to stable {\em quasi}-isomorphism.  Also, the default module structure on Framed Floer homology will always be $\mathbb{F}$.         

\begin{warning}
For the remainder of the paper, unless otherwise specified, complete systems will always have $\hyperbox^L$ maximally colored with exactly one $z$ and one $w$ basepoint for each component and no free $w$ (basic systems satisfy this).  Therefore, the link surgery formula gives an authentic isomorphism of $H_*(\C^\circ(\hyperbox,\Lambda))$ with $\mathbf{HF}^\circ(Y_\Lambda(L))$.  We do this so that we no longer need to keep track of stable isomorphisms (factors of $H^*(S^1)$); we can compare groups directly to see whether or not they are isomorphic to prove our theorems.  We remind the reader that the appropriate statements of the theorems can be adjusted for general complete systems by simply keeping track of the colorings and thus the number of factors of $H^*(S^1)$.  We have done everything prior in this generality so that Framed Floer homology can be calculated combinatorially, which will be discussed later.
\end{warning}

\begin{remark}[Manolescu]\label{framedfilteredinvariants}
It is interesting to point out that the Framed Floer homology groups are in no way canonically isomorphic as we had to make many basis choices in Lemma~\ref{variableactionsgeneral}.  On the other hand, Framed Floer homology does carry additional structure - the proof of Theorem~\ref{invariancetheorem} actually shows that the rank in each filtration level is another collection of invariants of $(L,\Lambda,\mathfrak{s})$.  

One can also use this additional filtered information to distinguish the Framed Floer homologies of framed links that surger to the same manifold.  It would be interesting to see whether two framed links representing homeomorphic three-manifolds can have the same total rank of Framed Floer homology but be distinguished by the filtered information.  We will neither discuss this nor gradings further, so for us, an isomorphism of Framed Floer homologies will just be determined by total rank.    
\end{remark}

\section{The K\"unneth Formula}\label{kunnethsection}
\subsection{A K\"unneth Formula for $\widehat{\mathfrak{A}}$-Complexes}
We now prove the K\"unneth formula for $\widehat{FFH}$.  As a first step, we will establish such a formula for the $\widehat{\mathfrak{A}}$-complexes.  For $i= 1,2$, let $L_i$ be a link in $Y_i$ and choose $\mathbf{s}_i$ in $\mathbb{H}(L_i)$.  Fix maximally colored Heegaard diagrams for $L_i$.  We construct $\hyperbox^{L_i}$ by performing a neo-chromatic free index 0/3 stabilization (see the proof of Theorem~\ref{invariancetheorem}), which adds the basepoint $w^i$ as well as a new variable, $U_n$ for $i=1$ and $U_m$ for $i=2$.   We will denote the Heegaard surfaces by $\Sigma_i$ and the curve sets by $\boldalpha_i$ and $\boldbeta_i$.  To form $\hyperbox^{L_1 \coprod L_2}$, we connect the region of each $\Sigma_i - \boldalpha_i -\boldbeta_i$ containing the $w^i$ by a tube and then delete $w^1$ as in Figure~\ref{kunnethtube}.  
\begin{figure}
\labellist
\small
\pinlabel $\Sigma_1$ at 30 290
\pinlabel $\Sigma_2$ at 457 290
\pinlabel $\Sigma_1\#\Sigma_2$ at 133 125
\pinlabel $\alpha_1$ at 50 210
\pinlabel $\beta_1$ at 178 210
\pinlabel $\alpha_1$ at 140 45
\pinlabel $\beta_1$ at 266 45
\pinlabel $\alpha_2$ at 405 45
\pinlabel $\beta_2$ at 537 45
\pinlabel $\alpha_2$ at 480 210
\pinlabel $\beta_2$ at 607 210
\pinlabel $w^1$ at 115 230
\pinlabel $w^2$ at 545 230
\pinlabel $w^2$ at 472 63
\endlabellist
\begin{center}
\includegraphics[scale=.45]{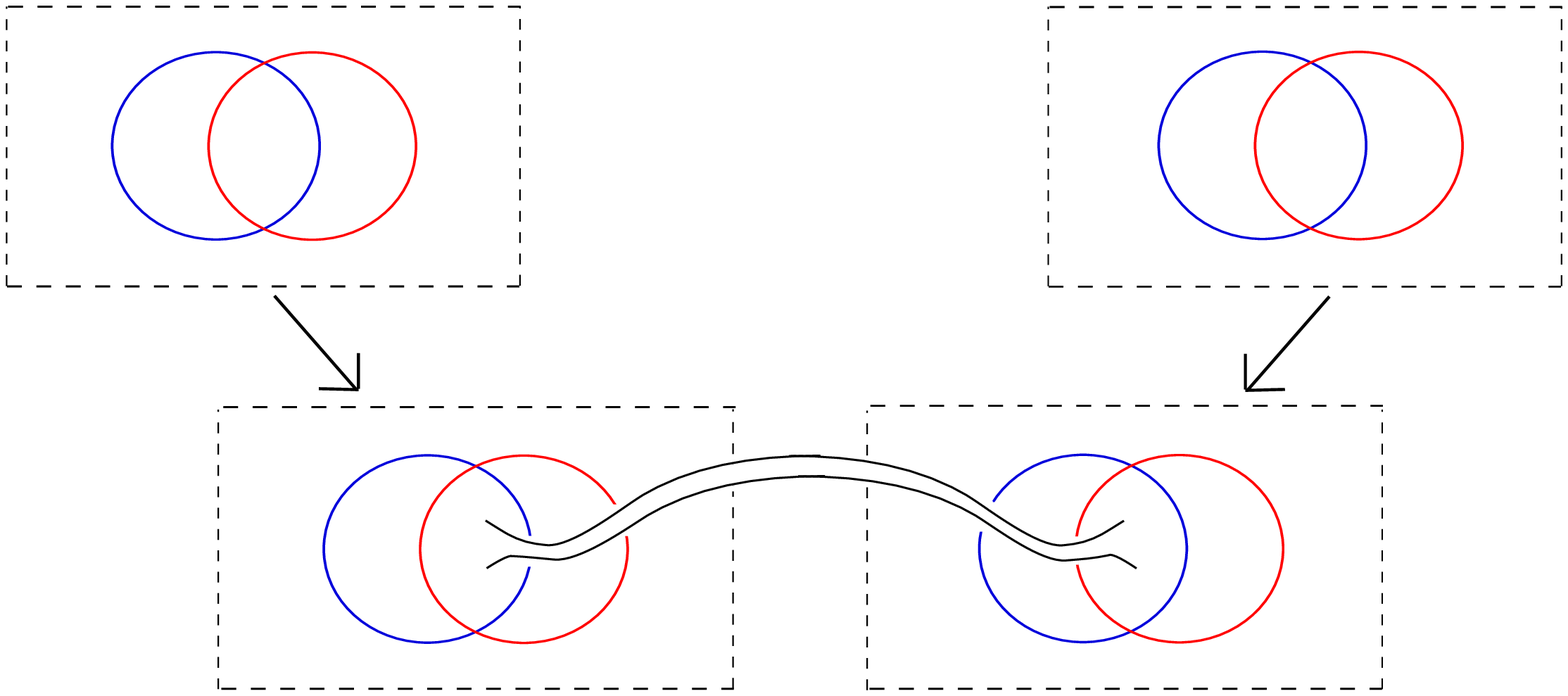}
\end{center}
\caption{Constructing the Heegaard diagram for $\hyperbox^{L_1 \coprod L_2}$} \label{kunnethtube}
\end{figure}
It is easy to see that this forms a suitable link diagram for $L_1 \coprod L_2$.  By keeping $\hyperbox^{L_1\coprod L_2}$ maximally colored, the factors of $H^*(S^1)$ will also remain unchanged.  

For $\widehat{\mathfrak{A}}(\hyperbox^{L_1},\mathbf{s}_1)$, we think of this as a complex of free-modules over $\mathbb{F}[[U_1,\ldots,U_{n-1}]]$, while $\widehat{\mathfrak{A}}(\hyperbox^{L_2},\mathbf{s}_2)$ is a complex over $\mathbb{F}[[U_{n+1},\ldots,U_{m-1}]]$; here we have chosen $U_n$ and $U_m$ to be the variables set to 0 respectively.  We set $\mathbf{s} = (\mathbf{s}_1,\mathbf{s}_2)$ in $\mathbb{H}(L_1 \coprod L_2)$.  We have that $\widehat{\mathfrak{A}}(\hyperbox^{L_1 \coprod L_2},\mathbf{s})$ is naturally a module over $\mathbb{F}[[U_1,\ldots,U_{n-1},U_{n+1},\ldots,U_{m-1}]]$ - we have kept $w^2$ colored by $m$ and set $U_m$ to 0.  We denote this ring by $\mathcal{R}$.   

\begin{lemma}\label{vertexkunneth} Under the above choices, there exists a quasi-isomorphism of $\mathcal{R}$-complexes 
\[
\widehat{\mathfrak{A}}(\hyperbox^{L_1},\mathbf{s}_1) \otimes_{\mathbb{F}} \widehat{\mathfrak{A}}(\hyperbox^{L_2},\mathbf{s}_2) \cong \widehat{\mathfrak{A}}(\hyperbox^{L_1 \coprod L_2},\mathbf{s}).
\]
\end{lemma}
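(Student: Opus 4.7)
The plan is to construct an explicit chain map
\[
\Psi: \widehat{\mathfrak{A}}(\hyperbox^{L_1},\mathbf{s}_1) \otimes_{\mathbb{F}} \widehat{\mathfrak{A}}(\hyperbox^{L_2},\mathbf{s}_2) \longrightarrow \widehat{\mathfrak{A}}(\hyperbox^{L_1 \coprod L_2},\mathbf{s})
\]
defined on generators by $x_1 \otimes x_2 \mapsto x_1 \cup x_2$, and show that it is in fact an isomorphism of chain complexes (and so certainly a quasi-isomorphism). On generators this is a bijection because, after performing the tube connect sum on the Heegaard surface, the $\boldalpha$ and $\boldbeta$ curves of $\hyperbox^{L_1 \coprod L_2}$ lie in disjoint pieces of $\Sigma_1 \# \Sigma_2$, so $\mathbb{T}_{\alpha} \cap \mathbb{T}_{\beta} \cong (\mathbb{T}_{\alpha_1} \cap \mathbb{T}_{\beta_1}) \times (\mathbb{T}_{\alpha_2} \cap \mathbb{T}_{\beta_2})$. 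The absolute Alexander grading $A_i$ on a component $L_i \subseteq L_j$ depends only on basepoints that live on $\Sigma_j$, so the gradings split across the decomposition $\mathbf{s} = (\mathbf{s}_1,\mathbf{s}_2)$, and the weights appearing in the differential can be computed independently on each side.

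The heart of the argument is a neck stretching analysis along the connect sum tube, parallel to the K\"unneth formula for connected sums in the original Ozsv\'ath--Szab\'o work. I would choose a one-parameter family of admissible almost complex structures on $\mathrm{Sym}^{g+k-1}(\Sigma_1 \# \Sigma_2)$ for which the neck length tends to infinity; Gromov compactness then forces Maslov index $1$ holomorphic disks to degenerate into configurations whose projections to the two sides are themselves holomorphic disks. A standard index count together with gluing identifies the rigid $\mu = 1$ moduli space in the stretched diagram with the disjoint union of the $\mu = 1$ moduli spaces on the two sides, matched with constants on the opposite side.

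The key point for the hat flavor is that any holomorphic disk in the connect sum whose projections to both $\Sigma_1$ and $\Sigma_2$ are non-constant must pass through the tube region, which is precisely the region containing the surviving free basepoint $w^2$. Such a disk contributes a positive power of $U_m$ and therefore vanishes after setting $U_m = 0$. Likewise, a disk on the $L_1$ side that would have crossed $w^1$ in $\hyperbox^{L_1}$ now crosses $w^2$ in the joined region and drops out, mirroring the effect of setting $U_n = 0$ in $\widehat{\mathfrak{A}}(\hyperbox^{L_1},\mathbf{s}_1)$. Unpacking the definition of $E^i_{s_i}(\phi)$ on a split disk $\phi_1 \times y_2$ or $x_1 \times \phi_2$, the exponents on the various $U_{\tau_i}$ are exactly those of $\phi_1$ or $\phi_2$ in the corresponding side, and the basepoint contributions from the remaining free $w$ are unchanged, so the weighted count reproduces exactly the tensor product differential.

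The main obstacle will be the analytic input for the neck stretching: transversality for sufficiently stretched almost complex structures and gluing to ensure that the chain-level count in the connect sum agrees on the nose with the tensor product differential, rather than merely up to homotopy. This is the standard connect sum package in Heegaard Floer theory, and in the multi-pointed, colored setting of \cite{hflz} the relevant compactness, transversality, and gluing statements are already in place; the argument here largely amounts to invoking them and tracking the $U$-weight bookkeeping particular to the $\mathfrak{A}$-complex.
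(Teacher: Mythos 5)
Your proposal is correct and, at its core, is the same argument as the paper's: the same map $x_1 \otimes x_2 \mapsto x_1x_2$, the same observation that the surviving free basepoint $w^2$ sits in the connect-sum region so that any contributing disk has $n_{w^2}(\phi)=0$ (else it picks up a power of $U_m=0$), and the same conclusion that the differential splits as $\partial_{L_1}\otimes id + id\otimes\partial_{L_2}$. Where you differ is in how the moduli-space identification is justified: you make neck stretching, Gromov compactness, and gluing the analytic heart, whereas the paper (following the $\widehat{HF}$ K\"unneth formula, Proposition 6.1 of \cite{hfpa}) gets the splitting essentially for free. Once $n_{w^2}(\phi)=0$, positivity of domains forces the domain of $\phi$ to avoid the entire merged region (old $w^1$-region, tube, old $w^2$-region), so the disk lies in the symmetric product of a disjoint union of subsurfaces of $\Sigma_1$ and $\Sigma_2$; hence $\mathcal{M}(\phi)\cong\mathcal{M}(\phi_1)\times\mathcal{M}(\phi_2)$ already for a fixed (split) almost complex structure, no degeneration of the complex structure needed, and the Maslov index one condition forces one factor to be constant, which is exactly the tensor-product differential. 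Your stretched-neck package is the machinery one would need for the minus or infinity flavors, where disks crossing the neck do contribute; for the hat flavor it is more than is required, though not wrong. Your bookkeeping remark that disks formerly crossing $w^1$ now cross the $w^2$-region and die, matching the choice $U_n=0$ on the $L_1$ side, is a correct and worthwhile point that the paper leaves implicit.
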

\begin{proof}
We follow the argument for the proof of the K\"unneth formula for $\widehat{HF}$ (Proposition 6.1 in \cite{hfpa}).  The obvious map 
\[
\widehat{\mathfrak{A}}(\hyperbox^{L_1},\mathbf{s}_1) \otimes_{\mathbb{F}} \widehat{\mathfrak{A}}(\hyperbox^{L_2},\mathbf{s}_2) \longrightarrow \widehat{\mathfrak{A}}(\hyperbox^{L_1 \coprod L_2},\mathbf{s})  
\]
given by sending $\mathbf{x} \otimes \mathbf{y}$ to $\mathbf{xy}$ is an isomorphism of chain complexes.  The basic idea is that since we have added the free basepoint $w^2$ in all of the Heegaard diagrams we are considering, only disks with $n_{w^2}(\phi) = 0$ can contribute to the differential on $\widehat{\mathfrak{A}}(\hyperbox^{L_1},\mathbf{s}_1)$.  In other words, no projection of $\phi$ onto $\Sigma_1 \# \Sigma_2$ crosses the connect-sum region, since otherwise we will pick up a power of $U_m$, which we have set to 0.  The moduli space of holomorphic disks $\phi$ in the symmetric product of $\Sigma$ appearing in $\widehat{\mathfrak{A}}(\hyperbox^{L_1 \coprod L_2},\mathbf{s})$ that have $n_{w^2}(\phi) = 0$, which we denote $\mathcal{M}(\phi)$, is identified with $\mathcal{M}(\phi_1) \times \mathcal{M}(\phi_2)$, for $\phi_1$ and $\phi_2$ living in symmetric products of $\Sigma_1$ and $\Sigma_2$ respectively.  Since our differential only counts disks where the moduli spaces have dimension one, one of the $\mathcal{M}(\phi_i)$ must be constant.  This is exactly saying that the differential splits as $\partial_{L_1,L_2} = \partial_{L_1}\otimes id + id \otimes \partial_{L_2}$.
given by sending $\mathbf{x} \otimes \mathbf{y}$ to $\mathbf{xy}$ is an isomorphism of chain complexes.  The basic idea is that since we have added the free basepoint $w^2$ in all of the Heegaard diagrams we are considering, only disks with $n_{w^2}(\phi) = 0$ can contribute to the differential on $\widehat{\mathfrak{A}}(\hyperbox^{L_1},\mathbf{s}_1)$.  In other words, no projection of $\phi$ onto $\Sigma_1 \# \Sigma_2$ crosses the connect-sum region, since otherwise we will pick up a power of $U_m$, which we have set to 0.  The moduli space of holomorphic disks $\phi$ in the symmetric product of $\Sigma$ appearing in $\widehat{\mathfrak{A}}(\hyperbox^{L_1 \coprod L_2},\mathbf{s})$ that have $n_{w^2}(\phi) = 0$, which we denote $\mathcal{M}(\phi)$, is identified with $\mathcal{M}(\phi_1) \times \mathcal{M}(\phi_2)$, for $\phi_1$ and $\phi_2$ living in symmetric products of $\Sigma_1$ and $\Sigma_2$ respectively.  Since our differential only counts disks where the moduli spaces have dimension one, one of the $\mathcal{M}(\phi_i)$ must be constant.  This is exactly saying that the differential splits as $\partial_{L_1,L_2} = \partial_{L_1}\otimes id + id \otimes \partial_{L_2}$.
\end{proof}

\subsection{Extending the K\"unneth Formula to Hypercubes}
We let $\Lambda$ denote the sum of the two framings, $\Lambda_1$ and $\Lambda_2$, on the framed links $L_1$ and $L_2$ respectively.  

\begin{proof}[Proof of Proposition~\ref{kunnethformula}]
We would like to ``hypercube-ify" the argument of Lemma~\ref{vertexkunneth}.  First, we take basic systems for $L_1$ and $L_2$, and apply free index 0/3 stabilizations at the level of complete systems (see Section 6.8 of \cite{hflz}) as before to obtain $\hyperbox_1$ and $\hyperbox_2$ respectively.  Next, for each Heegaard diagram $\hyperbox_1^{M_1}$ and $\hyperbox_2^{M_2}$, we construct the new Heegaard diagram $\hyperbox_{1,2}^{M_1 \coprod M_2}$ by removing the basepoint in the attaching region on the $L_1$-side and connect sum the two surfaces as in Lemma~\ref{vertexkunneth}; more generally, the complete systems $\hyperbox_1$ and $\hyperbox_2$ can be used to to give a complete system $\hyperbox_{1,2}$ for $L_1 \coprod L_2$.  

By applying the maps defined on the level of $\widehat{\mathfrak{A}}$-complexes in Lemma~\ref{vertexkunneth}, we have chain complex isomorphisms
\[
\widehat{\mathfrak{A}}(\hyperbox^{L'_1}_1,\mathbf{s}_1) \otimes_{\mathbb{F}} \widehat{\mathfrak{A}}(\hyperbox^{L'_2}_2,\mathbf{s}_2) \cong 
\widehat{\mathfrak{A}}(\hyperbox^{L'_1 \coprod L'_2}_{1,2},\mathbf{s}).
\]
This gives an identification
\begin{equation}
\widehat{\C}(\hyperbox_1,\Lambda_1,\mathfrak{s}_1) \otimes_{\mathbb{F}} \widehat{\C}(\hyperbox_2,\Lambda_2,\mathfrak{s}_2) \cong \widehat{\C}(\hyperbox_{1,2},\Lambda,\mathfrak{s}_1 \# \mathfrak{s}_2).
\end{equation}
on the level of $(E_0,d_0)$ complexes, and thus on the $E_1$ page, but the identification does not a priori commute with $d_1$.  Again, we have chosen $U_n$ and $U_m$ to act trivially for $\widehat{\C}(\hyperbox_1,\Lambda,\mathfrak{s}_1)$ and $\widehat{\C}(\hyperbox_2,\Lambda,\mathfrak{s}_2)$ respectively.  

We will now try to promote this identification of $E_1$ terms to a chain complex isomorphism 
\begin{equation}\label{kunnethidentification}
(E_1(\widehat{\C}(\hyperbox_1,\Lambda_1,\mathfrak{s}_1) \otimes_{\mathbb{F}} \widehat{\C}(\hyperbox_2,\Lambda_2,\mathfrak{s}_2)),d_1) \cong (E_1(\widehat{\C}(\hyperbox_{1,2},\Lambda,\mathfrak{s}_1 \# \mathfrak{s}_2)),d_1).
\end{equation}
The first observation is that we may apply the (algebraic) K\"unneth formula to the left-hand side of (\ref{kunnethidentification}) to see that this is isomorphic to the complex 
\begin{equation}\label{kunnethlhs}
(E_1(\widehat{\C}(\hyperbox_1,\Lambda_1,\mathfrak{s}_1)),d_1) \otimes_{\mathbb{F}} (E_1(\widehat{\C}(\hyperbox_2,\Lambda_2,\mathfrak{s}_2)),d_1).
\end{equation}
Therefore, it suffices to show that the $d_1$ differential on the right-hand side of (\ref{kunnethidentification}) respects the tensorial splitting in (\ref{kunnethlhs}).  

Fix a knot $K$ in $L_1$.  We can construct $\hyperbox_{1,2}$ such that the Heegaard moves involved in the destabilization of $K$ will have the following properties.  The first is that all of the Heegaard moves on any of the Heegaard multi-diagrams on $\Sigma_1 \# \Sigma_2$ will consist of moves on the $\Sigma_1$-side and small isotopies on the $\Sigma_2$-side of $\Sigma_1 \# \Sigma_2$.  Furthermore, we can require that these moves on the $\Sigma_1$-side will correspond exactly to the moves for destabilizing $K$ in $\hyperbox_1$.  Finally, by the properties of the basic systems for $L_1$ and $L_2$ (see Section~\ref{cobordismrephrasesection}) we can ensure that all $\hyperbox^M_{1,2}$ are the same Heegaard diagram modulo basepoints.  

By the same arguments as in Lemma~\ref{vertexkunneth}, the differential on $\widehat{\C}(\hyperbox_{1,2},\Lambda,\mathfrak{s}_1 \# \mathfrak{s}_2)$ counts no polygons in Sym$^k(\Sigma_1 \# \Sigma_2)$ with projections to $\Sigma_1 \# \Sigma_2$ passing over the connect-sum regions (or else $n_{w^2}(\psi) > 0$ and we will pick up a power of $U_m = 0$ from this free variable).  Also, if the isotopies on the $\Sigma_2$-side are small enough, there is a nearest-point map (this takes intersection points to the closest neighboring intersection point in the isotoped diagram) which will induce the identity on the $\Sigma_2$-side of the splitting of the $E_1$ terms in (\ref{kunnethidentification}) - this is a combination of the fact that the nearest-point map is a chain complex isomorphism (Lemma 6.2 of \cite{hflz}) and the fact that $\hyperbox_{1,2}^{M}$ and $\hyperbox_{1,2}^{M-K}$ have the same intersection points.  

Since on the $\Sigma_1$-side we are doing the same Heegaard moves we would for destabilizing $K$ in $\hyperbox_1$, we may apply the above discussion to see that the map $(\Phi^{\vec{K}})_*$ on $E_1(\widehat{\C}(\hyperbox_{1,2},\Lambda,\mathfrak{s}_1 \# \mathfrak{s}_2))$ corresponds to $(\Phi^{\vec{K}})_* \otimes id$ on $E_1(\widehat{\C}(\hyperbox_1,\Lambda_1,\mathfrak{s}_1)) \otimes_{\mathbb{F}} E_1(\widehat{\C}(\hyperbox_2,\Lambda_2,\mathfrak{s}_2))$.  The same argument applies for destabilizing components in $L_2$.  As the $d_1$ differential is comprised of the various $(\Phi^{\vec{K}})_*$, the complex $(E_1(\widehat{\C}(\hyperbox_{1,2},\Lambda,\mathfrak{s}_1 \# \mathfrak{s}_2)),d_1)$ is exactly the tensor complex as expected.  This completes the proof.  
\end{proof}

\section{Relating the Surgery Formula to the Invariance of Heegaard Floer Homology}\label{stabilizationsection}

While it is known that Heegaard Floer homology is a three-manifold invariant, it would be insightful to construct a new proof of this result via the link surgery formula.  In other words, we `define' the Heegaard Floer homology of $Y = S^3_{\Lambda}(L)$ to be $H_*(\C^\circ(\hyperbox,\Lambda))$, where $\hyperbox$ is a complete system for $L$.  If $\hyperbox_1$ and $\hyperbox_2$ are complete systems for oriented, framed links $(L_1,\Lambda_1)$ and $(L_2,\Lambda_2)$ representing homeomorphic three-manifolds, then one would need to prove that  $H_*(\C(\hyperbox_1,\Lambda_1)) \cong H_*(\C(\hyperbox_2,\Lambda_2))$.  Why try to do this?

\begin{theorem}[Manolescu-Ozsv\'ath-Thurston \cite{hflzcombo}]\label{combinatorialhf}
Fix a framed, oriented link $(L,\Lambda)$.  There is a complete system of hyperboxes, $\hyperbox$, for $L$ such that the hypercube of chain complexes $\C^\circ(\hyperbox,\Lambda)$ can be computed completely combinatorially for any $\Lambda$.  
\end{theorem}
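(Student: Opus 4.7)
The plan is to build $\hyperbox$ from grid diagrams, which are multi-pointed toroidal Heegaard diagrams for links in $S^3$ whose holomorphic disk counts reduce to counts of empty rectangles. First I would fix a grid diagram presentation $G$ of the oriented link $\vec{L}$, with one $X$ and one $O$ marker on each component. The restriction $r_{\vec{M}}(G)$, obtained by forgetting $X$ or $O$ markers on components of $M$ according to the chosen orientation, is a grid diagram for $L-M$. The vertex complex $\mathfrak{A}^\circ(\hyperbox^{L-M}, \psi^M(\mathbf{s}))$ is then generated over the appropriate power series ring by matchings of horizontal and vertical segments, with differential given by a weighted count of empty rectangles avoiding the markers dictated by $\mathbf{s}$. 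This is manifestly combinatorial, so the entire $(E_0,d_0)$ structure of $\C^\circ(\hyperbox,\Lambda)$ is computable once $\hyperbox$ is in place.

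Next I would construct the complete system explicitly on the same toroidal Heegaard surface. For each pair $(L',\vec{M})$ with $\vec{M} \subseteq L' \subseteq L$, the hyperbox of Heegaard diagrams is assembled by choosing, componentwise for $\vec{M}$, finite sequences of commutation moves and elementary grid (de)stabilizations that transform $r_{\vec{M}}(G)$ into a grid diagram for $L'-M$; these sequences are exactly the bipartition data needed to produce the $\alpha$- and $\beta$-hyperboxes. The identification isotopies needed to paste these hyperboxes into a complete pre-system can be chosen supported in explicit small disks on the torus away from the basepoints, so the homotopical condition of Definition 6.27 in \cite{hflz} becomes a bookkeeping check on basepoint paths.

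With $\hyperbox$ fixed, I would verify that each destabilization map $\destab^{\vec{M}}$ is computed by combinatorial polygon counts. For a single knot component this follows from the grid description of the triangle-counting quasi-isomorphism: triangles on a toroidal grid multi-diagram are exactly empty combinatorial regions of ``trapezoidal'' shape, whose enumeration is algorithmic. For $|M| \geq 2$ the destabilization maps are built from higher polygon counts governed by the $\Theta$-fillings of the empty $\alpha$- and $\beta$-hyperboxes, and here one must give these fillings combinatorially rather than invoke the abstract existence result of Lemma 6.6 in \cite{hflz}. The approach is to exploit the fact that all curves in a grid diagram are straight circles on the torus, write down explicit top-dimensional generators for the $\Theta_{\varepsilon,\varepsilon'}$, and verify the hyperbox relations in Equation 50 of \cite{hflz} by direct enumeration of domains, in parallel with the combinatorial verification of the higher $A_\infty$-relations in link Floer homology.

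The main obstacle is precisely the last step: producing explicit combinatorial fillings so that every polygon appearing in any $\Phi^{\vec{M}}$ is counted combinatorially. This is the technical heart of the Manolescu--Ozsv\'ath--Thurston construction in \cite{hflzcombo}: once grid diagrams and their hyperboxes of moves are set up with sufficient care, every domain contributing to the $D^\varepsilon$ admits a purely combinatorial description in terms of markings and rectangles, and so the hypercube $\C^\circ(\hyperbox,\Lambda)$ is computable for any framing $\Lambda$.
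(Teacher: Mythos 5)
This statement is not proved in the paper at all: it is quoted verbatim from \cite{hflzcombo} and used as a black box (e.g.\ in Corollary~\ref{combinatorialles}), so there is no internal argument to compare yours against. The only question is whether your sketch itself constitutes a proof, and it does not. You correctly identify the grid-diagram strategy of Manolescu--Ozsv\'ath--Thurston, but the step you flag as ``the main obstacle'' --- producing explicit fillings $\Theta_{\varepsilon,\varepsilon'}$ and showing that \emph{every} holomorphic polygon count entering the maps $\Phi^{\vec{M}}$ (triangles and all higher polygons coming from destabilizing sublinks with $|M|\geq 2$) is determined combinatorially --- is precisely the content of the theorem, and your proposal resolves it by appealing to the very reference being proved. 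That is a genuine gap, not a deferrable technicality: counts of pseudo-holomorphic polygons in a multi-diagram are not in general determined by the combinatorics of their domains, so writing down candidate domains (``trapezoids'', or domains for the higher $\Theta$-relations) does not by itself compute the maps; one needs the specific choices and arguments of \cite{hflzcombo} that force the counts.

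A second concrete problem is the assertion that the whole complete system lives on a single toroidal surface with all curves ``straight circles'', so that polygon enumeration is automatic. The destabilization maps relate $r_{\vec{M}}(\hyperbox^{L'})$ to $\hyperbox^{L'-M}$ through sequences of handleslides and isotopies, so the multi-diagrams in which the triangles and higher polygons are counted contain curves that are no longer grid curves; the combinatorial control over those counts is exactly what requires the careful construction of the complete system in \cite{hflzcombo}, together with the verification of the compatibility condition (Equation 50 of \cite{hflz}) for the chosen fillings. In short, your outline reproduces the strategy of the cited work at the level of a plan, but the technical heart --- combinatorial computability of the destabilization and higher-polygon maps for a suitably constructed complete system --- is assumed rather than proved.
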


If such a proof of the three-manifold invariance of Heegaard Floer homology could be made to be compatible with the constructions of Theorem~\ref{combinatorialhf}, then one could give a combinatorial proof of the invariance of Heegaard Floer homology.    

Once invariance under the choice of complete systems is shown (this covers link isotopies), one only needs to be able to construct an isomorphism if the oriented, framed links $(L_1,\Lambda_1)$ and $(L_2,\Lambda_2)$ are obtained by a sequence of handleslides, (de)stabilizations, and orientation-reversals of the components.  Recall that $(L,\Lambda)$ is a stabilization of $(L',\Lambda')$ if $(L,\Lambda)$ is obtained by adding a geometrically-split $\pm 1$-framed unknot to $(L',\Lambda')$.  

As we will see in Section~\ref{noninvariancesection}, Framed Floer homology is not a three-manifold invariant.  Therefore, any explicit quasi-isomorphism between the surgery formulas for $(L,\Lambda)$ and $(L',\Lambda')$ cannot preserve the $\varepsilon$-filtration and induce isomorphisms between the $\mathfrak{A}$-complexes simultaneously.  This indicates that a proof of three-manifold invariance will have to be somewhat unnatural in its construction.  While we don't currently have a complete proof of three-manifold invariance for Heegaard Floer homology from the link surgery formula, we will give a proof of stabilization-invariance. 

\begin{remark}
In the proof of Proposition~\ref{stabilizationinvariance}, we will choose a special complete system for $(L,\Lambda)$ given one for $(L',\Lambda')$.  The current proof that the homology of the link surgery formula is independent of the choice of complete system is actually implicitly using the assumption that Heegaard Floer homology is a three-manifold invariant (see the proof of Theorem 7.7 in \cite{hflz}).  Hopefully this problem can be dealt with when restricting to the combinatorial setting in Theorem~\ref{combinatorialhf}. 
\end{remark}

\begin{proof}[Proof of Proposition~\ref{stabilizationinvariance}]
Since the framings change by $\Lambda = \Lambda' \oplus \langle \pm 1 \rangle$, the identification between $\mathbb{H}(L)/H(L,\Lambda)$ and $\mathbb{H}(L')/H(L',\Lambda')$ is clear.    

We first do the case of a $+1$-stabilization.  $L'$ will have $n-1$ components.  $L$ will be $L' \coprod U$, ordered to agree with the original ordering on $L'$ and have $U$ be the unknotted $n$th component.  We will therefore denote elements of $\mathbb{H}(L)$ by $(\mathbf{s},s_n)$.

Fix a basic system $\widetilde{\hyperbox}$ for $L'$.  By utilizing the construction in Figure~\ref{stabilizedpicture} we can use $\widetilde{\hyperbox}$ to create a basic system $\hyperbox$ for $L$, similar to performing a free index 0/3 stabilization.  Let $\hyperbox' = \hyperbox|_{L'}$, as discussed in Section~\ref{surgerytheoremstatement}.  
\begin{figure}
\labellist
\large
\pinlabel $\Sigma$ at 67 95
\pinlabel $\Sigma$ at 510 95
\pinlabel $\alpha$ at 550 215
\pinlabel $\beta$ at 750 215
\pinlabel $w_n$ at 650 190
\pinlabel $z_n$ at 650 145
\endlabellist

\begin{center}
\includegraphics[scale=.36]{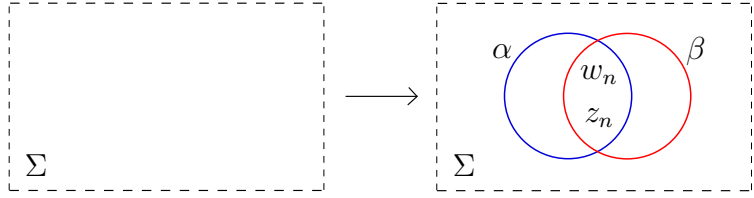}
\end{center}
\caption{Creating the unknot $U$ in the complete system}\label{stabilizedpicture}
\end{figure}

From these diagrams, one must always have that $A_{n}(\mathbf{x}) = A_{n}(\mathbf{y}) = 0$ for all $\mathbf{x}, \mathbf{y} \in \mathbb{T}_{\alpha^\varepsilon} \cap \mathbb{T}_{\beta^\varepsilon}$, since Whitney disks pass over $z_{n}$ if and only if they pass over $w_{n}$.  Therefore, if $s_{n} \geq 0$, then $\Phi^{+U}$ is a quasi-isomorphism between the $\mathfrak{A}^\circ$-complexes; similarly if $s_{n} \leq 0$, then $\Phi^{-U}$ is a quasi-isomorphism.  This is because for these $s_{n}$ we have that $\incl^{\pm U}$ is the identity, and thus $\Phi^{\pm U} = \destab^{\pm U}$, which is always a quasi-isomorphism.  We will use this observation to truncate nearly all of the complex $\C^\circ(\hyperbox,\Lambda,\mathfrak{s})$.      

We will compress our complexes in the $L'$-direction notationally by 
\[
\Link^i_{s_{n}} = \bigoplus_{\varepsilon_{n} = i} \prod_{\{\mathbf{s} \in \mathbb{H}(L'),[\mathbf{s}] = \mathfrak{s}\}} \varepsilon{(\mathbf{s},s_{n})}
\]
and
\[
\Gamma^{\pm U} = \sum_{\vec{N} \subseteq L} \Phi^{\pm U \cup \vec{N}}.
\]

Therefore, we can imagine a compressed picture for $\C^\circ(\hyperbox,\Lambda,\mathfrak{s})$ as 
\[
\xymatrix{
\ldots\!\!\!\!\!\!\!\!\!\!\!\!\!\!\!\!\!\!\!\!\!\! & \Link^0_{-1} \ar[d]^{\Gamma^{+ U}} \ar[dr]^{\Gamma^{- U}} & \Link^0_{0} \ar[d]^{\Gamma^{+ U}} \ar[dr]^{\Gamma^{- U}} & \Link^0_{1} \ar[d]^{\Gamma^{+ U}} \ar[dr]^{\Gamma^{- U}} & \qquad\ldots \\
\ldots \!\!\!\!\!\!\!\!\!\!\!\!\!\!\!\!\!\!\!\!\!\!& \Link^1_{-1} & \Link^1_{0} & \Link^1_{1} & \qquad\ldots
}
\]

Now, we perform what is called {\em horizontal truncation} (see Section 8.3 of \cite{hflz}) and see what remains after the dust settles.  This is a standard argument which is very useful for calculations with the link surgery formula.  

Consider the subcomplex, $\Mink_>$, defined by all $\Link^i_{s_{n}}$ with $s_{n} \geq 0$.  Define a filtration on $\Mink_>$ by 
\[
\F_>(x) = -s_{n} - \sum_{i \neq n} \varepsilon_i \text{ for } x \in \varepsilon_{(\mathbf{s},s_n)}.
\]  
While this filtration is not bounded below, this will not cause a problem in our setting.  The components of the differential that do not lower the filtration are $\Phi^{+U}$ and the vertex-differentials $\partial$.  The associated graded splits as a product of complexes 
\[
\xymatrix{
(\varepsilon_1\cdots\varepsilon_{n-1}0_{(\mathbf{s},s_{n})},\partial) \ar[r]^{\Phi^{+U}} & (\varepsilon_1\cdots\varepsilon_{n-1}1_{(\mathbf{s},s_{n})},\partial).
}
\]
Since $s_{n} \geq 0$, $\Phi^{+U}$ is a quasi-isomorphism and thus each of these complexes is acyclic.  Therefore, all of $\Mink_>$ is acyclic by Fact~\ref{filteredisoisiso}.  

Now, we consider the subcomplex, $\Mink_<$, generated by 
\[
\Link^0_{s_{n}} \text{ with } s_{n} < 0 \text{ and } \Link^1_{s_{n}} \text{ with } s_{n} \leq 0.
\]

We claim that this is acyclic as well.  Construct the filtration $\F_<(x) = s_{n} - \sum \varepsilon_i$ on $\Mink_<$.  This time the differentials preserving the filtration levels are $\partial$ and $\Phi^{-U}$.  Therefore, the associated graded splits as products
\[
\xymatrix{
(\varepsilon_1\cdots\varepsilon_{n-1}0_{(\mathbf{s},s_{n})},\partial) \ar[r]^{\Phi^{-U}} & (\varepsilon_1\cdots\varepsilon_{n-1}1_{(\mathbf{s},s_{n}+1)},\partial).
}
\]
Since $s_{n} \leq 0$, $\Phi^{-U}$ is a quasi-isomorphism and thus these complexes are acyclic.  Thus, $\Mink_<$ is acyclic.  Therefore, $\C^\circ(\hyperbox,\Lambda,\mathfrak{s})$ is quasi-isomorphic to the only piece that is left, $\Link^0_0$.  Thus, we just want to understand what this complex is.  Since $s_{n} = 0$, 
\[
\xymatrix{
(\varepsilon_1\cdots\varepsilon_{n-1}0_{(\mathbf{s},0)},\partial) \ar[r]^{\Phi^{+U}} & (\varepsilon_1\cdots\varepsilon_{n-1}1_{(\mathbf{s},0)},\partial)
}
\]
is a quasi-isomorphism.  Therefore, $\Gamma^{+U}$ induces a quasi-isomorphism from $\Link^0_0$ to $\Link^1_0$.  However, by construction we may apply Remark~\ref{restrictedsystems} to see that $\Link^1_0$ is exactly $\C^\circ(\hyperbox',\Lambda',\mathfrak{s})$.  We conclude that $\C^\circ(\hyperbox,\Lambda,\mathfrak{s})$ and $\C^\circ(\hyperbox',\Lambda',\mathfrak{s})$ are quasi-isomorphic.  
 
The proof for $-1$-stabilization now goes the same way except for a small change.  Instead of eliminating the two acyclic subcomplexes, we use two acyclic quotient-complexes.  The first consists of all $\Link^i_{s_{n}}$ with $s_{n} > 0$.  The other consists of $\Link^1_{s_{n}}$ with $s_{n} < 0$ and $\Link^0_{s_{n}}$ with $s_{n} \leq 0$.  By the same arguments these will be acyclic and we are left with $\Link^1_0$ instead of $\Link^0_0$.  This is $\C^\circ(\hyperbox',\Lambda',\mathfrak{s})$, completing the proof.      

Note that we can make the same truncations for $\widehat{FFC}(\hyperbox,\Lambda,\mathfrak{s})$ as for the complex $\widehat{\C}(\hyperbox,\Lambda,\mathfrak{s})$ itself, since the quasi-isomorphisms we were using to truncate were of the form $\Phi^{\pm U}$ which lower the $\varepsilon$-filtration by 1 (and thus are what we consider in the $d_1$ differential).  Therefore, since all of the explicit maps and identifications used in the argument are filtered and are quasi-isomorphisms on the $(E_1,d_1)$ pages, we see that $\widehat{FFH}(L,\Lambda,\mathfrak{s})$ and $\widehat{FFH}(L',\Lambda',\mathfrak{s})$ are also isomorphic.  
\end{proof}

\begin{remark}
Manolescu and Ozsv\'ath use horizontal truncation to show that for any oriented, framed link $(L,\Lambda)$ and $\mathfrak{s} \in \text{Spin}^c(Y)$, the complex $\C^-(\hyperbox,\Lambda,\mathfrak{s})$ is quasi-isomorphic to a complex which is finite-dimensional over $\mathbb{F}[[U_1,\ldots,U_n]]$ (Section 8.3 in \cite{hflz}).  A corollary of their construction is that the Framed Floer homology of any framed link in each Spin$^c$ structure is finite-dimensional over $\mathbb{F}$, analogous to $\widehat{HF}$.
\end{remark}

\begin{remark}
It seems likely that an effective way to prove the invariance of Heegaard Floer homology is to prove invariance of the link surgery formula under Fenn-Rourke moves \cite{FennRourke} (or some other set of local moves) instead of the standard Kirby moves.  With some extra work, one can prove invariance under the first non-trivial Fenn-Rourke move shown in Figure~\ref{onestrand}.  The details of the argument still prove the invariance of this move for Framed Floer homology as well, so the twisting seen in the other Fenn-Rourke moves is what will affect the Framed Floer homology.  
\end{remark}

\begin{figure}
\labellist
\large
\pinlabel $n\pm1$ at 430 250
\pinlabel $n$ at 172 250
\pinlabel $\pm1$ at 55 85
\pinlabel $\pm1$ at 425 85
\endlabellist
\begin{center}
\includegraphics[scale=.45]{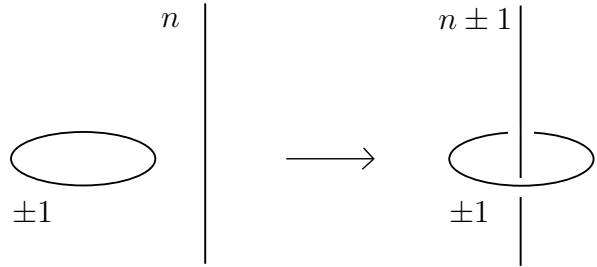}
\end{center}
\caption{The first non-trivial Fenn-Rourke move}\label{onestrand}
\end{figure} 

\begin{remark}
Note that it is easy to use the K\"unneth formula to prove stabilization invariance for the hat flavor.  The problem with doing this for other flavors is that in the proof of the K\"unneth formula for other flavors, one implicitly makes use of the fact that the Heegaard Floer homology of $S^2 \times S^1$ is independent of the choice of Heegaard diagram.  The trouble of avoiding this concern is more than the effort of using horizontal truncations in the proof of Proposition~\ref{stabilizationinvariance}.  
\end{remark}


\section{Three-Manifold Non-invariance}\label{noninvariancesection}

We now prove Theorem~\ref{noninvariancetheorem}.  In light of Proposition~\ref{stabilizationinvariance}, we will try to distinguish Framed Floer homology via handleslides.  It would be interesting to see a geometric explanation of why the higher polygon maps are required for the three-manifold invariance of Heegaard Floer homology.  We remark here that the choice of orientation of our links will not affect any of the computations in this section and in Section~\ref{mirrorsection} so we will not make mention of this choice.  

Consider the three-manifold $M=S^3_0(T_L) \# S^3_0(T_R)$, where $T_L$ and $T_R$ are the left-handed and right-handed trefoils respectively.  We would like two surgery presentations of $M$ as framed links in $S^3$ with different Framed Floer homologies.
First, there is the obvious presentation, $(L_1,\Lambda) = (T_L \coprod T_R, \mathbf{0})$.  Handlesliding $T_L$ over $T_R$ gives the new framed link $(L_2,\Lambda)$.  The components of $L_2$ are $T_L \# T_R$ and $T_R$.  The framing has not changed since we handeslid over a 0-framed, unlinked component.  Sometimes we will refer to these framings by $(n_1,n_2)$, where these are the surgery coefficients of the two components.  The proof of Theorem~\ref{noninvariancetheorem} is divided up over the next three subsections.  Recall that we have set things up so that there are no factors of $H^*(S^1)$ to keep track of.      

Fix basic systems for $L_1$ and $L_2$, $\hyperbox_1$ and $\hyperbox_2$ respectively.  Recall that $[\varepsilon_{\mathbf{s}}]$ is our notation for  $H_*(\varepsilon_{\mathbf{s}},\partial)$, which are the terms that make up the $E_1$ page of the $\varepsilon$-spectral sequence.  It will be clear from the context which complete system $\varepsilon_{\mathbf{s}}$ is coming from, where we allow $\hyperbox_1$, $\hyperbox_2$, or the complete system obtained by restricting some $\hyperbox_i$ to a single component.  By Theorem~\ref{largesurgeries}, for fixed $\varepsilon$ and $\mathbf{s}$, we have that $[\varepsilon_{\mathbf{s}}]$ is independent of the complete system for a given link.  Finally, for a knot $K$ we use the notation 
\[
\Psi^K_{\mathbf{s}} = (\Phi^{+K}_{\mathbf{s}})_*+(\Phi^{-K}_{\mathbf{s}})_*.
\] 

\begin{remark}\label{noninvariancerestrictedsystems} We may apply Remark~\ref{restrictedsystems} to see that studying the appropriate face of the surgery formula for $L$ really corresponds to studying sugery on a sublink.  In the case that $\Lambda$ is identically 0, the complexes in Remark~\ref{restrictedsystems} are especially simple.  In particular, if $L$ has only two components, say $K_1$ and $K_2$, and $\hyperbox$ is a complete system for $L$, then we have an $\varepsilon$-filtered chain complex isomorphism between the one-dimensional hypercubes of chain complexes
\[
\xymatrix{
([01_{(s_1,s_2)}] \ar[r]^{\Psi^{K_1}_{(s_1,s_2)}} & [11_{(s_1,s_2)}] )
}
\]
and 
\[
\xymatrix{ 
([0_{s_1}] \ar[r]^{\Psi^{K_1}_{s_1}} & [1_{s_1}]), 
}
\]
where $01_{(s_1,s_2)}$ and $11_{(s_1,s_2)}$ sit inside the complex  $\widehat{\C}(\hyperbox,\Lambda)$ and $0_{s_1}$ and $1_{s_1}$ sit inside $\widehat{\C}(\hyperbox|_{K_1},\Lambda|_{K_1})$.  A similar statement holds for $K_2$.  
\end{remark}

\subsection{Background Knot Calculations}\label{knotsurgerycalc}
As discussed, we would like to analyze the details of the surgery formula for each of the individual knots appearing as components of one of the $L_i$ - these are $T_L$, $T_R$, and $T_L \# T_R$.  For each component $K$, we will understand in detail the complexes 
\[
\xymatrix{
([0_s] \ar[r]^{\!\!\!\!\!\!\!\!\!\!\!\!\!\!\!\!\!\!\!\!\!\!\!\!\Psi_s^{K}} & [1_s]) = (H_*(\widehat{\mathfrak{A}}(\hyperbox_i^K,s)) \ar[r]^{\quad\quad\quad\Psi_s^{K}} & \widehat{\hfbold}(S^3)), 
}
\]
which calculate $\widehat{\hfbold}(S^3_0(K),s)$ for each $s \in \mathbb{Z} \cong \text{Spin}^c(S^3_0(K))$.  

We first need some background from \cite{hfalternating}.  Recall that a knot is {\em $HFK$-thin} if the value of the difference between the Alexander grading and the Maslov grading of each homological generator is the same.  Let $K$ have signature $\sigma$ and Alexander polynomial
\[
\Delta_K(T) = a_0 + \sum_{s>0} a_s(T^s + T^{-s}).
\]
Then, we set
\[
t_s(K) = \sum_{j=1}^\infty j a_{|s| + j} 
\text{ and }
\delta(\sigma,s) = 0 \vee \lceil \frac{|\sigma| - 2|s|}{4} \rceil.
\]
Note that the sum is always finite.  Finally, define $b_s$ for each $s \in \mathbb{Z}$ by 
\[
(-1)^{s+\frac{\sigma}{2}}b_s = \delta(\sigma,s) - t_s(K).
\]
We now are ready to give the hat-version of a result of Ozsv\'ath and Szab\'o.

\begin{theorem}[cf. Theorem 1.4 of \cite{hfalternating}] \label{thincalculation}
Suppose $K$ is an $HFK$-thin knot with $\sigma \leq 0$.  Then, for $s \in \mathbb{Z}$  
\begin{equation*}
\rk \widehat{HF}(S^3_0(K),s)  = \left\{
\begin{array}{rl}
2b_s + 2& \text{ if } s=0 \text{ or } \delta(\sigma,s) \neq 0\\
2b_s & \text{ if } s \neq 0 \text{ and } \delta(\sigma,s) = 0
\end{array} \right. 
\end{equation*}
and $\rk H_*(\widehat{\mathfrak{A}}(\hyperbox_i^K,s))  = 2b_s + 1$.
\end{theorem}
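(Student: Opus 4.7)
The plan is to bootstrap both identities from the minus-flavor computation recorded as Theorem 1.4 of \cite{hfalternating}, translating to the hat-flavor via the standard $U$-action long exact sequence, and then to pass to the $\widehat{\mathfrak{A}}$-statement using the large surgery theorem (Theorem~\ref{largesurgeries}).

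First I would recall that for an HFK-thin knot $K$ with $\sigma \leq 0$, the group $\widehat{HFK}(K)$ is supported in the single $\delta$-grading $-\sigma/2$, and this thinness propagates through the mapping-cone formula to force the reduced part of $HF^-(S^3_0(K), s)$ to sit in a single $\delta$-grading, hence to be annihilated by $U$. Combined with the $HF^\infty$ structure (which contributes nontrivially only in the unique torsion Spin$^c$ structure $s = 0$, and then as two towers $\mathcal{T}^-$ since $HF^\infty(S^3_0(K), 0) = \fpowers \otimes \fexterior H^1$), Theorem 1.4 of \cite{hfalternating} writes the $\mathbb{F}[U]$-module $HF^-(S^3_0(K), s)$ entirely in terms of $t_s(K)$ and the signature correction $\delta(\sigma, s)$.

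Next I would feed this structure into the $U$-action long exact sequence
\[
\ldots \longrightarrow HF^-(S^3_0(K), s) \xrightarrow{\,U\,} HF^-(S^3_0(K), s) \longrightarrow \widehat{HF}(S^3_0(K), s) \longrightarrow \ldots
\]
Each tower $\mathcal{T}^-$ contributes $1$ to $\widehat{HF}$ (the bottom generator, from the $U$-kernel), while each reduced $\mathbb{F}$-summand (on which $U$ acts trivially, by thinness) contributes $2$: once to $\ker(U)$ and once to $\coker(U)$. Tallying the contributions gives $2 b_s + 2$ when $s = 0$ from the two towers, and $2 b_s$ when $s \neq 0$ with $\delta(\sigma, s) = 0$. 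The subtle case is $s \neq 0$ with $\delta(\sigma, s) \neq 0$: here the absolute-grading shift by $\delta(\sigma, s)$ splits off an extra pair of reduced generators not captured by $t_s$ alone, and precisely this mismatch is what the alternating sign $(-1)^{s + \sigma/2}(\delta(\sigma, s) - t_s(K)) = b_s$ is designed to record. Careful accounting recovers the stated $2 b_s + 2$ in that case as well.

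Finally, for the $\widehat{\mathfrak{A}}$-complex, Theorem~\ref{largesurgeries} identifies $H_*(\widehat{\mathfrak{A}}(\hyperbox_i^K, s))$ with $\widehat{HF}(S^3_N(K), s)$ for any $N$ sufficiently large. Now $H_1(S^3_N(K)) = \mathbb{Z}/N$ is torsion, so \emph{every} Spin$^c$ structure is torsion and $HF^-$ carries exactly one tower in each Spin$^c$ class, together with a reduced part of $\mathbb{F}$-dimension $b_s$. Running the same long exact sequence then gives $\rk \widehat{HF}(S^3_N(K), s) = 2 b_s + 1$. The main obstacle throughout is the sign/case analysis governing $\delta(\sigma, s)$; everything else is routine $\mathbb{F}[U]$-module bookkeeping, which is clean precisely because thinness trivializes the $U$-action on the reduced part.
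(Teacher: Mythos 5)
The paper itself supplies no argument here: Theorem~\ref{thincalculation} is simply quoted as the hat-flavor translation of Theorem 1.4 of \cite{hfalternating}, so your sketch stands or falls on whether it genuinely bridges from that cited computation to the two displayed rank formulas, and this is where it has real gaps even though the overall route (cite \cite{hfalternating}, pass flavors via the $U$-action sequence, use Theorem~\ref{largesurgeries} for the $\widehat{\mathfrak{A}}$-statement) is the intended one. First, the $U$-module bookkeeping you rely on is wrong precisely in the case you call subtle: for a thin knot the reduced part of $HF^{\pm}(S^3_0(K),s)$ is \emph{not} annihilated by $U$ in general. The staircase summand of the thin knot complex contributes a cyclic $\mathbb{F}[U]$-summand of length $\delta(\sigma,s)$, on which $U$ acts nontrivially as soon as $\delta(\sigma,s)\geq 2$ (this already happens for $T_{2,9}$ with $s=\pm 1$), and it is this extra cyclic summand -- not ``an extra pair of reduced generators split off by the absolute-grading shift'' -- that produces the $+2$ when $s\neq 0$ and $\delta(\sigma,s)\neq 0$: every cyclic summand, whatever its length, contributes one generator to $\ker U$ and one to $\coker U$. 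Your paragraph on that case gestures at the defining identity for $b_s$ but never performs this count, so as written it asserts the conclusion rather than deriving it. (A smaller symptom of the same looseness: the tower $\mathcal{T}^-=\mathbb{F}[[U]]$ contributes to the hat group through $\coker U$, not through the $U$-kernel as you state.)

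Second, and more seriously, the claim $\rk H_*(\widehat{\mathfrak{A}}(\hyperbox_i^K,s))=2b_s+1$ is not proved. After invoking Theorem~\ref{largesurgeries} you assert that the large-surgery group consists of one tower ``together with a reduced part of $\mathbb{F}$-dimension $b_s$,'' but that is exactly the statement to be established, and it is not contained in Theorem 1.4 of \cite{hfalternating}, which concerns $0$-surgery. You need a separate input: either the large-surgery computation for thin (alternating) knots carried out in \cite{hfalternating} itself, or a direct computation of $H_*(\widehat{\mathfrak{A}}(\hyperbox_i^K,s))$ from the structure of thin $CFK^\infty$ (one staircase determined by $\sigma$ plus square summands determined by the Alexander polynomial), which is where $t_s(K)$ and $\delta(\sigma,s)$ actually enter. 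Note also that the two halves of the theorem determine each other only up to a $\pm 1$ ambiguity, via the mapping cone of $\Psi^K_s$ from $H_*(\widehat{\mathfrak{A}}(\hyperbox_i^K,s))$ to $\widehat{\hfbold}(S^3)$: passing from the $0$-surgery rank to the large-surgery rank requires knowing whether $\Psi^K_s$ vanishes (this is the content of Remark~\ref{calculatepsi}, which the paper deduces \emph{from} the theorem, not the other way around). Your sketch supplies neither that input nor the thin-complex computation, so the second displayed formula is assumed rather than proved.
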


Theorem 1.3 of \cite{hfalternating} proves that alternating knots are $HFK$-thin.  Therefore, $T_L$ and $T_R$ are $HFK$-thin.  Finally, by the K\"unneth formula for knot Floer homology (Corollary 7.2 of \cite{hfk}), $T_L \# T_R$ is also thin.  

\begin{remark}\label{calculatepsi}
Because $\widehat{HF}(S^3_0(K),s)$ is calculated by the homology of the complex
\[
\xymatrix{
([0_s] \ar[r]^{\!\!\!\!\!\!\!\!\!\!\!\!\!\!\!\!\!\!\!\!\!\!\!\!\Psi_s^{K}} & [1_s]) = (H_*(\widehat{\mathfrak{A}}(\hyperbox_i^K,s)) \ar[r]^{\quad\quad\quad\Psi_s^{K}} & \widehat{\hfbold}(S^3)), 
}
\]
and the rank of $\widehat{HF}(S^3)$ is 1, we know that $\Psi_s^K$ will have rank 1 or 0 depending on whether the rank of $H_*(\widehat{\mathfrak{A}}(\hyperbox_i^K,s))$ is greater than or less than the rank of $\widehat{HF}(S^3_0(K),s)$.  
\end{remark}

In particular, for knots satisfying the hypotheses of Theorem~\ref{thincalculation}, 
\begin{equation*}
\rk \Psi_s^K = \left\{
\begin{array}{rl}
0 & \text{ if } s = 0 \text{ or } \delta(\sigma,s) \neq 0 \\
1 & \text{ if } s \neq 0 \text{ and } \delta(\sigma,s) = 0
\end{array}. \right.
\end{equation*}

We are now ready to calculate $H_*(\widehat{\mathfrak{A}}(\hyperbox_i^K,s))$ and the rank of $\Psi_s^K$ for the components of $L_1$ and $L_2$.      

\subsubsection{The Right-Handed Trefoil}\label{rht}
The Alexander polynomial for the right-handed trefoil is given by 
\[
\Delta_{T_R}(T) = T - 1 + T^{-1}.  
\]
Furthermore, $T_R$ has signature -2.  Applying Theorem~\ref{thincalculation}, we arrive at 
\begin{equation}\label{rhtcalc}
H_*(\widehat{\mathfrak{A}}(\hyperbox_i^{T_R},s)) \cong \mathbb{F} \text{ for all } s \;, \quad  
\rk \Psi_s^{T_R} = \left\{
\begin{array}{rl}
0 & \text{ if } s = 0 \\
1 & \text{ if } s \neq 0
\end{array} \right.
\end{equation}

\subsubsection{The Left-Handed Trefoil}\label{lht}
Since the signature of $T_L$ is 2, we cannot apply Theorem~\ref{thincalculation} for $T_L$.  We must use another approach.  

The adjunction inequality (Theorem 7.1 of \cite{hfpa}) implies that $\widehat{HF}(S^3_0(K),s) = 0$ for $|s| \geq g(K)$, where $g(K)$ is the Seifert genus of the knot $K$.  Therefore, for $|s| \geq g(K)$, both $H_*(\mathfrak{A}(\hyperbox^K,s))$ and $\Psi_s^K$ must have rank 1.  If it so happens that $H_*(\mathfrak{A}(\hyperbox^K,s))$ has rank 1 for {\em all} $s$, then $K$ must admit a positively-framed L-space surgery (see, for example, \cite{jhomlspace}).  However, $T_L$ is genus one, but does not admit a positively-framed L-space surgery (see Section 8.1 of \cite{absgraded}).
Finally, since $S^3_0(T_L)$ and $S^3_0(T_R)$ are homeomorphic (orientation reversing), we know that the rank of $\widehat{HF}(S^3_0(T_L),0)$ is two by Theorem~\ref{thincalculation} and (\ref{rhtcalc}).    

Putting together all of this information and again applying Remark~\ref{calculatepsi} we see that  
\begin{equation}\label{lhtcalc}
\rk H_*(\widehat{\mathfrak{A}}(\hyperbox_1^{T_L},s)) = \left\{
\begin{array}{rl}
3 & \text{ if } s = 0 \\
1 & \text{ if } s \neq 0
\end{array} \right.
, \quad \rk \Psi_s^{T_L} = 1 \text{ for all } s.
\end{equation}

\subsubsection{The Square Knot}
Finally, we need the calculation for $T_L \# T_R$.  This has vanishing signature and Alexander polynomial
\[
\Delta_{T_L \# T_R}(T) = T^2 - 2T + 3 - 2T^{-1} + T^{-2}.
\]
Again, applying Theorem~\ref{thincalculation} gives
\begin{equation}\label{squarecalc}
\rk H_*(\widehat{\mathfrak{A}}(\hyperbox_2^{T_L \# T_R},s)) = \left\{
\begin{array}{rl}
3 \text{ if } |s| = 1 \\
1 \text{ if } |s| \neq 1
\end{array} \right.
, \quad 
\rk \Psi_s^{T_L \# T_R} = \left\{
\begin{array}{rl}
0 \text{ if } s = 0 \\
1 \text{ if } s \neq 0
\end{array}. \right.
\end{equation}

We can now use the computations of this subsection to calculate $\widehat{HF}(M,s)$.  Recall that $M$ was defined as $S^3_0(T_L) \# S^3_0(T_R)$; the K\"unneth formula for Heegaard Floer homology (Theorem 1.5 of \cite{hfpa}) combined with Theorem~\ref{thincalculation}, (\ref{rhtcalc}), and (\ref{lhtcalc}) shows that
\begin{equation*}
\rk \widehat{\hfbold}(M,\mathfrak{s}) = \left\{
\begin{array}{rl}
4 & \text{ if } \mathfrak{s} = 0 \\
0 & \text{ if } \mathfrak{s} \neq 0
\end{array}. \right.
\end{equation*}

\subsection{The Framed Floer Homology of $(L_1,\Lambda)$}
\begin{proposition}\label{L1calc}
For all $\mathfrak{s}$, $\widehat{FFH}(L_1,\Lambda,\mathfrak{s}) \cong \widehat{HF}(M,\mathfrak{s})$.
\end{proposition}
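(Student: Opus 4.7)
The plan is to observe that $L_1 = T_L \coprod T_R$ is a geometrically split link in $S^3 = S^3 \# S^3$ with framing $\Lambda = (0) \oplus (0)$, so the whole statement reduces to an application of the K\"unneth formula for $\widehat{FFH}$ together with the K\"unneth formula for $\widehat{HF}$.

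First I would apply Proposition~\ref{kunnethformula} with $Y_1 = Y_2 = S^3$, $L_1 = T_L$, $L_2 = T_R$, and framings $\Lambda_1 = \Lambda_2 = (0)$. Since the Spin$^c$ structures on $S^3_0(T_L) \# S^3_0(T_R)$ correspond to pairs $(\mathfrak{s}_1, \mathfrak{s}_2) \in \text{Spin}^c(S^3_0(T_L)) \times \text{Spin}^c(S^3_0(T_R))$, this yields
\[
\widehat{FFH}(L_1, \Lambda, \mathfrak{s}) \cong \bigoplus_{\mathfrak{s}_1 \# \mathfrak{s}_2 = \mathfrak{s}} \widehat{FFH}(T_L, 0, \mathfrak{s}_1) \otimes \widehat{FFH}(T_R, 0, \mathfrak{s}_2).
\]

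Next I would invoke Remark~\ref{knotsvanish}, which states that for any knot $K$ in $Y$ with integer framing $\lambda$, the $\varepsilon$-spectral sequence collapses at the $E_2$ page by depth considerations, giving $\widehat{FFH}(K, \lambda, \mathfrak{s}) \cong \widehat{HF}(Y_\lambda(K), \mathfrak{s})$. Applying this to both $T_L$ and $T_R$ with framing $0$ in $S^3$ converts each tensor factor on the right-hand side into $\widehat{HF}(S^3_0(T_L), \mathfrak{s}_1) \otimes \widehat{HF}(S^3_0(T_R), \mathfrak{s}_2)$.

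Finally, I would apply the K\"unneth formula for $\widehat{HF}$ (Theorem 1.5 of \cite{hfpa}) to the connected sum $M = S^3_0(T_L) \# S^3_0(T_R)$, yielding
\[
\bigoplus_{\mathfrak{s}_1 \# \mathfrak{s}_2 = \mathfrak{s}} \widehat{HF}(S^3_0(T_L), \mathfrak{s}_1) \otimes \widehat{HF}(S^3_0(T_R), \mathfrak{s}_2) \cong \widehat{HF}(M, \mathfrak{s}),
\]
completing the identification. There is essentially no obstacle here — the proposition is a direct composition of three already-established results, and the only thing to verify is the bookkeeping that the Spin$^c$ decompositions on both sides match up via $\mathfrak{s} \leftrightarrow (\mathfrak{s}_1, \mathfrak{s}_2)$, which follows from the standard identification of $\text{Spin}^c$ structures on a connected sum. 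The real interest of this computation lies not in its proof but in its use in Section~\ref{noninvariancesection}: combined with the upcoming computation for $(L_2, \Lambda)$, it will exhibit the desired discrepancy proving Theorem~\ref{noninvariancetheorem}.
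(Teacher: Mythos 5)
Your proposal is correct and matches the paper's proof exactly: both chain the Framed Floer K\"unneth formula (Proposition~\ref{kunnethformula}), Remark~\ref{knotsvanish} for the two trefoil components, and the Heegaard Floer K\"unneth formula for connected sums. The only cosmetic difference is your direct sum over pairs $\mathfrak{s}_1 \# \mathfrak{s}_2 = \mathfrak{s}$, which has a single term since Spin$^c$ structures on a connected sum decompose uniquely, so it coincides with the paper's statement.
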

\begin{proof}
We apply the Framed Floer homology K\"unneth formula (Proposition~\ref{kunnethformula}), Remark~\ref{knotsvanish}, and the Heegaard Floer homology K\"unneth formula to see that 
\begin{align*}
\widehat{FFH}(L_1,\Lambda,\mathfrak{s}_1 \# \mathfrak{s}_2) & \cong \widehat{FFH}(T_L,0,\mathfrak{s}_1) \otimes \widehat{FFH}(T_R,0,\mathfrak{s}_2) \\
& \cong \widehat{HF}(S^3_0(T_L),\mathfrak{s}_1) \otimes \widehat{HF}(S^3_0(T_R),\mathfrak{s}_2) \\
& \cong \widehat{HF}(M,\mathfrak{s}_1 \# \mathfrak{s}_2). \qedhere
\end{align*}
\end{proof}

\subsection{The Framed Floer Homology of $(L_2,\Lambda)$}
Here, we will see that there is exactly one Spin$^c$ structure where $\widehat{HF}(M,\mathfrak{s})$ differs from $\widehat{FFH}(L_2,\Lambda,\mathfrak{s})$.  

The key thing to observe is that for any $n$, the framed link $(L_1,(n,0))$ can still be handleslid to obtain $(L_2,(n,0))$; this only requires that the framing on $T_R$ remain fixed at 0 to still obtain $L_2$ upon handlesliding.  Let's first study $\widehat{HF}(S^3_{(n,0)}(L_2))$ for sufficiently large $n$.  Given $\mathbf{s} \in \mathbb{H}(L)$ we will use $[\mathbf{s}]$ to represent Spin$^c$ structures on various framed surgeries on $L$.  The context will always be clear which surgery these Spin$^c$ structures are living in.

\begin{lemma}\label{largetruncs} Let $L = K_1 \cup K_2$ and fix $\mathbf{s} \in \mathbb{H}(L)$.  Consider the framing $\Gamma = \begin{pmatrix} n_1 & 0 \\ 0 & 0 \end{pmatrix}$.  For sufficiently large $n_1$, we have that   
\[
\widehat{\hfbold}(Y_{\Gamma}(L),[\mathbf{s}]) \cong 
\xymatrix{
H_*([00_{\mathbf{s}}] \ar[r]^{\Psi^{K_2}} & [01_{\mathbf{s}}]).
}
\]
\end{lemma}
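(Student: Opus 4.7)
The plan is horizontal truncation in the $K_1$-direction, in the spirit of Proposition~\ref{stabilizationinvariance}, collapsing the infinite-product structure of $\widehat{\C}(\hyperbox, \Gamma, [\mathbf{s}])$ down to the claimed two-term mapping cone. Because $\Lambda_2 = (0,0)$ acts trivially on $\mathbb{H}(L)$, the representatives of $[\mathbf{s}]$ are precisely $\mathbf{s}_a = \mathbf{s} + a(n_1, 0)$ for $a \in \mathbb{Z}$, and the surgery complex decomposes as the product $\prod_{a \in \mathbb{Z}} \mathcal{H}_a$ of two-dimensional hypercubes at each $\mathbf{s}_a$, with the $\Phi^{-K_1}$ and $\Phi^{-K_1 \cup \pm K_2}$ maps shifting $a \mapsto a+1$. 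I would choose $N_1$ bounding $|A_1(x)|$ over all intersection points and take $n_1 > 2N_1$, so that the canonical representative $\mathbf{s}_0 = \mathbf{s}$ with $|s_{1,0}| \leq N_1$ is unique. The key inputs are that for every $a \geq 1$ the inclusion $\incl^{+K_1}_{\mathbf{s}_a}$ is the identity (so $\Phi^{+K_1}_{\mathbf{s}_a} = \destab^{+K_1}$ is a quasi-isomorphism), and symmetrically that $\Phi^{-K_1}_{\mathbf{s}_a}$ is a quasi-isomorphism for $a \leq -1$.

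The truncation then proceeds in two stages. First, the subcomplex $\mathcal{M}_+$ generated by all $\varepsilon(M)_{\mathbf{s}_a}$ with $a \geq 1$ is closed under the differential (since $\Phi^{-K_1}$-type maps only raise $a$); filtering by $-a$ and then by $-\varepsilon_{K_1}$, each associated-graded piece is a 2D hypercube at $\mathbf{s}_a$ whose $K_1$-edge is the quasi-isomorphism $\Phi^{+K_1}_{\mathbf{s}_a}$, so the piece is acyclic and hence so is $\mathcal{M}_+$. Second, in the quotient $\widehat{\C}/\mathcal{M}_+$, let $\mathcal{N}$ be the subcomplex generated by $(1, \varepsilon_{K_2})_{\mathbf{s}_a}$ for $a \leq 0$ together with $(0, \varepsilon_{K_2})_{\mathbf{s}_a}$ for $a \leq -1$; a routine check shows all differentials leaving these generators remain inside $\mathcal{N}$. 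Filter $\mathcal{N}$ by the increasing filtration $\mathcal{F}'(x) = -a + \varepsilon_{K_1}$, bounded below by $1$; the surviving differentials in the associated graded are $\partial$, $\Phi^{\pm K_2}$, $\Phi^{-K_1}$, and $\Phi^{-K_1 \cup \pm K_2}$, and these decompose $E_0$ into diagonal slabs indexed by $c = -a + \varepsilon_{K_1} \geq 1$. The slab at level $c$ is the 2D subhypercube on $(0, \varepsilon_{K_2})_{\mathbf{s}_{-c}}$ and $(1, \varepsilon_{K_2})_{\mathbf{s}_{1-c}}$ whose $K_1$-edge is the quasi-isomorphism $\Phi^{-K_1}_{\mathbf{s}_{-c}}$ (since $-c \leq -1$), so each slab is acyclic and hence so is $\mathcal{N}$.

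The residual complex $(\widehat{\C}/\mathcal{M}_+)/\mathcal{N}$ is generated only by $00_\mathbf{s}$ and $01_\mathbf{s}$: every $K_1$-direction map from these two vertices targets either $\mathcal{M}_+$ (via $\Phi^{-K_1}$ shifting into $\mathbf{s}_1$) or $\mathcal{N}$ (via $\Phi^{+K_1}$ and the $\varepsilon_{K_1} = 1$ diagonals), so only $\partial$ and $\Psi^{K_2}$ survive. This identifies $\widehat{\C}(\hyperbox, \Gamma, [\mathbf{s}])$ up to quasi-isomorphism with $\mathrm{Cone}(\Psi^{K_2} : 00_\mathbf{s} \to 01_\mathbf{s})$, whose homology (via the trivial $\varepsilon_{K_2}$-filtration) is $H_*([00_\mathbf{s}] \xrightarrow{\Psi^{K_2}} [01_\mathbf{s}])$; combined with Theorem~\ref{surgerytheorem} this gives the claim. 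The main obstacle is designing the second-stage filtration $\mathcal{F}'$ so that the \emph{shifting} map $\Phi^{-K_1}$ (rather than $\Phi^{+K_1}$) is what survives in the associated graded, allowing one to pair up surviving generators diagonally across different values of $a$; a secondary technical point is verifying convergence of the spectral sequence on the direct product, which is assured because $\mathcal{F}'$ is bounded below on $\mathcal{N}$, or alternatively by truncating the product to $a \geq -M$ and passing to a limit.
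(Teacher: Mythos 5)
Your argument is correct, but it takes a genuinely different route from the paper's. The paper does no truncation of the two-component surgery complex at all: it identifies the two-term complex $[00_{\mathbf{s}}]\xrightarrow{\Psi^{K_2}}[01_{\mathbf{s}}]$ with the mapping cone of the reversed two-handle cobordism maps $F_{W,\mathfrak{t}_+}+F_{W,\mathfrak{t}_-}$ via Theorem~\ref{largesurgeries} and Proposition~\ref{mappingconeidentification}, then re-presents $Y_{\Gamma}(L)$ as $0$-surgery on the nullhomologous knot $K_2\subseteq S^3_{n_1}(K_1)$ (using Remark~\ref{linksurgerygeneralization}) and applies the same two results to that presentation, so that $\widehat{\hfbold}(Y_{\Gamma}(L),[\mathbf{s}])$ is also computed by a cone of cobordism maps $F_{W,\tilde{\mathfrak{t}}_{\pm}}$ between the same two groups; the proof closes by checking $\tilde{\mathfrak{t}}_{\pm}=\mathfrak{t}_{\pm}$ from the construction of the Spin$^c$ structures in Section 10 of \cite{hflz}. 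You instead stay entirely inside $\widehat{\C}(\hyperbox,\Gamma,[\mathbf{s}])$ and run a horizontal truncation in the $K_1$-direction in the style of Proposition~\ref{stabilizationinvariance}; this is self-contained and purely algebraic (no appeal to cobordism maps or to the surgery formula in an ambient manifold other than $S^3$), at the cost of a longer argument and the convergence caveat for unbounded filtrations on direct products, which you flag and which the paper itself also waves through in its own truncations, whereas the paper's route is shorter and makes the independence of the two surgery presentations transparent, but leans on Theorem 10.2 of \cite{hflz} for both presentations. Two small points to tidy in your write-up: take ``sufficiently large $n_1$'' relative to $|s_1|+N_1$ rather than assuming $|s_1|\leq N_1$, since $\mathbf{s}$ is an arbitrary fixed element of $\mathbb{H}(L)$ (your truncation then goes through verbatim); and in each acyclic slab both $K_1$-direction edges, on the $\varepsilon_{K_2}=0$ and $\varepsilon_{K_2}=1$ rows, must be quasi-isomorphisms (they are, under the largeness hypothesis), while the final identification $H_*(Cone(\Phi^{+K_2}+\Phi^{-K_2}))\cong H_*([00_{\mathbf{s}}]\to[01_{\mathbf{s}}])$ is an ungraded isomorphism over $\mathbb{F}$, consistent with the paper's rank-only conventions.
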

\begin{proof}
First, by Theorem~\ref{largesurgeries}, we may identify each $[00_{(s_1,s_2)}]$ with the Heegaard Floer homology of large surgery on both components of $L$, say $\widetilde{\Gamma} = (n_1,n_2)$, in the Spin$^c$ structure $\mathfrak{s}_{\widetilde{\Gamma}} = [(s_1,s_2)] \in \mathbb{H}(L)/H(L,\widetilde{\Gamma})$.  Similarly, $[01_{(s_1,s_2)}]$ is the Heegaard Floer homology of $n_1$-surgery on $K_1$ in the Spin$^c$ structure $\mathfrak{s}_1 = [s_1] \in \mathbb{H}(K_1)/H(K_1,\widetilde{\Gamma}|_{K_1})$ (here we have applied $\psi^{K_2}$ to $(s_1,s_2)$).  Recall that the maps $\Phi^{\pm K_2}$ correspond to cobordism maps for certain Spin$^c$ structures, say $\mathfrak{t}_{\pm}$, on the reversed 2-handle addition $W$ from $S^3_{n_1}(K_1)$ to $S^3_{\widetilde{\Gamma}}(L)$ by Proposition~\ref{mappingconeidentification}.  

Thus we have a quasi-isomorphism between 
\[
\xymatrix{
[00_{\mathbf{s}}] \ar[r]^{\Psi^{K_2}} & [01_{\mathbf{s}}]
}
\]
and 
\[
\xymatrix{
\widehat{HF}(S^3_{\widetilde{\Gamma}}(L),\mathfrak{s}_{\widetilde{\Gamma}}) \ar[r]^{F_{W,\mathfrak{t}_+} + F_{W,\mathfrak{t}_-}} & \widehat{HF}(S^3_{n_1}(K_1),\mathfrak{s}_1).
}
\]

Working from the other side, we can try to rewrite $S^3_\Gamma(L)$.  Let's study this via the link surgery formula, but from a different perspective.  We can also use the surgery formula to study 0-surgery on the knot $K_2$ where the ambient manifold is  $S^3_{n_1}(K_1)$ (as discussed in Remark~\ref{linksurgerygeneralization}, the surgery formula still works in arbitrary manifolds as long as the link is nullhomologous).  This sequence of surgeries will also give $S^3_{\Gamma}(L)$.  Again, by Proposition~\ref{mappingconeidentification}, but this time for the alternate surgery presentation, the Heegaard Floer homology of $S^3_{\Gamma}(L)$ in the Spin$^c$ structure $\mathfrak{s}$ will be given by the homology of the mapping cone 
\[
\xymatrix{
\widehat{HF}(S^3_{\widetilde{\Gamma}}(L),\mathfrak{s}_{\widetilde{\Gamma}}) \ar
[r]^{F_{W,\tilde{\mathfrak{t}}_+} + F_{W,\tilde{\mathfrak{t}}_-}} & \widehat
{HF}(S^3_{n_1}(K_1),\mathfrak{s}_1),
}
\]    
for two Spin$^c$ structures $\tilde{\mathfrak{t}}_+$ and $\tilde{\mathfrak{t}}_-$ on the reversed 2-handle addition $W$.  However, by the construction of the Spin$^c$ structures on $W$ (see Section 10 in \cite{hflz}), the $\tilde{\mathfrak{t}}_{\pm}$ are actually the same as $\mathfrak{t}_{\pm}$ and we are done.  
\end{proof}
  
For the framing $\Lambda = \mathbf{0}$, each $[\mathbf{s}]$ in $\mathbb{H}(L_2)/H(L_2,\Lambda)\cong \text{Spin}^c(S^3_{\Lambda}(L_2))$ has a single representative $\mathbf{s}$ in  $\mathbb{H}(L_2)$.  Combining Proposition~\ref{L1calc} with the following completes the proof of Theorem~\ref{noninvariancetheorem}.

\begin{proposition}\label{L2calc} a) For all $\mathfrak{s} \neq [(0,0)]$, $\widehat{FFH}(L_2,\Lambda,\mathfrak{s}) \cong \widehat{HF}(M,\mathfrak{s})$.  b) For $\mathfrak{s} = [(0,0)]$, we have $\rk \widehat{FFH}(L_2,\Lambda,\mathfrak{s})  \neq \rk \widehat{HF}(M,\mathfrak{s})$.  
\end{proposition}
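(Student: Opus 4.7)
The plan is to compute $\widehat{FFH}(L_2,\Lambda,\mathfrak{s})$ directly from its definition as the $E_2$ page of the two-dimensional hypercube associated to $L_2 = K_1 \cup K_2$ with $K_1 = T_L \# T_R$ and $K_2 = T_R$. Since $\Lambda = \mathbf{0}$, each Spin$^c$ class $\mathfrak{s} = [(s_1,s_2)]$ has a unique representative, and the knot-surgery calculations of Section~\ref{knotsurgerycalc} immediately give $[11_{\mathbf{s}}] = \mathbb{F}$, $[10_{\mathbf{s}}] = \mathbb{F}$, $\rk [01_{\mathbf{s}}]$ equal to $3$ or $1$ according to whether $|s_1| = 1$, together with the identifications $\Psi^{K_1}_2 := (\Psi^{K_1})_*|_{[01]\to[11]} = \Psi^{T_L\# T_R}_{s_1}$ and $\Psi^{K_2}_2 := (\Psi^{K_2})_*|_{[10]\to[11]} = \Psi^{T_R}_{s_2}$, each vanishing exactly when the corresponding Alexander coordinate is zero.

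The remaining corner $[00_{\mathbf{s}}]$ together with the map $\Psi^{K_2}_1: [00_{\mathbf{s}}] \to [01_{\mathbf{s}}]$ I extract from Lemma~\ref{largetruncs} with $\Gamma = \begin{pmatrix} n_1 & 0 \\ 0 & 0 \end{pmatrix}$ and $n_1 \gg 0$: because $K_2$ retains framing $0$, the reverse handleslide preserves the $0$-framed pushoff and yields $S^3_\Gamma(L_2) \cong S^3_{n_1}(T_L) \# S^3_0(T_R)$. The K\"unneth formula together with the computations in Section~\ref{knotsurgerycalc} then reads off $\rk H_*([00_{\mathbf{s}}] \to [01_{\mathbf{s}}])$ in each Spin$^c$ class; at $\mathbf{s}=(0,0)$ this rank is $3 \cdot 2 = 6$, which combined with $\rk [01_{(0,0)}] = 1$ forces $\rk [00_{(0,0)}] = 5 + 2\rk \Psi^{K_2}_1 \in \{5,7\}$.

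Granting these inputs, part (b) follows by a short kernel-versus-image count. At $\mathfrak{s} = [(0,0)]$ both $\Psi^{K_1}_2$ and $\Psi^{K_2}_2$ vanish, so $d_1$ is determined entirely by the pair $(\Psi^{K_1}_1, \Psi^{K_2}_1): [00] \to [10] \oplus [01]$, and a direct computation gives
\[
\rk \widehat{FFH}(L_2,\Lambda,[(0,0)]) \;=\; \rk [00_{(0,0)}] + 3 - 2r,
\]
where $r$ is the rank of this pair. Enumerating the admissible values (constrained by $r\in\{0,1,2\}$ and $r \geq \rk \Psi^{K_2}_1$) produces $\rk \widehat{FFH}(L_2,\Lambda,[(0,0)]) \in \{6,8\}$, in particular different from $4$. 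Combined with Proposition~\ref{L1calc}, this establishes Theorem~\ref{noninvariancetheorem} without any further work.

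For part (a) the same template gives vanishing of $\widehat{FFH}(L_2,\Lambda,\mathfrak{s})$ at each $\mathfrak{s}\neq[(0,0)]$: when $s_2 \neq 0$ the K\"unneth computation collapses $\Psi^{K_2}_1$ to an isomorphism, and the relation $\Psi^{K_1}_2 \circ \Psi^{K_2}_1 = \Psi^{K_2}_2 \circ \Psi^{K_1}_1$ from $d_1^2 = 0$ forces $\Psi^{K_1}_1$ to have a specific rank that makes $E_2$ acyclic. The main obstacle is the subcase $\mathbf{s} = (s_1,0)$ with $s_1 \neq 0$: here $\Psi^{K_2}_2 = 0$ makes the $d_1^2 = 0$ relation vacuous on $\Psi^{K_1}_1$, and Lemma~\ref{largetruncs} gives no direct control of $\Psi^{K_1}_1$ because the framing $\Gamma = (0,n_2)$ needed for a symmetric large-surgery identification does not yield a manifold expressible as a connect sum. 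I plan to close this gap by invoking the Framed Floer surgery exact triangle (Corollary~\ref{ffles}) with $K = K_1$, using $\widehat{FFH}(K_2,0) \cong \widehat{HF}(S^3_0(T_R))$ as the known input to determine $\Psi^{K_1}_1$ by exactness.
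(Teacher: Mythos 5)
Your part (b) numerology (yielding rank in $\{6,8\}$, hence $\neq 4$) is fine \emph{granted} its input, but that input contains the step the paper is careful about and you skip: you assert that the K\"unneth formula ``reads off'' the rank of $H_*([00_{\mathbf{s}}]\to[01_{\mathbf{s}}])$ in each Spin$^c$ class and in particular that it equals $3\cdot 2=6$ at $\mathbf{s}=(0,0)$. Lemma~\ref{largetruncs} identifies this cone with $\widehat{HF}(S^3_{(n,0)}(L_2),[\mathbf{s}])$ only in the labeling of Spin$^c$ structures coming from the $L_2$ surgery presentation, whereas the K\"unneth computation lives in the labeling of $S^3_n(T_L)\#S^3_0(T_R)$; the two are matched only through the handleslide diffeomorphism, whose effect on Spin$^c$ structures you never track. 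What the K\"unneth argument gives directly is that rank $6$ occurs at \emph{some} $(s_1,s_2)$. The paper pins this down to $(0,0)$ by contradiction: if the rank-$6$ class were at $(s_1,s_2)\neq(0,0)$, then part (a) forces $\widehat{FFH}(L_2,\Lambda,[(s_1,s_2)])=0$, while a rank count using $\rk[10_{\mathbf{s}}]=\rk[11_{\mathbf{s}}]=1$ shows the $E_2$ page there has rank at least $4$. Note that this repair needs part (a) already in hand (and the same unproved Spin$^c$ bookkeeping also underlies your claim that the map $[00_{\mathbf{s}}]\to[01_{\mathbf{s}}]$ is an isomorphism whenever $s_2\neq 0$).

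Your part (a) is where the approach genuinely fails: the subcase $\mathbf{s}=(s_1,0)$, $s_1\neq 0$, is not closed by the proposed appeal to Corollary~\ref{ffles}. That exact triangle relates the total Framed Floer groups of $(K_2,0)$, $(L_2,\Lambda^0)$ and $(L_2,\Lambda^1)$; it does not isolate a single class $(s_1,0)$, does not determine the rank of the individual edge map $[00_{\mathbf{s}}]\to[10_{\mathbf{s}}]$ (your $\Psi^{K_1}_1$), and requires the third term $\widehat{FFH}(L_2,\Lambda^1)$, which you have not computed. Moreover none of this is needed: the paper's argument for part (a) never uses $[00_{\mathbf{s}}]$ or the maps out of it. For any $\mathbf{s}\neq(0,0)$ at least one of $\Psi^{T_L\#T_R}_{s_1}\colon[01_{\mathbf{s}}]\to[11_{\mathbf{s}}]$ or $\Psi^{T_R}_{s_2}\colon[10_{\mathbf{s}}]\to[11_{\mathbf{s}}]$ is surjective (the first one handles exactly your problematic subcase), so $E^0_2=0$; the only possible higher differential $d_2$ has target in filtration level $0$, hence vanishes, the depth-two spectral sequence collapses, and $\widehat{FFH}(L_2,\Lambda,\mathfrak{s})\cong E_\infty\cong\widehat{HF}(M,\mathfrak{s})$. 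Replacing your part (a) by this collapse argument, and then inserting the contradiction argument above to locate the rank-$6$ class at $(0,0)$, turns your outline into a correct proof.
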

\begin{proof}
We set $K_1 = T_L \# T_R$ and $K_2 = T_R$.  We first study the claim for $\mathfrak{s} \neq [(0,0)]$.  This follows from the calculations for $T_R$ and $T_L \# T_R$; for either knot, if $s_i \neq 0$, then $\Psi_{s_i}^{K_i}$ gives a surjection from $H_*(\widehat{\mathfrak{A}}(\hyperbox_2^{K_i},s_i))$ onto $\widehat{HF}(S^3)$.  By Remark~\ref{noninvariancerestrictedsystems},  $\Psi^{K_i}_{\mathbf{s}}$ is a surjection from $[01_{\mathbf{s}}]$ or $[10_{\mathbf{s}}]$ onto $[11_{\mathbf{s}}]$.  The $(E_1,d_1)$ complex for the $\varepsilon$-spectral sequence now has that $[11_\mathbf{s}]$ is in the image of one of the two possible $\Psi_{\mathbf{s}}^K$ maps.  Therefore, $E^0_2$ will be trivial.  This tells us $d_2$ must be 0.  Since the depth of the filtration is 2, the spectral sequence must collapse at $\widehat{FFH}$.  

It remains to show that $\widehat{FFH}(L_2,\Lambda,[(0,0)])$ is not isomorphic to $\widehat{HF}(M,[(0,0)])$.  Undoing the handleslide of $T_L$ over $T_R$ gives the framed link $(T_L \coprod T_R, (n,0))$.  Lemma~\ref{largetruncs} shows that each $\widehat{HF}(S^3_n(T_L) \# S^3_0(T_R) , \mathfrak{s})$ is isomorphic to the homology of the complex 
\[
\xymatrix{
[00_{\mathbf{s}}] \ar[r]^{\Psi^{K_2}} & [01_{\mathbf{s}}].
}
\]
Thus, there exists some $(s_1,s_2)$ such that
\begin{equation}\label{largeandzerosurgery}
\xymatrix{
H_*([00_{(s_1,s_2)}] \ar[r]^{\Psi^{K_2}} & [01_{(s_1,s_2)}])
}
\end{equation}
calculates $\widehat{HF}(S^3_n(T_L) \# S^3_0(T_R),\mathfrak{s}_1 \# \mathfrak{s}_2)$ where $\mathfrak{s}_1$ and $\mathfrak{s}_2$ are such that $\widehat{HF}(S^3_n(T_L),\mathfrak{s}_1)$ has rank 3 and  $\widehat{HF}(S^3_0(T_R),\mathfrak{s}_2)$ has rank 2.  Thus, (\ref{largeandzerosurgery}) must have rank 6 by the K\"unneth formula.

The first claim is that this must happen at $(s_1,s_2) = (0,0)$.  If not, then we have that $\widehat{FFH}(L_2,\Lambda,[(s_1,s_2)])=0$, since this agrees with  $\widehat{HF}(M,\mathfrak{s})$ when $(s_1,s_2) \neq (0,0)$.  However, both $[10_{(s_1,s_2)}]$ and $[11_{(s_1,s_2)}]$ have rank 1 from Section~\ref{rht} and Remark~\ref{noninvariancerestrictedsystems}.  Combining this with (\ref{largeandzerosurgery}), we must have that $\widehat{FFH}(L_2,\Lambda,[(s_1,s_2)])$ has rank at least 4, which is a contradiction.  

Therefore, $(s_1,s_2) = (0,0)$.  Based on our calculations, 
$\widehat{FFH}(L_2,\Lambda,[(0,0)])$ can be seen as the homology of the complex in Figure~\ref{L2complex}.

\begin{figure}[h]
\[
\xymatrix{
& [00_{(0,0)}] \ar[dl]_{\Psi_{(0,0)}^{T_L \# T_R}} \ar[dr]^{\Psi_{(0,0)}^{T_R}} \\
[10_{(0,0)}] \ar[dr]^0 & & [01_{(0,0)}] \ar[dl]_0 \\
& [11_{(0,0)}]
}
\]
\caption{The complex to calculate $\widehat{FFH}(L_2,\Lambda,[(0,0)])$}\label{L2complex}
\end{figure}

Recall that the ranks of $[10_{(0,0)}] = H_*(\widehat{\mathfrak{A}}(\hyperbox_2^{T_R},0))$ and $[11_{(0,0)}] \cong \widehat{HF}(S^3)$ are each one.  Thus, it is easy to see that the rank of this complex is at least 6.  Since the rank of $\widehat{HF}(M,[(0,0)])$ is 4, the proof is complete.  
\end{proof}

\begin{remark}\label{not4is6}
It actually follows that $\widehat{FFH}(L_2,\Lambda,[(0,0)])$ must in fact have  rank 6.  By depth arguments, $d_2$ is the only nontrivial higher differential in the $\varepsilon$-spectral sequence.  The only possibility is that $d_2$ must be non-zero from $[00_{(0,0)}]$ to $[11_{(1,1)}]$.  In particular, the rank of $d_2$ must be exactly 1.  Since $E_3 \cong E_\infty$ has rank 4, $\widehat{FFH}$ is rank 6.  
\end{remark}

\section{Framed Floer Homology and Mirrors}\label{mirrorsection}
An essential element in the proof of Theorem~\ref{noninvariancetheorem} was that the $d_2$ differential could be non-zero on $\widehat{FFH}(L_2,\Lambda,(0,0))$ because the maps $\Psi^{T_R}$ and $\Psi^{T_L \# T_R}$ were zero into $E^0_1$.  On the other hand, for $T_L$ the corresponding maps surjected onto $E^0_1$, making $E^0_2$ trivial.  By depth arguments there could be no higher differentials in the $\varepsilon$-spectral sequence.  With this in mind, we are ready to build our counterexample to mirror invariance.  

\begin{proof}[Proof of Proposition~\ref{nonmirrortheorem}]
Suppose $L = L_2$ is the link from the previous section, where we have handleslid the left-handed trefoil over the right-handed trefoil.  We keep $\Lambda = 0$ to have $\Lambda = -\Lambda$.  Therefore, $\bar{L}$ consists of $K_1 = T_L \# T_R$ and $K_2 = T_L$.

The key step is to establish that $\widehat{FFH}(\bar{L},-\Lambda,\mathfrak{s}) \cong \widehat{HF}(S^3_{-\Lambda}(\bar{L}),\mathfrak{s})$.  Fix any $\mathfrak{s} \in \mathbb{H}(\bar{L})$.  In the complex for the Framed Floer homology in this Spin$^c$ structure, there is a subcomplex 
\[
\xymatrix{
[10_{\mathfrak{s}}] \ar[r]^{\Psi^{T_L}} & [11_{\mathfrak{s}}].
}
\]
We know that $\Psi^{T_L}$ is always a surjection from the calculations in Section~\ref{lht}.  As we have seen, this implies that $\widehat{FFH}(\bar{L},-\Lambda,\mathfrak{s}) \cong \widehat{HF}(S^3_{-\Lambda}(\bar{L}),\mathfrak{s})$ for each Spin$^c$ structure.  Since we are ignoring gradings, this is isomorphic to $\widehat{HF}(S^3_\Lambda(L),\mathfrak{s})$ (Proposition 2.5 in \cite{hfpa}).  We compare this calculation to Proposition~\ref{L2calc} to see that this group is not isomorphic to $\widehat{FFH}(L,\Lambda,\mathfrak{s})$ if and only if $\mathfrak{s} = (0,0)$.
\end{proof}

\section{Surgery Exact Sequences}\label{surgeryexactsection}
In this section, we give a proof of the surgery exact triangle, analogous to the one found in Theorem 1.7 of \cite{hfpa}, but using the link surgery formula.  The surgery exact triangle will come from a short exact sequence of chain complexes where there are no counts of holomorphic polygons in the maps; in fact, everything is defined purely in terms of multiplications by $U$.  For simplicity, we will only present the proof for the hat-flavor; the proof for the other flavors is very similar and only requires adding in more multiplications by $U$.  

Let $(L',\Lambda')$ be an $(n-1)$-component framed link.  Suppose that $K$ is a nullhomologous knot in $Y_{\Lambda'}(L')$.  Define $L = L' \cup K$, so $K$ is the $n$th knot; take $\Lambda^r$ to be the framing for $L$ such that $\Lambda|_{L'} = \Lambda'$ and $\Lambda|_K$ is $r$.  If $L' = \emptyset$, then $\Lambda^r = r$.  

\subsection{A Single Knot}
To illustrate the proof of Theorem~\ref{sestheorem}, we will first do a simple case which outlines how this should go in general.  This is due to Manolescu.

\begin{proposition}\label{knotexact}
Let $K$ be a knot in $Y$ and fix a complete system of hyperboxes, $\hyperbox$, for $K$.  There is a short exact sequence of $\varepsilon$-filtered chain complexes
\[
0 \longrightarrow \widehat{\C}(\hyperbox|_{\emptyset}) \longrightarrow \widehat{\C}(\hyperbox,0) \longrightarrow \widehat{\C}(\hyperbox,1) \longrightarrow 0
\] which induces a short exact sequence on the $\widehat{FFC}$-complexes.
\end{proposition}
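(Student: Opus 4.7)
The strategy is to realize the short exact sequence by direct construction, using that for a single knot the complex $\widehat{\C}(\hyperbox,\lambda)$ is a one-dimensional hypercube. Unpacking the definitions,
\[
\widehat{\C}(\hyperbox,\lambda) = \bigl(A \oplus B,\; \partial_A + \partial_B + \Phi_\lambda\bigr),
\]
with $A = \prod_{s \in \mathbb{Z}} \widehat{\mathfrak{A}}(\hyperbox^K, s)$ at $\varepsilon = 0$, $B = \prod_{s \in \mathbb{Z}} B_s$ at $\varepsilon = 1$, each $B_s$ canonically identified with $\widehat{\mathbf{CF}}(Y) = \widehat{\C}(\hyperbox|_\emptyset)$, and $\Phi_\lambda$ sending $x \in \widehat{\mathfrak{A}}(\hyperbox^K, s)$ to $\Phi^{+K}_s(x) \in B_s$ together with $\Phi^{-K}_s(x) \in B_{s+\lambda}$. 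So $\widehat{\C}(\hyperbox,0)$ and $\widehat{\C}(\hyperbox,1)$ have identical underlying vector spaces and differ only in where the $\Phi^{-K}$ contribution lands.

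For the injection I take the diagonal $\iota \colon \widehat{\mathbf{CF}}(Y) \to \widehat{\C}(\hyperbox,0)$, $\iota(y)_s = y$ for every $s$; it is injective, a chain map (since $\partial_B$ acts componentwise and no differential leaves the $B$-column), and $\varepsilon$-filtered into filtration level $0$. Geometrically, $\iota$ realizes the sum of $2$-handle cobordism maps from $\widehat{\mathbf{CF}}(Y)$ into the various Spin$^c$ summands of $\widehat{\mathbf{CF}}(Y_0(K))$. For the surjection $\pi \colon \widehat{\C}(\hyperbox,0) \to \widehat{\C}(\hyperbox,1)$, the natural guess is the finite-difference operator $\pi|_B(y)_s = y_s + y_{s-1}$ on the $B$-column, paired with the identity on $A$: its kernel on $B$ is precisely $\iota(\widehat{\mathbf{CF}}(Y))$, it is surjective on products, and it preserves the $\varepsilon$-filtration. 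The main obstacle is that this choice fails to commute with the differential, producing the defect
\[
\bigl(\pi|_B \circ \Phi_0 + \Phi_1\bigr)(x)_t = \Phi^{-K}_t(x_t) + \Phi^{+K}_{t-1}(x_{t-1}),
\]
so to make $\pi$ a chain map one must replace $\pi|_A = \mathrm{id}_A$ by $\mathrm{id}_A + K$ for a homotopy $K \colon A \to B$ with $\partial_B K + K \partial_A$ equal to this defect.

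Constructing such a $K$ is the heart of the proof. Since each $\Phi^{\pm K}_t$ is individually a chain map, the defect is null-homotopic only for nontrivial algebraic reasons; my plan is to exploit the explicit formula for $\incl^{\pm K}_s$ in the hat theory, under which each generator $x$ contributes to $\Phi^{\pm K}_s(x)$ only in a finite range of $s$. A natural candidate is then to build $K$ componentwise as a finite partial sum interpolating between $\Phi^{-K}_t(x_t)$ and $\Phi^{+K}_{t-1}(x_{t-1})$; alternatively, one can bypass an explicit $K$ by producing a vector-space splitting $B = \Delta(\widehat{\mathbf{CF}}(Y)) \oplus B'$ and identifying $A \oplus B'$ with $\widehat{\C}(\hyperbox,1)$ as a chain complex after a suitable reindexing of $\mathbb{Z}$. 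Once $\iota$ and $\pi$ are in place, the chain-level short exact sequence splits on each $\varepsilon$-stratum into $0 \to \widehat{\mathbf{CF}}(Y) \xrightarrow{\Delta} B \xrightarrow{\pi|_B} B \to 0$ at $\varepsilon = 1$ and $0 \to 0 \to A \xrightarrow{\mathrm{id}} A \to 0$ at $\varepsilon = 0$. The $\widehat{FFC}$-level statement then follows because any $\partial_B$-cycle $c = (c_s)_s$ in the quotient admits the telescoping lift $\tilde c_s = \sum_{1 \leq t \leq s} c_t$ for $s \geq 0$ (with the analogous truncated sum for $s < 0$), which is itself a cycle as a finite sum of cycles; hence the Bockstein connecting homomorphism on $E_1$ vanishes in every filtration degree, and the long exact sequences on $\widehat{FFC}$-complexes collapse to the advertised short exact sequence.
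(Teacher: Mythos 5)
Your setup matches the paper through the computation of the defect: the diagonal map onto the $B$-column is exactly the paper's first map $f$, and the finite-difference operator $y_s \mapsto y_s + y_{s-1}$ is exactly the paper's bottom map $g^1$. The gap is in the proposed repair. No homotopy $K\colon A \to B$ with $\partial_B K + K\partial_A$ equal to your defect can exist: applying the defect to an element supported in a single $A_s$ produces $\Phi^{-K}_s(x_s)$ in the $s$-slot and $\Phi^{+K}_s(x_s)$ in the $(s+1)$-slot, so if the defect were null-homotopic then every $(\Phi^{\pm K}_s)_*$ would vanish on homology; but for $s$ at least the top Alexander grading of the generators, $\incl^{+K}_s$ is the identity and $\Phi^{+K}_s = \destab^{+K}$ is a quasi-isomorphism onto $\widehat{\mathbf{CF}}(\hyperbox^\emptyset)$, whose homology $\widehat{\mathbf{HF}}(Y)$ is nonzero. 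Relatedly, the finiteness you invoke is false: in the hat theory $\Phi^{+K}_s(x) = \destab^{+K}(x)$ for \emph{every} $s \geq A(x)$ and $\Phi^{-K}_s(x) = \destab^{-K}(x)$ for every $s \leq A(x)$, so each generator contributes over a half-infinite range of $s$ and the ``finite partial sum'' candidate for $K$ is not available. Thus the heart of your proof, as proposed, cannot be carried out.

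The missing idea is that the $\varepsilon$-preserving component $A \to A$ of the surjection must itself be changed, rather than corrected by a filtration-lowering term. The paper's map sends $x \in {_0}0_s$ to $(\proj_s^-(x),\, x,\, \proj_s^+(x)) \in {_1}0_{s-1} \oplus {_1}0_s \oplus {_1}0_{s+1}$, where $\proj_s^{\pm} = (\incl_{s\pm1}^{\pm K})^{-1}\circ \incl_s^{\pm K}$ truncates according to the Alexander grading; the identities $\Phi^{\pm K}_{s\pm1}\circ\proj_s^{\pm} = \Phi^{\pm K}_s$ and $\Phi^{\mp K}_{s\pm1}\circ\proj_s^{\pm} = 0$ are exactly what absorb your defect, and one then checks this top map is bijective (injectivity by working within a fixed Alexander grading, surjectivity by a one-sided telescoping sequence very much like your lift), so that the kernel of the total map is precisely the diagonal. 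Your alternative suggestion---splitting off the diagonal and identifying $A \oplus B'$ with $\widehat{\C}(\hyperbox,1)$ ``after reindexing''---is precisely where this content would have to appear and is not supplied. Your final paragraph (telescoping lifts, equivalently injectivity of $f_1$, forcing the connecting map on $E_1$ to vanish) agrees with the paper's conclusion, but it only becomes usable once a correct, $\varepsilon$-filtered surjection has been constructed.
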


We first need some background.  There are two types of complexes in $\widehat{\C}(\hyperbox,\Lambda)$ when working solely with a knot.  There are the $0_s$ and the $1_s$; the $0_s$ are the $\widehat{\mathfrak{A}}(\hyperbox^K,s)$-complexes, while the $1_s$ are simply copies of $\widehat{\mathfrak{A}}(\hyperbox^\emptyset) = \widehat{\C}(\hyperbox|_\emptyset)$.  From the perspective of the integer surgeries formula for knots, the $0_s$ are the $A_s$ and the $1_s$ are the $B_s$.  We choose $U_i = 0$, where $K$ has been assigned color $i$.  

Recall that $A(x)$ denotes the Alexander grading on $x$, satisfying $A(x) - A(y) = n_z(\phi) - n_w(\phi)$ for any $\phi \in \pi_2(x,y)$.  Define maps $\proj_s^{\pm} :0_s \longrightarrow 0_{s \pm 1}$ on the chain group generators as follows.

\begin{equation*}
\proj_s^+(x)  = \left\{
\begin{array}{rl}
x & \text{if } A(x) \leq s \\
0 & \text{if } A(x) > s
\end{array} \right. \text{ and } 
\proj_s^-(x)  = \left\{
\begin{array}{rl}
x & \text{if } A(x) \geq s \\
0 & \text{if } A(x) < s
\end{array} \right. 
\end{equation*}

It is not hard to see that these are chain maps.  In fact, these are just the maps $(\incl_{s \pm 1}^{\pm K})^{-1} \circ \incl_s^{\pm K}$ (while $\incl_{s \pm 1}^{\pm K}$ isn't necessarily invertible, an inverse can be defined on the image of $\incl_s^{\pm K}$).  For notation, we will use $_r \varepsilon_s$ to mean the complex $\varepsilon_s$ in $\widehat{\C}(\hyperbox,\Lambda^r)$.  It is important to remember that the chain groups $\varepsilon_s$ are the same for all $s$ - it is the differential that distinguishes them.  Furthermore, we will often not distinguish when $\Phi^{\pm K}$ has domain given by $_0 0_s$ or $_1 0_s$.  This will be clear from the context.    

\begin{remark}\label{projcompsvanish}
The $\proj$ maps are defined to behave well with the $\Phi$ maps.  For a generator $x \in 0_s$, we have that $\Phi_{s\pm 1}^{\pm K} \circ \proj_s^{\pm}(x) = \Phi^{\pm K}_s(x)$.  Here, the domains and ranges don't necessarily match up, but we can still compare the generators since all $0_s$ share the same generators and all $1_s$ share the same generators.    For the opposite signs, 
\[
\Phi_{s \pm 1}^{\mp K} \circ \proj_s^{\pm} = \incl_{s \pm 1}^{\mp K} \circ \proj_s^{\pm} = 0 .
\]
The reason for these strange comparisons will make sense shortly.
\end{remark}

\begin{proof}[Proof of Proposition~\ref{knotexact}]
We first construct $f: \widehat{\C}(\hyperbox|_\emptyset) \longrightarrow \widehat{\C}(\hyperbox,0)$.  In $\widehat{\C}(\hyperbox,0)$, each $_01_s$ is a copy of $\widehat{\C}(\hyperbox|_\emptyset) = \widehat{\mathfrak{A}}(\hyperbox^\emptyset)$, so we choose $f_s$ to be the map which is simply the identity from $\widehat{\C}(\hyperbox|_\emptyset)$ to $_0 1_s$.  We define $f = \prod_{s \in \mathbb{H}(K)} f_s$.  This is clearly an injective chain map which respects the $\varepsilon$-filtration.  Note that by Theorem 11.1 of \cite{hflz}, this corresponds to the usual sum of cobordism maps used in constructing surgery exact triangles.

We now need to define $g:\widehat{\C}(\hyperbox,0) \longrightarrow \widehat{\C}(\hyperbox,1)$.  This will split as the sum of two maps 
\[
g^0: \prod_{s \in \mathbb{H}(K)} {_0} 0_s \longrightarrow \prod_{s \in \mathbb{H}(K)} {_1} 0_s, 
\]
which we call the {\em top} map, and 
\[
g^1: \prod_{s \in \mathbb{H}(K)} {_0} 1_s \longrightarrow \prod_{s \in \mathbb{H}(K)} {_1} 1_s, 
\]
which is the {\em bottom} map.  In the notation of the integer surgery formula for knots, this says $A_s$'s go to $A_s$'s and $B_s$'s go to $B_s$'s.  
Each $g^i$ will be defined componentwise over $\mathbb{H}(K)$, so we refer the reader to Figure~\ref{surgerychainmap} for a visualization of the setup.  We will not keep track of the components of elements which are 0.  For example, if $x \in {_0\varepsilon_{s_0}}$, let $\overline{x}$ be the element of $\widehat{C}(\hyperbox,0)$ which is $x$ in $_0 \varepsilon_{s_0}$ and 0 elsewhere.  We will use $g(x)$ to really mean $g(\overline{x})$; furthermore, we would not write the components of $g(\overline{x})$ which are zero.  

\begin{figure}
\[
\xymatrix{
{_00_s} \ar@/_1pc/[d]_{+} \ar@/^1pc/[d]^{-} \ar@/^1pc/[rr]^{\proj_s^-} \ar@/^3pc/[rrr]_{id} 
\ar@/^4pc/[rrrr]^{\proj_s^+}  & 
& {_10_{s-1}} \ar[d]_{+} \ar[dr]^{-} & {_10_s} \ar[d]_{+} \ar[dr]^{-} & {_10_{s+1}} \ar[d]_{+} \ar[dr]^{-} & \\
{_01_s} \ar@/_1pc/[rrr]_{id} \ar@/_3pc/[rrrr]_{id}  &&  {_11_{s-1}} & {_11_s} & {_11_{s+1}} &
}
\]
\caption{A local picture for $g$ at a fixed $s$ in $\mathbb{H}(K)$}\label{surgerychainmap}
\end{figure}
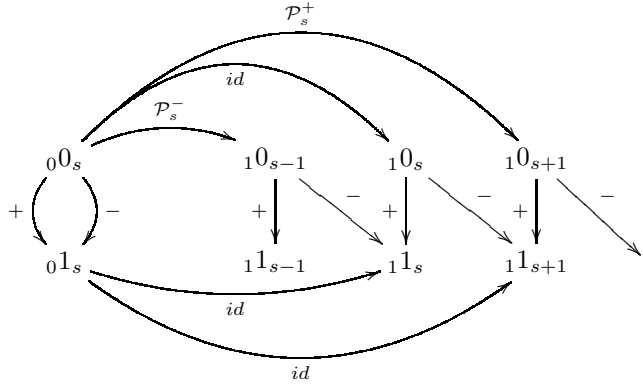

Let's first do the bottom map, $g^1$.  For each $s$, the map $g^1_s$ will be 
\[
g^1_s: {_0} 1_s \longrightarrow {_1}1_s \oplus {_1}1_{s+1} \text{ where } g^1_s(x)=(x,x).
\]
This is clearly a chain map, since $_r1_s = \widehat{\mathfrak{A}}(\hyperbox^\emptyset)$ for all $s$ and $r$.  Again, define $g^1 = \prod_{s \in \mathbb{H}(K)} g^1_s$.  From this construction, the kernel of $g^1$ is exactly the image of $f$.  Let's see that $g^1$ is surjective.  Suppose $x$ is in $_1 1_s$; define a sequence in the domain of $g^1$ by 
\begin{equation*}
\tilde{x}^{(s')}  = \left\{
\begin{array}{rl}
x & \text{if } s' \geq s\\
0 & \text{if } s' < s
\end{array} \right. 
\end{equation*}
This clearly satisfies $g^1(\tilde{x}) = x$.  

Next, we would like to construct the top map $g^0$ between the $0_s$ complexes.  For each $s$, define
\[
g_s^0: {_00_s} \longrightarrow {_10_{s-1}} \oplus {_10_s} \oplus {_10_{s+1}}, \text{ where } g_s^0(x) = (\proj_s^-(x),x,\proj_s^+(x)).
\]

We must check to see that the map $g = g^0 + g^1$ is now a chain map.  
We have already considered the case where $x \in {_01_s}$.  Now suppose that $x \in {_00_s}$.  Let's use $_rD$ for the differentials on $\widehat{\C}(\hyperbox,r)$, while $\partial$ will still represent the standard differential on $\varepsilon_s$.  
This tells us that 
\[
_0D(x) = (\partial(x),\Phi_s^{+K}(x) + \Phi_s^{-K}(x)) \in {_00_s} \oplus {_01_s} \text{ for } x \in {_0\varepsilon_s}.
\]
Therefore, 
\begin{align*}
g(_0D(x)) = & \:(\proj_s^- \partial(x),\partial(x),\proj_s^+ \partial(x), \\
&\qquad \Phi_s^{+K}(x) + \Phi_s^{-K}(x),\Phi_s^{+K}(x) + \Phi_s^{-K}(x)) \\ 
&\qquad \qquad \in {_10_{s-1}} \oplus {_10_s} \oplus {_10_{s+1}} \oplus {_11_s} \oplus {_11_{s+1}}.
\end{align*}
Now, $g_s(x) = (\proj_s^-(x),x,\proj_s^+(x))$ and thus
\begin{align*}
_1D(g(x)) = & \: (\partial\proj_s^-(x),\partial(x),\partial\proj_s^+(x), \\
& \qquad \Phi_{s-1}^{+K}\proj_s^-(x),\Phi_{s-1}^{-K}\proj_s^-(x) + \Phi_s^{+K}(x),\\
& \qquad \qquad \Phi_s^{-K}(x) + \Phi_{s+1}^{+K}\proj_s^+(x), \Phi_{s+1}^{-K}\proj_s^+(x)) \\
& \in {_10_{s-1}} \oplus {_10_s} \oplus {_10_{s+1}} \oplus {_11_{s-1}} \oplus {_11_s} \oplus {_11_{s+1}} \oplus {_11_{s+2}}.
\end{align*}
By Remark~\ref{projcompsvanish} and the fact that $\proj^{\pm}$ are chain maps, $g(_0D(x)) = {_1}D(g(x))$.

We will now prove injectivity of $g^0$ to show that the image of $f$ is in fact the kernel of $g$.  Let $(x_s)_{s \in \mathbb{H}(K)}$ be in the kernel of $g^0$.  Without loss of generality, all of the $x_s$ satisfy $A(x_s) = k$, for some $k$ - this is because $g^0$ preserves the Alexander filtration of generators by definition, so it suffices to check injectivity on each Alexander filtration level.  Then, we have the following equation for each $s$,
\begin{equation}\label{surgeryinjection}
g^0_s(x) = \proj_{s-1}^+(x_{s-1}) + x_s + \proj_{s+1}^-(x_{s+1}) = 0.
\end{equation}
If $s > k$, then (\ref{surgeryinjection}) becomes $x_{s-1} + x_s = 0$.  Thus, $x_s = x_k$ for all $s \geq k$ (remember that we can relate these since the chain {\em groups} $0_s$ do not depend on $s$).  A similar argument shows that $x_s = x_k$ for all $s \leq k$.     
Since $\proj_{k \pm 1}^{\mp}(x_{k \pm 1}) = 0$, (\ref{surgeryinjection}) shows that $x_k = x_s = 0$ for all $s$.

Finally, we would like to show that $g^0$ is surjective; this will complete the construction of the short exact sequence.  Again, let $x$ be a generator of ${_1}0_s$ with $A(x) = k$.  First, suppose $s < k$; the case of $s > k$ is similar. We will construct an element $\tilde{x}$ with $g^0(\tilde{x}) = x$.    For notation, $x^{(s')}$ means the generator $x$, but now sitting in ${_0}0_{s'}$.  Consider the sequence in $\prod_{s} {_0}0_s$ given by 
\begin{equation*}
\tilde{x}_{s'} = \left\{
\begin{array}{rl}
x^{(s')} & \text{if } s' \leq s\\
0 & \text{if } s' > s
\end{array}. \right. 
\end{equation*}
Note that $\proj_{s'}^+(x^{(s')}) = 0$ for $s' \leq s$, since $s < k$.  Similarly, $\proj_{s'}^-(x^{(s')}) = x^{(s'-1)}$ for $s' \leq s$.  Therefore, we must have that $g^0_{s'}(\tilde{x}) = 0$ for $s' > s$ and at each $s' < s$, there are two components of $g^0$,  $\proj_{s'+1}^-(x^{(s'+1)})$ and $x^{(s')}$.  By construction, their sum is 0.  Since the only component of $g^0$ which is non-zero into $_10_s$ is the identity on $x^{(s)}$, we have attained $g^0(\tilde{x}) = x$.       
The final case is when the generator of ${_1}0_s$ has $A(x) = k = s$.  For this, we use the constant sequence $\tilde{x}_s = x^{(s)}$ in $\prod_{s} {_0}0_s$.  A similar argument shows that $g^0(\tilde{x}) = x$.  

By construction, $f$ and $g$ are $\varepsilon$-filtered, but do not lower the filtration level.  Therefore, we obtain a short exact sequence 
\[
\xymatrix{
0 \ar[r] & (E_0(\widehat{\C}(\hyperbox^\emptyset)),d_0) \ar[r]^{f_0} & (E_0(\widehat{\C}(\hyperbox,0)),d_0) \ar[r]^{g_0} & (E_0(\widehat{\C}(\hyperbox,1)),d_0) \ar[r] & 0.
}
\]
This induces a long exact sequence between the $E_1$ pages of the spectral sequences.  However, by construction, the induced map $f_1$ on $E_1$ pages is still injective, since it is the identity componentwise.  Thus, the long exact sequence on $E_1$ pages is actually short exact; in other words, this establishes a short exact sequence for $\widehat{FFC}$.  This completes the proof.
\end{proof}

\begin{remark}
The short exact sequence in Proposition~\ref{knotexact} automatically gives a surgery exact triangle for Heegaard Floer homology when taking homology.  However, this simple short exact sequence will not quite fit into our combinatorial framework unless the ambient manifold in this construction is $S^3$; it also does not cover the Framed Floer homology exact triangle we would like.  Therefore, we will need the more general construction which follows.  We should also note that since $\widehat{FFH} \cong \widehat{HF}$ for knots, the last part of the proof of Proposition~\ref{knotexact} was unnecessary; this was done so as to elucidate the argument for the general case.      
\end{remark}

\subsection{The General Surgery Exact Sequence}
Using the framework from above, we would like to prove Theorem~\ref{sestheorem}.  The idea is simple.  In the surgery formula, we would like to simplify notation by compressing everything that does not involve $K$.  Since $\mathbb{H}(L)/H(L,\Lambda|_{L-K}) \cong \mathbb{Z} \cong \mathbb{H}(K)$, we should try to recreate the picture from the proof of Proposition~\ref{knotexact}.  In fact, this compressing idea is exactly what we used in the proof of Proposition~\ref{stabilizationinvariance}, where we summed over the complexes which did not involve the unknot $U$.  Similarly in the current case, the {\em bottom} (objects with $\varepsilon_n=1$) and {\em top} (objects with $\varepsilon_n=0$) will split into $1$'s and $0$'s corresponding to whether or not $K$ has been destabilized.  When we introduce the differentials with $\Lambda^0$ and $\Lambda^1$, the shape of these compactified complexes will now be that from Figure~\ref{surgerychainmap}.   We then define the maps in the short exact sequence by the analogous $\proj$-maps.  Now for some details.  

Recall that for a complete system $\hyperbox$ and a sublink $L'$, we can construct the complete system $\hyperbox|_{L'}$ for $L'$.  First fix $\mathbf{s} \in \mathbb{H}(L)$.  Let $\langle \mathbf{s} \rangle$ be the set of all $\mathbf{s'}$ that are attainable from $\mathbf{s}$ by adding linear combinations of $\Lambda_i$ for $i \neq n$.  Now, we can consider 
\[
\C^1_{\langle \mathbf{s} \rangle} = \prod_{\mathbf{s'} \in \langle \mathbf{s} \rangle} \bigoplus_{\varepsilon_n = 1} \varepsilon_{\mathbf{s'}} 
\]
We can think of this as beginning with some vertex complex at $\mathbf{s}$ where $K$ has been destabilized and compressing together all complexes in the `$L-K$ direction'.  Note that by Remark~\ref{restrictedsystems}, there is a natural identification of this complex with $\C(\hyperbox',\Lambda')$, which we will think of as the identity map (similar to in Proposition~\ref{knotexact}).  

We may also compress terms in the $L-K$ direction which do not yet have $K$ destabilized.  These complexes are given by 
\[
\C^0_{\langle \mathbf{s} \rangle} = \prod_{\mathbf{s'} \in \langle \mathbf{s} \rangle} \bigoplus_{\varepsilon_n = 0} \varepsilon_{\mathbf{s'}}.
\]
We will again use a subscript $_r \star$ to indicate that the object $\star$ lives in $\widehat{C}(\hyperbox,\Lambda^r)$.

\begin{remark}
If $\langle \mathbf{s} \rangle$ and $\langle \mathbf{s'} \rangle$ are different, but sit in the same Spin$^c$ structure on $Y_{\Lambda^1}(L)$, then they must be related by some multiple of $(0,\ldots,0,1)$.  Note that $(0,\ldots,0,1)$ is actually the difference between the framing vectors $\Lambda^1_n$ and $\Lambda^0_n$, since $K$ is the $n$th component. To compactify notation, let's write $(0,\ldots,0,1) = e_n$.  We also use the notation
\[
\Phi^{*} = \sum_{\vec{M} \subseteq L'} \Phi^{\vec{M}} \text{ and } \Phi^{\vec{K} \cup *} = \sum_{\vec{M} \subseteq L'} \Phi^{\vec{K} \cup \vec{M}}.
\]
Therefore, we have the familiar picture in Figure~\ref{generalsurgeryfigure} to help visualize these complexes.
\begin{figure}
\[ 
\xymatrix{
{_1}\C^0_{\langle \mathbf{s-e_n} \rangle} \ar[dd]_{\textstyle{\widehat{C}(\hyperbox,\Lambda^1) \: = \:} \scriptstyle{+K \cup *}} \ar[ddr]_{-K \cup *} & {_1\C^0_{\langle \mathbf{s} \rangle}} \ar[dd]_{+K \cup *} \ar[ddr]_{-K \cup *}& {_1\C^0_{\langle \mathbf{s+e_n} \rangle}} \ar[dd]_{+K \cup *} \ar[ddr]^{-K \cup *}\\\\
{_1\C^1_{\langle \mathbf{s-e_n} \rangle}} & {_1\C^1_{\langle \mathbf{s} \rangle}} & {_1\C^1_{\langle \mathbf{s+e_n} \rangle}} &
}
\]
\caption{Link surgery complex recast like the knot surgery formula}\label{generalsurgeryfigure}
\end{figure}
\end{remark}

\begin{proof}[Proof of Theorem~\ref{sestheorem}] We are ready to construct the appropriately filtered short exact sequence.  To visualize the setup, refer to Figure~\ref{generalsurgeryfigure}.  As before, the first map $f$ is given by the product of $f_{\langle s \rangle}$, where each $f_{\langle s \rangle}$ is the identity map from  $\widehat{\C}(\hyperbox',\Lambda')$ to $\C^1_{\langle\mathbf{s}\rangle}$.  This is still clearly injective.  Furthermore, $f$ will still preserve the filtration level and induce injections on the $E_0$ pages and $E_1$ pages.  As the maps $f$ and $g$ will not lower the filtration levels, it suffices to establish that $f$ and $g$ form a short exact sequence; the result for the $E_1$ pages will immediately follow by the same argument as in Proposition~\ref{knotexact}.  

The next step is to construct $g$.  For each $\varepsilon$ with $\varepsilon_n = 1$, define $g^{\varepsilon}_{\mathbf{s}} : {_0\varepsilon_{\mathbf{s}}} \longrightarrow {_1}\varepsilon_{\mathbf{s}} \oplus {_1}\varepsilon_{\mathbf{s}+e_n}$ by $x \mapsto (x,x)$.
It is again easy to see that 
\[
g^1 = \prod_{\mathbf{s} \in \mathbb{H}(L)} \bigoplus_{\varepsilon_n = 1} g^{\varepsilon}_{\mathbf{s}}
\]
surjects onto the bottom of $\widehat{\C}(\hyperbox,\Lambda^1)$ and has the image of $f$ as its kernel.  

Now, when $\varepsilon_n = 0$, define 
\begin{align*}
g^{\varepsilon}_{\mathbf{s}}\!: \; &{_0}\varepsilon_{\mathbf{s}} \longrightarrow {_1}\varepsilon_{\mathbf{s}-e_n} \oplus {_1}\varepsilon_{\mathbf{s}} \oplus {_1}\varepsilon_{\mathbf{s}+e_n}\\
& x \mapsto (\proj_{\mathbf{s}}^-(x),x,\proj_{\mathbf{s}}^+(x)).
\end{align*}
Here, we still have the same $\proj$ as before.  By this, we mean the following: first, choose $U_n$ to be the variable set to 0.  Now, define  
\[
\proj_{\mathbf{s}}^{\pm}(x) = (\incl_{\mathbf{s}\pm e_n}^{\pm K})^{-1} \circ \incl_{\mathbf{s}}^{\pm K}(x) \text{ for } x \in {_0}\varepsilon_{\mathbf{s}}.
\]
It is not hard to see that the $\proj^{\pm}$ are still chain maps and commute with all other $\Phi^{\vec{M}}$ when $K$ is not in $M$.  This follows from (\ref{destabinclcommute}).  We take $g^0$ to be the necessary product over $\mathbf{s}$ and $\varepsilon$.  

Following suit with the single-component case, it suffices to show two things.  One is that $g = g^0 + g^1$ is an authentic chain map, while the other is that $g^0$ is a bijection.  The fact that $g^0$ is a bijection follows from the same arguments as for Proposition~\ref{knotexact}.  Thus, the proof is complete after we establish the following lemma. 
\end{proof}
\begin{lemma} $g$ is a chain map. 
\end{lemma}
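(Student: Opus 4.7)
The plan is to mimic the single-knot verification from the proof of Proposition~\ref{knotexact}, checking the chain map condition $g \circ {_0D} = {_1D} \circ g$ on generators, split by whether the generator sits in a vertex complex with $\varepsilon_n = 1$ (bottom) or $\varepsilon_n = 0$ (top).

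The bottom case is immediate. For $x \in {_0}\varepsilon_{\mathbf{s}}$ with $\varepsilon_n = 1$, the only differential components acting on $x$ are $\Phi^*$ (including $\partial$), carrying Spin$^c$ shifts $\Lambda_{L,\vec{M}}$ for $\vec{M} \subseteq L'$. None of this data depends on the $(n,n)$-entry of the framing matrix, so ${_0D}$ and ${_1D}$ agree on the bottom. Since $g$ is the diagonal map $x \mapsto (x,x) \in {_1}\varepsilon_{\mathbf{s}} \oplus {_1}\varepsilon_{\mathbf{s}+e_n}$, the chain map condition reduces to applying the same differential to two copies of $x$.

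The top case is the substantive one. Decompose ${_0D}(x) = \Phi^*(x) + \Phi^{+K \cup *}(x) + \Phi^{-K \cup *}(x)$, where the last two summands land at $\varepsilon'_n = 1$ and the $-K$ piece additionally shifts the Spin$^c$ parameter by $\Lambda^0_n$; on the other side, expand ${_1D}$ on $g(x) = (\proj^-_{\mathbf{s}}(x), x, \proj^+_{\mathbf{s}}(x))$ sitting at Spin$^c$ levels $\mathbf{s}-e_n$, $\mathbf{s}$, $\mathbf{s}+e_n$. The horizontal terms (those landing at $\varepsilon'_n = 0$) agree because $\proj^{\pm}$ are chain maps commuting with $\partial$, and, by equation (\ref{destabinclcommute}) applied to the disjoint sublinks $\pm K$ and $\vec{M} \subseteq L'$, they commute with each $\Phi^{\vec{M}}$. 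The vertical terms rely on two key identities: the diagonal identity $\Phi^{\pm K \cup *}_{\mathbf{s} \pm e_n} \circ \proj^{\pm}_{\mathbf{s}} = \Phi^{\pm K \cup *}_{\mathbf{s}}$, which is immediate from $\proj^{\pm}_{\mathbf{s}} = (\incl^{\pm K}_{\mathbf{s} \pm e_n})^{-1} \circ \incl^{\pm K}_{\mathbf{s}}$ together with $p^{\pm K}(\mathbf{s}) = p^{\pm K}(\mathbf{s} \pm e_n)$, so the destabilization factor is unchanged; and the cross vanishing $\Phi^{\mp K \cup *}_{\mathbf{s} \pm e_n} \circ \proj^{\pm}_{\mathbf{s}} = 0$, which holds in the hat flavor because the composition unavoidably produces a positive power of $U_n$, set to zero.

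The entire matching hinges on the single observation $\Lambda^1_n - \Lambda^0_n = e_n$, which is precisely the Spin$^c$ shift built into $g^1$. At the $-K$ piece, $g({_0D}(x))$ contributes $\Phi^{-K \cup *}_{\mathbf{s}}(x)$ to both Spin$^c$ levels $\mathbf{s}+\Lambda^0_n$ and $\mathbf{s}+\Lambda^0_n+e_n$; these are reproduced on the other side by the $-K$ action of ${_1D}$ on the $\proj^-_{\mathbf{s}}(x)$ component (collapsed to $\Phi^{-K \cup *}_{\mathbf{s}}(x)$ via the diagonal identity) and on the $x$ component (whose shift by $\Lambda^1_n = \Lambda^0_n + e_n$ lands in the $+e_n$ slot), while the $-K$ action on $\proj^+_{\mathbf{s}}(x)$ is killed by the cross vanishing. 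The $+K$ matching is analogous, producing $\Phi^{+K \cup *}_{\mathbf{s}}(x)$ at Spin$^c$ levels $\mathbf{s}$ and $\mathbf{s}+e_n$ on both sides. I do not anticipate a genuine obstacle: the argument is the knot-case verification plus careful Spin$^c$-shift bookkeeping, with the restriction to the hat flavor essential only for the cross vanishing (and hence for avoiding the extra multiplications by $U$ needed to handle the other flavors).
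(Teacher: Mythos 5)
Your proposal is correct and follows essentially the same route as the paper: the same top/bottom split, the commutation of $\proj^{\pm}$ with $\partial$ and with the $\Phi^{\vec{M}}$ for $\vec{M} \subseteq L'$ via (\ref{destabinclcommute}), the diagonal identity and the cross vanishing $\Phi^{\mp K \cup *}_{\langle\mathbf{s}\pm e_n\rangle}\circ\proj^{\pm}_{\langle\mathbf{s}\rangle}=0$ (the paper phrases the latter through compositions of inclusion maps, which is the same positive-power-of-$U_n$ mechanism you cite), together with the $e_n=\Lambda^1_n-\Lambda^0_n$ bookkeeping. If anything, you make explicit the diagonal identity and the bottom case, which the paper leaves implicit.
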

Recall that if we are applying a map that goes out of the hypercube (say $\Phi^{+K}$ applied to $x$ in $\widehat{\mathfrak{A}}(\hyperbox^\emptyset)$) then the convention is that this must be zero.  From now on, instead of $\mathbf{s}$, we will only keep track of the class $\langle \mathbf{s} \rangle$ as to avoid worrying about terms that go in the $L-K$ direction; therefore, we will abuse notation with terms like $\Phi^{+K \cup *}_{\langle \mathbf{s} \rangle}$.    

\begin{proof}
Fix $x \in { }_0\varepsilon_{\mathbf{s}}$.  
We apply the differential to $x$ to obtain 
\[
_0D(x) = (\Phi_{\langle \mathbf{s} \rangle}^*(x), \Phi_{\langle \mathbf{s} \rangle}^{+K \cup *}(x) + \Phi_{\langle \mathbf{s} \rangle}^{-K \cup *}(x)) \in {_0}\C^0_{\langle \mathbf{s} \rangle} \oplus {_0}\C^1_{\langle \mathbf{s} \rangle}.
\]
This gives 
\begin{align*}
g({_0}D(x)) = (\proj_{\langle \mathbf{s} \rangle}^-&\Phi_{\langle \mathbf{s} \rangle}^*(x),\Phi_{\langle \mathbf{s} \rangle}^*(x),\proj_{\langle \mathbf{s} \rangle}^+\Phi_{\langle \mathbf{s} \rangle}^*(x), \\ &
 \qquad\qquad\qquad\Phi_{\langle \mathbf{s} \rangle}^{+K \cup *}(x) + \Phi_{\langle \mathbf{s} \rangle}^{-K \cup *}(x), \Phi_{\langle \mathbf{s} \rangle}^{+K \cup *}(x) + \Phi_{\langle \mathbf{s} \rangle}^{-K \cup *}(x)) \\
& \in {_1}\C^0_{\langle \mathbf{s} - e_n \rangle} 
\oplus {_1}\C^0_{\langle \mathbf{s} \rangle}
\oplus {_1}\C^0_{\langle \mathbf{s} + e_n \rangle} 
\oplus {_1}\C^1_{\langle \mathbf{s} \rangle}
\oplus {_1}\C^1_{\langle \mathbf{s} + e_n \rangle}.  
\end{align*}
Now, we compare with 
\begin{align*}
{_1}D(g(x)) &= {_1}D(\proj_{{\langle \mathbf{s} \rangle}}^-(x),x,\proj_{{\langle \mathbf{s} \rangle}}^+(x)) \\
	      &= (    \Phi_{{\langle \mathbf{s} - e_n \rangle}}^*\proj_{{\langle \mathbf{s} \rangle}}^-(x), \Phi_{{\langle \mathbf{s} \rangle}}^*(x), \Phi_{{\langle \mathbf{s}+ e_n \rangle}}^*\proj_{{\langle \mathbf{s} \rangle}}^+(x), \\
& \qquad\qquad \Phi_{{\langle \mathbf{s} - e_n \rangle}}^{+K \cup *}\proj_{{\langle \mathbf{s} \rangle}}^-(x), \Phi_{{\langle \mathbf{s} - e_n\rangle}}^{-K \cup *}\proj_{{\langle \mathbf{s} \rangle}}^-(x) + \Phi_{{\langle \mathbf{s} \rangle}}^{+K \cup *}(x), \\
& \qquad \qquad\qquad  \Phi_{{\langle \mathbf{s} \rangle}}^{-K \cup *}(x) + \Phi_{{\langle \mathbf{s} + e_n \rangle}}^{+K \cup *}\proj_{\langle \mathbf{s} \rangle}^+(x), \Phi_{{\langle \mathbf{s} + e_n \rangle}}^{-K \cup *}\proj_{{\langle \mathbf{s} \rangle}}^+(x)   )  \\    
& \in {_1}\C^0_{\langle \mathbf{s} - e_n \rangle} 
\oplus {_1}\C^0_{\langle \mathbf{s} \rangle}
\oplus {_1}\C^0_{\langle \mathbf{s} + e_n \rangle} 
\oplus {_1}\C^1_{\langle \mathbf{s} - e_n\rangle}
\oplus {_1}\C^1_{\langle \mathbf{s}  \rangle}
\oplus {_1}\C^1_{\langle \mathbf{s} + e_n \rangle}
\oplus {_1}\C^1_{\langle \mathbf{s} + 2 e_n \rangle}
.
\end{align*}
Again, we have that $\Phi_{\mathbf{s} \pm e_n}^{\mp K} \circ \proj_{\mathbf{s}}^{\pm K}= 0$.  Studying the inclusion maps $\incl$ shows that if $\incl^{\vec{M'}}(x) = 0$ and $\vec{M'}$ is a compatibly oriented sublink of $\vec{M}$, then $\incl^{\vec{M}}(x) = \incl^{\vec{M}-M'} \circ \incl^{\vec{M'}}(x) = 0$ as well.  Therefore, the maps $\Phi_{\langle \mathbf{s} \pm e_n \rangle}^{\mp K \cup *} \circ \proj_{\langle \mathbf{s} \rangle}^{\pm}$ vanish.  Here, we can apply (\ref{destabinclcommute}) to make the remaining commutations between $\proj^{\pm}$ and $\Phi^*$, since we are comparing inclusions for $K$ and destabilizations for sublinks of $L-K$.  This completes the proof.  
\end{proof}

\subsection{Combinatorial Surgery Exact Triangles}
\begin{proof}[Proof of Corollary~\ref{combinatorialles}]
In the construction of Theorem~\ref{sestheorem}, we only used multiplications by powers of $U$ which depended on the Alexander gradings.  These can be calculated by an explicit formula from a grid diagram (see Section 1 in \cite{hflcombo}).  In the proof, the only additional information we needed was the relation (\ref{destabinclcommute}).  This is  proved in Lemma 7.4 of \cite{hflz} by simply counting more powers of $U$.  These observations combined with Theorem~\ref{combinatorialhf} show that everything (construction and proof) is combinatorial.       
\end{proof}



\section{Framed Floer Homology and Property 2R}\label{property2rsection}
In this section, we make use of the non-invariance of Framed Floer homology to study Property 2R.  Let's again recall the definition of this property.  

\begin{definition}
We will say that a two-component link $L$ in $S^3$ has {\em Property 2R} if whenever $\Lambda$-surgery on $L$ yields $S^2 \times S^1 \# S^2 \times S^1$, the pair $(L,\Lambda)$ can be related to the 0-framed, two-component unlink, $V$, by only handleslides (no stabilizations allowed).
\end{definition} 

A link is {\em algebraically split} if all pairwise linking numbers are 0.  As discussed in the introduction, if a two-component link surgers to $S^2 \times S^1 \# S^2 \times S^1$, then it must be algebraically split and the framing must be identically 0.  

In order to prove Proposition~\ref{property2r}, we must review how to calculate four-dimensional cobordism maps using the link surgery formula.  We will restrict our attention to the setting required for Property 2R.  In particular, given any two-component, algebraically split link, $L$, we construct a four-manifold $X_L$ by attaching 0-framed two-handles to $S^3$ along the components of $L$.  We equip $X_L$ with the Spin$^c$ structure $\mathfrak{t}_L$ which restricts to the unique torsion Spin$^c$ structure $\mathfrak{s}_0$ on $S^3_{\mathbf{0}}(L)$.  

Recall the notation $11_{(0,0)}$ - this is the complex $\widehat{\mathfrak{A}}(\hyperbox^\emptyset,\psi^L((0,0)))$, or in other words, $\widehat{\mathbf{CF}}(S^3)$.  Furthermore, $[11_{(0,0)}]$ is the homology of the vertex complex $11_{(0,0)}$, which is $\widehat{HF}(S^3)$.  Theorem 11.2 of \cite{hflz} tells us that 
\[
\xymatrix{
Cone(\iota: 11_{(0,0)} \hookrightarrow \widehat{\C}(L,\mathbf{0},\mathfrak{s}_0))
}
\]
is quasi-isomorphic to 
\[
Cone(\widehat{f}_{X_L,\mathfrak{t}_L}: \widehat{\mathbf{CF}}(S^3) \longrightarrow \widehat{\mathbf{CF}}(S^3_{\mathbf{0}}(L),\mathfrak{s}_0))
\] 
via the identifications of Theorem~\ref{surgerytheorem}.  Since the rank of the induced map $\widehat{F}_{X_L,\mathfrak{t}_0}$ on homology is a diffeomorphism invariant \cite{hfsmooth4}, we must have that the rank of the induced map $\iota_*:[11_{(0,0)}] \longrightarrow H_*(\widehat{\C}(L,\mathbf{0},\mathfrak{s}_0))$ is an invariant as well.  This is because for any $L$, 
\[
[11_{(0,0)}] \cong \widehat{HF}(S^3) \cong \mathbb{F}.
\]    
 
\begin{remark}
The inclusion of $11_{(0,0)}$ into $\widehat{\C}(L,\mathbf{0},\mathfrak{s}_0)$ is $\varepsilon$-filtered, so we may study the induced maps $\iota_i$ on the $i$th pages of the respective $\varepsilon$-spectral sequences. 
\end{remark}

\begin{proof}[Proof of Proposition~\ref{property2r}]
Suppose that $\widehat{FFH}(L,\mathbf{0},\mathfrak{s}_0)$ is not rank 4 and $L$ has Property 2R.  Consider the four-manifold, $X_L$, obtained by attaching 0-framed two-handles to $S^3$ along the components of $L$.  Since $L$ can be converted to $V$ by only handleslides, $X_L$ and $X_{V}$ are diffeomorphic.  As mentioned above, we must have that the ranks of the respective inclusion maps must agree on homology.  The contradiction is given by the following two lemmas.        
\end{proof}

While the first lemma is a well-known fact, we include it for practice with the link surgery formula and cobordism maps.  It will always be assumed that we are restricting our spectral sequence to the subcomplexes for $\mathfrak{s}_0$.  
\begin{lemma}\label{unlinkisinjective}
The map $\iota_*:[11_{(0,0)}] \longrightarrow H_*(\widehat{\C}(V,\mathbf{0},\mathfrak{s}_0))$ is non-zero.  
\end{lemma}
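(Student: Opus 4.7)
The plan is to reduce to the case of a single $0$-framed unknot via the K\"unneth formula and then verify the base case directly.

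For the reduction, I would use the chain-level tensor decomposition underlying the proof of Proposition~\ref{kunnethformula}: choosing compatible complete systems as in Lemma~\ref{vertexkunneth}, the surgery complex $\widehat{\C}(V,\mathbf{0},\mathfrak{s}_0)$ is identified with $\widehat{\C}(U_1,0,\mathfrak{s}_0) \otimes_{\mathbb{F}} \widehat{\C}(U_2,0,\mathfrak{s}_0)$, where $U_1, U_2$ are the two unknotted components of $V$. Under this identification the corner vertex $11_{(0,0)}$ corresponds to $1_0 \otimes 1_0$, and the subcomplex inclusion $\iota$ corresponds to $\iota_{U_1} \otimes \iota_{U_2}$. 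Passing to homology over the field $\mathbb{F}$, we get $(\iota_V)_* = (\iota_{U_1})_* \otimes (\iota_{U_2})_*$, so it suffices to show $(\iota_U)_*$ is non-zero for the single $0$-framed unknot.

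For the base case, note that $\widehat{\C}(U,0,\mathfrak{s}_0)$ is the two-term complex whose $(E_1,d_1)$-page has the shape $[0_0] \xrightarrow{\Psi^U_0} [1_0]$, with $[0_0] \cong [1_0] \cong \mathbb{F}$ by Theorem~\ref{thincalculation} applied to $\Delta_U(T) = 1$. Since Theorem~\ref{surgerytheorem} identifies the total homology with $\widehat{HF}(S^2 \times S^1, \mathfrak{s}_0) \cong \mathbb{F}^2$, Remark~\ref{calculatepsi} forces $\Psi^U_0 = 0$. Because $1_0$ is a subcomplex of $\widehat{\C}(U,0,\mathfrak{s}_0)$ (no $D^{\varepsilon}$-differential leaves the terminal corner), the associated long exact sequence on homology has vanishing connecting homomorphism, so $(\iota_U)_*$ is an injection $\mathbb{F} \hookrightarrow \mathbb{F}^2$, hence non-zero.

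The main obstacle is the K\"unneth compatibility in the first step: Proposition~\ref{kunnethformula} is phrased at the level of $\widehat{FFH}$ (the $E_2$ page), whereas the statement requires the identification at the level of $H_*(\widehat{\C})$ together with a matching description of the corner inclusion. Tracking through the proof of Proposition~\ref{kunnethformula}, the chain isomorphism of Lemma~\ref{vertexkunneth} visibly carries corner vertices to tensor products of corner vertices, and the spectral-sequence argument used there upgrades to $H_*(\widehat{\C})$ because everything is defined over the field $\mathbb{F}$. If this compatibility turns out to be subtle, a fallback is to invoke Theorem 11.2 of \cite{hflz} to identify $(\iota_V)_*$ with the standard 2-handle cobordism map $\widehat{F}_{X_V,\mathfrak{t}_V}$ and apply the K\"unneth formula for cobordism maps, again reducing to the single-unknot calculation above.
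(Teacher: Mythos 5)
Your base case for the single $0$-framed unknot is correct (rank counts from Theorem~\ref{thincalculation}, $\Psi^U_0=0$ by Remark~\ref{calculatepsi}, and the vanishing connecting homomorphism in the mapping-cone long exact sequence), but the reduction step, as written, claims more than Proposition~\ref{kunnethformula} provides, and this is a genuine gap. The proof of that proposition identifies $\widehat{\C}(\hyperbox_1,\Lambda_1,\mathfrak{s}_1)\otimes_{\mathbb{F}}\widehat{\C}(\hyperbox_2,\Lambda_2,\mathfrak{s}_2)$ with $\widehat{\C}(\hyperbox_{1,2},\Lambda,\mathfrak{s}_1\#\mathfrak{s}_2)$ only at the level of $(E_0,d_0)$, explicitly notes that this identification ``does not a priori commute with $d_1$,'' and then, after special choices of complete system, upgrades it only to the $(E_1,d_1)$ pages. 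Nothing there controls the $\varepsilon$-length-two components $\Phi^{\pm U_1\cup\pm U_2}$ of the total differential $D$, which are invisible on $E_1$ but present in $\widehat{\C}(V,\mathbf{0},\mathfrak{s}_0)$. Consequently the asserted chain-level tensor decomposition of the total complex, and with it the factorization $(\iota_V)_*=(\iota_{U_1})_*\otimes(\iota_{U_2})_*$ on $H_*(\widehat{\C})$, is not justified by the cited results; working over $\mathbb{F}$ does not help, since the issue is the splitting of the differential, not the coefficients.

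Your fallback route is sound but genuinely different from the paper: invoking Theorem 11.2 of \cite{hflz} identifies $\rk\iota_*$ with the rank of $\widehat{F}_{X_V,\mathfrak{t}_V}$, and the composition/K\"unneth properties of cobordism maps reduce this to the standard fact that the $2$-handle map $\widehat{HF}(S^3)\to\widehat{HF}(S^2\times S^1,\mathfrak{s}_0)$ for a $0$-framed unknot is injective; this works, but it imports external Heegaard Floer computations. The paper instead argues entirely inside the surgery formula: by Theorem~\ref{largesurgeries} each vertex homology $[\varepsilon_{(0,0)}]$ for the unlink has rank $1$ (large surgeries on sublinks of $V$ are $S^3$, lens spaces, or connected sums of lens spaces), so $E_1$ has rank $4=\rk\widehat{HF}(S^2\times S^1\#S^2\times S^1,\mathfrak{s}_0)$, the $\varepsilon$-spectral sequence degenerates at $E_1$, and Fact~\ref{filteredisoisiso} converts the evidently nonzero $\iota_1$ into a nonzero $\iota_*$. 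If you want to keep the K\"unneth flavor, use it where it is actually established, on the $(E_1,d_1)$ page: your unknot computation shows $d_1=0$ for each factor, which is precisely why $E_1$ for $V$ already has rank $4$, and from there the degeneration argument above finishes the proof.
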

\begin{proof}
We proceed page by page through the $\varepsilon$-spectral sequence using prior knowledge.  The first page is 
\[
E_1(\widehat{C}(V,\mathbf{0},\mathfrak{s}_0))  = [00_{(0,0)}] \oplus [01_{(0,0)}] \oplus [10_{(0,0)}] \oplus [11_{(0,0)}].
\]
We may calculate all of these terms simply by appealing to Theorem~\ref{largesurgeries}.  This says that the $[{\varepsilon_1 \varepsilon_2}_{\mathbf{s}}]$ are given by the Heegaard Floer homologies of large surgeries on various sublinks of $V$ in certain Spin$^c$ structures.  However, any non-zero surgery on a sublink of $V$ is $S^3$, a lens space, or a connect-sum of lens spaces.  All of these manifolds will have rank 1 for their Heegaard Floer homologies in each Spin$^c$ structure.  Therefore, the rank of the $E_1$ page in the torsion Spin$^c$ structure is 4.  However, this spectral sequence is converging to the Heegaard Floer homology of $S^2 \times S^1 \# S^2 \times S^1$, which itself has rank 4.  Therefore, all higher differentials must vanish and $E_1(\widehat{C}(V,\mathbf{0},\mathfrak{s}_0)) \cong E_\infty(\widehat{C}(V,\mathbf{0},\mathfrak{s}_0))$.  

We can also consider the $\varepsilon$-spectral sequence on the 0-dimensional hypercube of chain complexes $11_{(0,0)}$.  Clearly, we have that $E_1 \cong E_\infty \cong [11_{(0,0)}]$.  Therefore, the rank of the induced map $\iota_\infty: E_\infty(11_{(0,0)}) \longrightarrow E_\infty(\widehat{\C}(V,\mathbf{0},\mathfrak{s}_0))$ is equal to the rank of $\iota_1$.  By Fact~\ref{filteredisoisiso}, $\iota_*$ will be non-zero if we show that $\iota_1$ is non-zero.  However, this is now clear as the map $\iota_1$ is the obvious inclusion of $[11_{(0,0)}]$ into $[00_{(0,0)}]  \oplus [01_{(0,0)}] \oplus [10_{(0,0)}] \oplus [11_{(0,0)}]$.   
\end{proof}

\begin{lemma}\label{linkis0}
If the rank of $\widehat{FFH}(L,\mathbf{0},\mathfrak{s}_0)$ is not 4, then the inclusion $\iota_*:[11_{(0,0)}] \longrightarrow H_*(\widehat{\C}(L,\mathbf{0},\mathfrak{s}_0))$, is identically 0.
\end{lemma}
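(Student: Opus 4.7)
The plan is to run the $\varepsilon$-spectral sequence for the subcomplex $\widehat{\C}(L, \mathbf{0}, \mathfrak{s}_0)$ and track the inclusion $\iota$ through it. Since this hypercube has dimension $2$, the depth of the $\varepsilon$-filtration is $2$ and only $d_0, d_1, d_2$ are potentially nonzero. By construction, $E_2 = \widehat{FFH}(L, \mathbf{0}, \mathfrak{s}_0)$, and Theorem~\ref{surgerytheorem} gives convergence to $H_*(\widehat{\C}(L, \mathbf{0}, \mathfrak{s}_0)) \cong \widehat{HF}(S^2 \times S^1 \# S^2 \times S^1, \mathfrak{s}_0) \cong \mathbb{F}^4$.

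The first substantive step is to identify the top-filtration corner $E^0_1$. Because $\Lambda = \mathbf{0}$ we have $H(L, \Lambda) = 0$, so each class in $\mathbb{H}(L)/H(L, \Lambda) \cong \text{Spin}^c(Y_\Lambda(L))$ has a unique representative; in particular $\mathfrak{s}_0$ corresponds to $\mathbf{s} = (0,0)$ alone. Using the formula for $E^j_1$ from Section~\ref{filteredsection}, only a single summand survives, yielding $E^0_1 = [11_{(0,0)}] \cong \widehat{HF}(S^3) \cong \mathbb{F}$. Consequently, $E^0_r$ is a subquotient of $\mathbb{F}$ for every $r$, and hence is either zero or one-dimensional.

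Next, I constrain $d_2$ by a rank count. Because each $E_{r+1}$ is a subquotient of $E_r$ over the field $\mathbb{F}$, the rank of $E_2$ always dominates $\rk E_\infty = 4$, so the hypothesis $\rk \widehat{FFH}(L, \mathbf{0}, \mathfrak{s}_0) \neq 4$ forces this rank to be strictly greater than $4$. Since $d_2 : E^2_2 \to E^0_2$ is the only remaining differential between $E_2$ and $E_\infty = E_3$, it must have nonzero rank. However $E^0_2$ has rank at most $1$ by the previous step, so a nontrivial $d_2$ is forced to surject onto $E^0_2$, giving $E^0_\infty = E^0_2 / \im(d_2) = 0$.

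Finally, the inclusion $\iota : 11_{(0,0)} \hookrightarrow \widehat{\C}(L, \mathbf{0}, \mathfrak{s}_0)$ lands in filtration level $0$, so it is $\varepsilon$-filtered and induces maps $\iota_r : [11_{(0,0)}] \to E^0_r$ on every page of the spectral sequence. From $E^0_\infty = 0$ we immediately conclude $\iota_\infty = 0$, and Fact~\ref{filteredisoisiso} then yields $\iota_* = 0$. The entire argument is essentially a rank count in a two-dimensional spectral sequence; the only delicate point is the Spin$^c$ bookkeeping which pins $E^0_1$ down to a single summand of rank one, and this is exactly where the hypothesis $\Lambda = \mathbf{0}$ is essential.
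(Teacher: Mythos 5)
Your proof is correct and follows essentially the same route as the paper: identify $E^0_1 = [11_{(0,0)}] \cong \mathbb{F}$, use the hypothesis on rank to force a nontrivial $d_2$ which must surject onto $E^0_2$, conclude $E^0_\infty = 0$, and then use the $\varepsilon$-filteredness of $\iota$ together with Fact~\ref{filteredisoisiso} to get $\iota_* = 0$. The extra details you supply (the Spin$^c$ bookkeeping for $\Lambda = \mathbf{0}$ and the field-coefficient rank comparison $\rk E_2 \geq \rk E_\infty$) only make explicit what the paper leaves implicit.
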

\begin{proof}
By assumption, we have that the Framed Floer homology is not isomorphic to the Heegaard Floer homology of $S^2 \times S^1 \# S^2 \times S^1$.  This tells us that there must be higher differentials in the $\varepsilon$-spectral sequence.  Since the depth of the $\varepsilon$-filtration for $L$ is only two, we must have a non-trivial $d_2$ differential.  If this happens, this means that 
\[
d_2:E^{2}_2(\widehat{C}(L,\mathbf{0},\mathfrak{s}_0)) \longrightarrow E^0_2(\widehat{C}(L,\mathbf{0},\mathfrak{s}_0)) 
\]
is non-zero (remember that $E^j_i$ refers to terms in the $j$th filtration level).  Note that $E^0_2(\widehat{C}(L,\mathbf{0},\mathfrak{s}_0))$ has rank at most 1, as $E^0_1(\widehat{C}(L,\mathbf{0},\mathfrak{s}_0)) = [11_{(0,0)}]$.  Thus, $d_2$ is surjective, and $E^0_3(\widehat{C}(L,\mathbf{0},\mathfrak{s}_0)) \cong E^0_\infty(\widehat{C}(L,\mathbf{0},\mathfrak{s}_0))$ is 0.  However, as the $E_3$ page of the $\varepsilon$-spectral sequence for the 0-dimensional hypercube of chain complexes $(11_{(0,0)},\partial)$ is $[11_{(0,0)}]$ (and again also the $E_\infty$ page), we must have that $\iota_3 = \iota_\infty = 0$.  Therefore, $\iota_*$ is 0 by Fact~\ref{filteredisoisiso}.       
\end{proof}

An argument similar to that in Remark~\ref{not4is6} implies that if the rank of $\widehat{FFH}(L,\Lambda,\mathfrak{s})$ is not 4, it must be 6.  

If Framed Floer homology was an invariant of $S^3_\Lambda(L)$, then we could never have the setting described in Proposition~\ref{property2r}, since the Framed Floer homology for the 0-framed two-component unlink has rank 4.  Therefore, the non-invariance of Framed Floer homology is what allows us the possibility to distinguish different surgery presentations.  

However, it is not true in general that the Framed Floer homology is an invariant of the four-manifold attained by attaching two-handles along $L$ according to $\Lambda$.  In fact, the framed links $(L_1,\mathbf{0})$ and $(L_2,\mathbf{0})$ in Theorem~\ref{noninvariancetheorem} yield diffeomorphic four-manifolds.  The difference is that in this case the cobordism map on Heegaard Floer homology in the torsion Spin$^c$ structure is trivial.  For a similar reason, Framed Floer homology will not be useful for Property 2R in the non-torsion Spin$^c$ structures.  

More generally, it would be interesting to bring Heegaard Floer homology to the table to study this problem.  For example, if an $n$-component link surgers to $\#_n S^2 \times S^1$, then the link bounds smooth disks in a homotopy 4-ball (see Theorem 2 of \cite{hillmanideals} for the topological case or Proposition 2.3 of \cite{gompfscharlemannthompson} for the smooth case).  Therefore, by Theorem 1.1 of \cite{hfkfourballgenus}, the knot Floer homology concordance invariant $\tau$ must be zero for each component of the link.  On the other hand, if an $n$-component link $L$ which surgers to $\#_n S^2 \times S^1$ can be related to the unlink by handleslides only, then there are additional constraints on the knot Floer complexes of each component.  In particular, for each component $K$, the map $\Psi^K_0 = (\Phi_0^{+K})_* + (\Phi_0^{-K})_*$ must be identically zero on $H_*(\widehat{A}(K,0))$.  This is because the component of the $d_1$ differential into $E^0_\infty(\widehat{C}(L,\mathbf{0}))$ must vanish in order for the cobordism map induced by the two-handle attachments to be non-zero (this is analogous to the proof of Lemma~\ref{unlinkisinjective}).

\section{Future Directions for $\widehat{FFH}$}\label{futuresection}

It is natural to expect there to be some relation between the reduced Khovanov homology of a link $L$ and the Framed Floer homology of some appropriately chosen surgery presentation of $\Sigma(\overline{L})$.  The most obvious choice would be the framed link determined by a one-circle resolution (see Figure 19 in \cite{jbloomsurgery} for an illustration of the construction); let's denote the result by $(\widetilde{L},\widetilde{\Lambda})$.  Ideally, one would hope that $\widehat{FFH}(\widetilde{L},\widetilde{\Lambda})$ and $\widetilde{Kh}(L)$ are in fact isomorphic.  This would give some new insight into how Khovanov homology and Heegaard Floer homology are related.  A similar idea would be to attempt to recreate the argument of Oszv\'ath and Szab\'o in \cite{hfbranched}, but incorporating the ideas from Section~\ref{surgeryexactsection} to give a spectral sequence from $\widetilde{Kh}(L)$ to $\widehat{HF}(\Sigma_2(\bar{L}))$ which could also be computed combinatorially.  Theorem~\ref{sestheorem} would have to be extended to knots that are not necessarily nullhomologous.

As of now, there is not a clear understanding of what Framed Floer homology actually represents.  It would be interesting to see a definition which did not require the surgery formula or hypercubes of chain complexes.  It is known that Heegaard Floer homology can detect many intrinsic topological properties, such as the Thurston norm \cite{hfgenus}; it would be interesting to see if $\widehat{FFH}$ can detect specific structures, especially in light of Proposition~\ref{nonmirrortheorem}.  As in the work of Baldwin \cite{higherdiffs}, one should also try to study the subsequent pages and higher differentials in the $\varepsilon$-spectral sequence.  For example, the $d_3$ differential on the $\infty$ flavor contains information about the cup product structure of the surgered manifold \cite{hftriple}.   

Another direction would be to explore further the natural cobordism maps arising in Framed Floer homology as discussed in Section~\ref{property2rsection}.  

Finally, we discuss the computational advantages of Framed Floer homology.  Suppose $Y$ is presented by surgery on an $n$-component link.  To compute Heegaard Floer homology from the link surgery formula, one must make many holomorphic polygon counts, from bigons up to $(n+2)$-gons.  However, Framed Floer homology can be calculated with only bigons and triangles, making it significantly less involved.  This avoids many of the difficulties that the full theory of the surgery formula faces, especially when dealing with the combinatorial setting.  One would therefore like to see it used in an application where the Heegaard Floer homology is too difficult to explicitly calculate using the link surgery formula because of the higher polygon maps.

\bibliographystyle{plain}

\bibliography{biblio}

\begin{thebibliography}{10}

\bibitem{higherdiffs}
J.~Baldwin.
\newblock On the spectral sequence from {K}hovanov homology to {H}eegaard
  {F}loer homology.
\newblock {\em Int. Math. Res. Not.}, 2011:3426--3470, 2011.

\bibitem{drorkhovanov}
D.~Bar-Natan.
\newblock On {K}hovanov's categorification of the {J}ones polynomial.
\newblock {\em Algebr. Geom. Topol.}, 2:337--370, 2002.

\bibitem{jbloomsurgery}
J.M. Bloom.
\newblock A link surgery spectral sequence in monopole {F}loer homology.
\newblock {\em Adv. Math.}, 226(4):3216--3281, 2011.

\bibitem{FennRourke}
R.~Fenn and C.~Rourke.
\newblock On {K}irby's calculus of links.
\newblock {\em Topology}, 18(1):1--15, 1979.

\bibitem{gabaifoliations3}
D.~Gabai.
\newblock Foliations and the topology of {$3$}-manifolds. {III}.
\newblock {\em J. Differential Geom.}, 26(3):479--536, 1987.

\bibitem{gompfscharlemannthompson}
R.~Gompf, M.~Scharlemann, and A.~Thompson.
\newblock Fibered knots and potential counterexamples to the {P}roperty 2{R}
  and slice-ribbon conjectures.
\newblock {\em Geom. Topol.}, 14(4):2305--2347, 2010.

\bibitem{hillmanideals}
J.A. Hillman.
\newblock {\em Alexander ideals of links}, volume 895 of {\em Lecture Notes in
  Mathematics}.
\newblock Springer-Verlag, Berlin, 1981.

\bibitem{jhomlspace}
J.~Hom.
\newblock A note on cabling and {L}-space surgeries.
\newblock {\em Alg. Geom. Top.}, 11(1):219--223, 2011.

\bibitem{hftriple}
T.~Lidman.
\newblock Heegaard {F}loer homology and triple cup products, 2010.
\newblock Preprint, arXiv:1011.4277.

\bibitem{hflz}
C.~Manolescu and P.S. Ozsv\'ath.
\newblock Heegaard {F}loer homology and integer surgeries on links, 2010.
\newblock Preprint, arXiv:1011.1317.

\bibitem{hflcombo}
C.~Manolescu, P.S. Ozsv\'ath, Z.~Szab\'o, and D.~Thurston.
\newblock On combinatorial link {F}loer homology.
\newblock {\em Geom. Topol.}, 11:2339--2412, 2007.

\bibitem{hflzcombo}
C.~Manolescu, P.S. Ozsv\'ath, and D.~Thurston.
\newblock Grid diagrams and {H}eegard {F}loer invariants, 2009.
\newblock Preprint, arXiv:0910.0078.

\bibitem{usersguide}
J.~McCleary.
\newblock {\em A user's guide to spectral sequences}, volume~58 of {\em
  Cambridge Studies in Advanced Mathematics}.
\newblock Cambridge University Press, Cambridge, second edition, 2001.

\bibitem{absgraded}
P.S. Ozsv\'ath and Z.~Szab\'o.
\newblock Absolutely graded {F}loer homologies and intersection forms for
  four-manifolds with boundary.
\newblock {\em Adv. Math.}, 173(2):179--261, 2003.

\bibitem{hfalternating}
P.S. Ozsv\'ath and Z.~Szab\'o.
\newblock Heegaard {F}loer homology and alternating knots.
\newblock {\em Geom. Topol.}, 7:225--254, 2003.

\bibitem{hfkfourballgenus}
P.S. Ozsv\'ath and Z.~Szab\'o.
\newblock Knot {F}loer homology and the four-ball genus.
\newblock {\em Geom. Topol.}, 7:615--639, 2003.

\bibitem{hfgenus}
P.S. Ozsv\'ath and Z.~Szab\'o.
\newblock Holomorphic disks and genus bounds.
\newblock {\em Geom. Topol.}, 8:311--334, 2004.

\bibitem{hfk}
P.S. Ozsv\'ath and Z.~Szab\'o.
\newblock Holomorphic disks and knot invariants.
\newblock {\em Adv. Math.}, 186(1):58--116, 2004.

\bibitem{hfpa}
P.S. Ozsv\'ath and Z.~Szab\'o.
\newblock Holomorphic disks and three-manifold invariants: Properties and
  applications.
\newblock {\em Ann. of Math. (2)}, 159(3):1159--1245, 2004.

\bibitem{hfinvariance}
P.S. Ozsv\'ath and Z.~Szab\'o.
\newblock Holomorphic disks and topological invariants for closed
  three-manifolds.
\newblock {\em Ann. of Math. (2)}, 159(3):1027--1158, 2004.

\bibitem{hfbranched}
P.S. Ozsv\'ath and Z.~Szab\'o.
\newblock On the {H}eegaard {F}loer homology of branched double-covers.
\newblock {\em Adv. Math.}, 194(1):1--33, 2005.

\bibitem{hfsmooth4}
P.S. Ozsv\'ath and Z.~Szab\'o.
\newblock Holomorphic triangles and invariants for smooth four-manifolds.
\newblock {\em Adv. Math.}, 202(2):326--400, 2006.

\bibitem{hfl}
P.S. Ozsv\'ath and Z.~Szab\'o.
\newblock Holomorphic disks, link invariants and the multi-variable {A}lexander
  polynomial.
\newblock {\em Algebr. Geom. Topol.}, 8(2):615--692, 2008.

\bibitem{hfkz}
P.S. Ozsv\'ath and Z.~Szab\'o.
\newblock Knot {F}loer homology and integer surgeries.
\newblock {\em Algebr. Geom. Topol.}, 8(1):101--153, 2008.

\bibitem{liambranched}
L.~Watson.
\newblock A remark on {K}hovanov homology and two-fold branched covers.
\newblock {\em Pacific J. Math.}, 245(2):373--380, 2010.

\end{thebibliography}

\end{document}